\documentclass[letterpaper, 10 pt, conference]{ieeeconf}
\newcommand{\dist}{\text{dist}}
\IEEEoverridecommandlockouts
\usepackage{todonotes}
\usepackage{cite}
\usepackage{amsmath,amssymb,amsfonts}
\usepackage{algorithmic}
\usepackage{graphicx}
\usepackage{algorithm,algorithmic}
\usepackage{hyperref}
\hypersetup{hidelinks=true}
\usepackage{textcomp}
\newtheorem{lemma}{Lemma}
\newtheorem{definition}{Definition}
\newtheorem{theorem}{Theorem}
\newtheorem{proposition}{Proposition}
\newtheorem{assumption}{Assumption}
\newtheorem{remark}{Remark}

\newtheorem{corollary}{Corollary}
\usepackage{xcolor}
\usepackage[T1]{fontenc}
\usepackage{setspace}
\newcommand{\PLI}{P\L{}I}
\newcommand{\gPLI}{\textit{g}\PLI}

\newcommand{\kPLI}{$\mathcal{K}$-\PLI}
\newcommand{\kinfPLI}{$\mathcal{K}_\infty$-\PLI}
\newcommand{\kcPLI}{$\mathcal{K}_c$-\PLI}

\newcommand{\re}[1][\empty]{\mathbb{R}^{#1}}
\newcommand{\loss}{f}
\newcommand{\mloss}{\loss^{\ast}}

\usepackage[normalem]{ulem} 

\usepackage{cuted} 
\def\BibTeX{{\rm B\kern-.05em{\sc i\kern-.025em b}\kern-.08em
    T\kern-.1667em\lower.7ex\hbox{E}\kern-.125emX}}
\begin{document}
\title{\LARGE \bf Incremental Stability of Gradient Flows under Generalized Polyak--\L{}ojasiewicz Inequalities and Its Implications for General Systems}

\author{Andreas Oliveira, \IEEEmembership{Member, IEEE}, Arthur C. B. de Oliveira, \IEEEmembership{Member, IEEE}, \\ Mario Sznaier, \IEEEmembership{Fellow, IEEE} and Eduardo Sontag, \IEEEmembership{Fellow, IEEE}
\thanks{This work was partially supported by NSF grant CMMI 2208182, AFOSR grant FA9550-19-1-0005,  ONR grant N00014-21-1-2431 and NCCR Automation grant 51NF40\_225155 from the Swiss National Science Foundation. The authors are with the ECE Department at Northeastern University, Boston, MA 02115. \textit{(Corresponding author: Andreas Oliveira, e-mail: franciscodemelooli.a@northeastern.edu).}}}

\maketitle

\begin{abstract}
Gradient flows of twice continuously differentiable strongly convex and convex objectives are, respectively, incrementally exponentially stable and non-expansive in the Euclidean metric. In this work, we investigate to what extent these properties extend to nonconvex objectives satisfying generalized Polyak--\L{}ojasiewicz inequalities. For objectives with a unique minimizer, we establish conditions under which the gradient flow is contracting in a state-dependent Riemannian metric and incrementally exponentially stable. For objectives with nonunique minimizers, we generalize the non-expansivity of convex objectives to incremental stability for more general nonconvex objectives. We focus on incremental stability due to its implication to convergence to equilibrium points; when paired with a Lyapunov function it implies pointwise convergence to a minimizer and continuity of the endpoint map. Along the way, we derive new structural results for incrementally stable nonlinear systems and extend the analysis to Heavy-Ball and Dynamical Inertial Newton flows.
\end{abstract}

\begin{keywords}
gradient flows, \L{}ojasiewicz inequality, contractivity
\end{keywords}

\section{Introduction}\label{sec:introduction}

Continuous-time dynamical systems provide a useful framework for studying the qualitative behavior of optimization algorithms. The starting point for this approach is the gradient flow
\begin{equation}
\dot{x}=-\nabla f(x),
\end{equation}
for which the objective $f(x)$ is a natural Lyapunov function:
\begin{equation}
\frac{d}{dt}f(x(t))=-\|\nabla f(x(t))\|^2.
\end{equation}
Consequently, conditions relating the gradient norm to the optimality gap (i.e., $f(x) - f^{\ast}$) directly yield convergence guarantees for $f(x(t))-f^\ast$. Such guarantees, however, need not provide information about convergence of the state $x(t)$ to a particular minimizer of $f$. In particular, convergence of the objective value, convergence of the state to the minimizing set, and convergence of the state to a single minimizer are distinct properties. The same distinction arises for inertial optimization dynamics widely employed in descent methods to improve convergence rates, including the Heavy-Ball (HB) system \cite{polyak,attouch2000heavy} and the Dynamical Inertial Newton (DIN) system \cite{alvarez2002second}. Understanding when objective-value guarantees can be strengthened to state-space guarantees is the motivation of this work. 

Our main inspiration for the above questions comes from two important properties of convex objectives which make the situation easier to handle. The gradient flow of a convex $C^2$ function is non-expansive in the Euclidean norm (c.f. Remark \ref{rm:inf-contracting-convex})
\begin{equation}\label{eq:intro-non-exp}
\|x(t)-y(t)\| \leq \|x(0)-y(0)\|,
\end{equation}
while $m$-strong convexity gives incremental exponential stability (IES) with overshoot one:
\begin{equation}\label{eq:intro-IES}
\|x(t)-y(t)\|
\leq
e^{-mt}\|x(0)-y(0)\|.
\end{equation}
We prove generalizations of the two to nonconvex objectives with the goal of providing more tools to prove the convergence guarantees mentioned in the previous paragraph, thereby linking the study of optimization algorithms to stability concepts more prevalent in control theory akin to \cite{lessard2016analysis, fazlyab2018analysis, davydov2025time, Wensing_2020}.

A central relaxation of convexity is the Polyak--\L{}ojasiewicz inequality (\PLI), which requires, for some $\xi>0$,
\begin{equation}\label{eq:intro-PLI}
\|\nabla f(x)\|
\geq
\sqrt{\xi\big(f(x)-f^\ast\big)}.
\end{equation}
The \PLI\ does not require convexity, yet it preserves global optimality of stationary points and exponential convergence of the objective along gradient descent and gradient-flow trajectories \cite{karimi2016linear}. This property has proved useful in a broad range of nonconvex problems, including policy optimization for the linear-quadratic regulator (LQR) \cite{fazel2018lqr}, over-parameterized learning and neural networks \cite{liu2022loss}, low-rank matrix recovery \cite{bi2022local}, and nonconvex imaging problems \cite{farrell2022distributed}. It has also been used to study stability and generalization properties of learning algorithms beyond the strongly convex setting \cite{charles2018stability}.

The \PLI\ prescribes a particularly strict relationship between the gradient norm and the optimality gap. More general \L{}ojasiewicz-type conditions replace this square-root dependence by
\begin{equation}\label{eq:intro-generalized-pli}
\|\nabla f(x)\|
\geq
\alpha\big(f(x)-f^\ast\big),
\end{equation}
for a suitable positive-definite comparison function $\alpha$. Such generalizations arise naturally when a uniform global \PLI\ is too restrictive \cite{Arthur-PLI,cui2024small,cui2025small}. Of particular relevance to control, generalized gradient-dominance conditions have been motivated by continuous-time LQR policy optimization, where the available $\alpha(\cdot)$ that certifies \eqref{eq:intro-generalized-pli} is not of the square-root form in \eqref{eq:intro-PLI} and hence weaker than a global \PLI. 

Similarly to equation \eqref{eq:intro-PLI}, we have that \eqref{eq:intro-generalized-pli} directly constrains the evolution of the objective along the gradient flow through the same Lyapunov function argument. However, it does not, by itself, provide a certificate for the convergence, or rate of convergence, of the state to a stationary point, something that we can precisely infer for convex objectives from \eqref{eq:intro-IES} and \eqref{eq:intro-non-exp}. This motivates the main question of the paper: \emph{when do gradient flows satisfying generalized \L{}ojasiewicz inequalities possess incremental stability properties?} In the unique-minimizer setting, we first ask whether the contraction property enjoyed by strongly convex gradient flows can be extended to generalized \PLI\ objectives. In the nonunique-minimizer setting, strict contraction or IES are no longer possible. Motivated instead by the non-expansiveness of convex gradient flows, we ask whether an analogous weaker property can be recovered and study incremental stability as its natural generalization.

Contraction theory provides a natural tool for studying the unique-minimizer question, since strict contraction gives a sufficient certificate for IES. Classical logarithmic-norm criteria have been applied to nonlinear systems, chemical reaction networks, reaction-diffusion equations, robustness to inputs and time-varying disturbances, and entrainment \cite{contractive-with-inputs-sontag,Sontag-Aminzare,russo2010entrainment,oliveira2026time}. More general differential approaches based on Riemannian metrics \cite{LOHMILLER1998683} and Finsler Lyapunov functions \cite{forni2013differentiallyapunovframeworkcontraction} have found applications in observer design \cite{aghannan2003intrinsic}, nonlinear feedback synthesis \cite{manchester2017control}, geodesic optimization \cite{Wensing_2020}, learning and generalization \cite{kozachkov2022generalizationsupervisedlearningriemannian}, model-based and data-driven control \cite{manchester2017control, oliveira2025DDC}. For gradient flows, logarithmic-norm contraction and non-expansiveness are equivalent to strong convexity and convexity, respectively \cite{fitzsimmons2022note}; therefore most of our results for nonconvex generalized-\PLI\ objectives with unique minimizers will rely solely on Riemannian metrics. With nonunique minimizers, strict contraction and IES are precluded by the presence of distinct equilibria and we focus instead on incremental stability and its structural consequences for gradient flows and general nonlinear systems.

The main contributions of this paper are as follows:

\begin{enumerate}

\item We first slightly generalize our previous results presented in \cite{oliveira2026incremental}. We show that for a gradient flow under a generalized \kcPLI\ condition and local strong convexity of the unique global minimizer, we can construct a global Riemannian contraction metric whose induced distance is proper and induces the Euclidean topology, although it need not be uniformly equivalent to the Euclidean distance. Thus, the contraction property of strongly convex gradient flows extends to a broader class of nonconvex objectives under a state-dependent metric. 

\item We then identify two sufficient regimes under which this metric is uniformly equivalent to the Euclidean metric: one where the region of insufficient convexity is bounded, and another combining a global lower curvature bound with sufficient growth of the generalized gradient-dominance function. In either case, the gradient flow is globally incrementally exponentially stable in the Euclidean norm, generally with a nonunit overshoot. Compared to \cite{oliveira2026incremental} we also provide a new nonconvex example that satisfies the assumptions of the global lower curvature theorem.

\item We develop a novel complementary logarithmic-norm criterion for IES. In particular, we show that a globally asymptotically stable system that is infinitesimally contracting everywhere outside a compact set not containing the equilibrium is globally incrementally exponentially stable.

\item We study the structural consequences of incremental stability for general nonlinear systems. On a connected domain, incremental stability upgrades local asymptotic stability of a set to global asymptotic stability. For the equilibrium set, we characterize the existence of a proper compatible metric giving both incremental stability and local asymptotic stability by convergence of every trajectory to a Lyapunov-stable equilibrium, that is, global semistability \cite{bhat2003nontangency,bhat2010arclength}. The link between incremental stability and global semistability yields an alternative proof of the continuity of the endpoint map established for semistable systems in \cite{bhat2010arclength}. Our incremental formulation also provides a modulus of continuity in the chosen metric and a non-expansive metric constructed from the flow. The equilibrium set is consequently connected, and path-connected when the domain is path-connected.

\item We apply these structural results to gradient systems with nonunique minimizers. We show that gradient flows generated by classical \PLI\ objectives are incrementally stable. We also identify a new class of generalized-\PLI\ objectives whose gradient flows are also incrementally stable and which is neither weaker nor stronger than the classical \PLI\ class. This yields pointwise state convergence, continuity of the endpoint map, and path-connectedness of the minimizer set without requiring strict contraction or IES.

\item Finally, we extend the incremental-stability viewpoint beyond first-order gradient flows to inertial optimization dynamics. For convex objectives under the stated regularity assumptions (and, for DIN, also for classical \PLI\ objectives), we show that both the Heavy-Ball (HB) and Dynamical Inertial Newton (DIN) flows are incrementally stable. Combined with the general results above, this provides an alternative proof for continuity of their endpoint maps.

\end{enumerate}

\section{Preliminaries}

\subsection{Stability Notions}
Consider the nonlinear ordinary differential equation (ODE):
\begin{equation}\label{eq:ode}
    \dot{x} = g(x),
\end{equation}
where $g:\mathbb{R}^n \to \mathbb{R}^n$ is continuously differentiable. Throughout, $\varphi(x,t)$ denotes the solution of \eqref{eq:ode} at time $t$ from the initial condition $x$. We utilize the definitions of (global) exponential stability as given in \cite{khalil2002nonlinear}. As in \cite{khalil2002nonlinear} we utilize the notions of class-$\mathcal{K}$ and class-$\mathcal{K}_{\infty}$ functions:

\begin{definition}
A scalar continuous function $\alpha(r)$, defined for $r \in \mathbb{R}_{\ge 0}$, is said to belong to class $\mathcal{K}$ if it is strictly increasing and $\alpha(0)=0$. It is further said to belong to class $\mathcal{K}_{\infty}$ if $\alpha(r)\to\infty$ as $r\to\infty$.
\end{definition}

Following \cite{forni2013differentiallyapunovframeworkcontraction,angeli2002incremental} we define incrementally stable systems. We allow incremental stability to hold with respect to any proper metric $d$ (i.e. its closed balls are compact in the Euclidean topology) which induces the same topology as the Euclidean metric; we call such a metric \emph{compatible}.

\begin{definition}
System \eqref{eq:ode} is said to be \emph{incrementally exponentially stable} (IES) in a forward invariant set $G \subset \mathbb{R}^n$ w.r.t. a metric $d$ if there exists a rate $\kappa > 0$ and overshoot $c>0$ such that for any two initial conditions $x_0,y_0 \in G$ the corresponding solutions $\varphi(x_0,t)$, $\varphi(y_0,t)$ of \eqref{eq:ode} satisfy:
\begin{equation}
\label{eq:ies}
    d(\varphi(x_0,t),\varphi(y_0,t))
    \;\le\; c\,e^{-\kappa t}\,d(x_0,y_0),
    \, \forall\, t\ge 0.
\end{equation}
\end{definition}
Global IES of \eqref{eq:ode} w.r.t.\ the Euclidean metric on $\mathbb{R}^n$ implies the existence of a global exponentially stable equilibrium point, but the converse is false.
\begin{definition}
The system \eqref{eq:ode} is said to be \emph{incrementally stable} (IS) in a forward invariant set $G \subset \mathbb{R}^n$ with respect to a metric $d$ if there exists a class-$\mathcal{K}$ function $\sigma$ such that, for every $x_0,y_0 \in G$, and every $t \geq 0$, it holds that:
\begin{equation}\label{eq:I.S}
    d\!\left(
        \varphi(x_0,t),
        \varphi(y_0,t)
    \right)
    \leq
    \sigma\!\left(d(x_0,y_0)\right).
\end{equation}
\end{definition}

If $\sigma$ is the identity and $d$ is defined via any $\ell_p-$norm then we say the system is \emph{non-expansive}. 

As before let $G$ be forward invariant and let $S \subseteq G$ be nonempty and closed relative to $G$. For any point $x \in G$ its distance to $S$ is: 
\begin{equation}
    \dist(x,S) := \inf_{s\in S} d(x,s).
\end{equation}

\begin{definition} 
The set \(S\) is \emph{Lyapunov stable} relative to \(G\) if, for every \(\delta>0\), there exists \(\eta>0\) such that for every $x \in G$ where:
\begin{equation}
    \dist(x,S)< \eta \quad \Longrightarrow \quad \dist(\varphi(x,t),S) <\delta, \qquad \forall t\ge 0.
\end{equation}
The set $S$ is \emph{locally asymptotically stable} relative to $G$ if it is Lyapunov stable relative to $G$ and there exists $\rho>0$ such that for every $x \in G$:

\begin{equation}
    \dist(x,S)<\rho \quad \Longrightarrow \quad \dist(\varphi(x,t),S)\to 0 \quad \text{as } t\to\infty.
\end{equation} 

The basin of attraction of $S$ is 
\begin{equation}
    B_S := \big\{ x\in G: \dist(\varphi(x,t),S)\to 0 \text{ as } t\to\infty \big\}.
\end{equation} 

If $B_S=G$, then $S$ is said to be \emph{globally attractive} relative to \(G\). If, in addition, \(S\) is Lyapunov stable, then \(S\) is \emph{globally asymptotically stable} relative to \(G\). \end{definition}

\begin{definition}[Global semistability \cite{bhat2003nontangency,bhat2010arclength}]\label{def:semistability}
Let $G$ be forward invariant and let $S=\{x\in G:g(x)=0\}$ be its equilibrium set. The system is \emph{globally semistable relative to $G$} if the endpoint map
\begin{equation}
    P:G\to S,\qquad P(x):=\lim_{t\to\infty}\varphi(x,t),
\end{equation}
is well defined for every $x\in G$, and every equilibrium in $S$ is Lyapunov stable relative to $G$. Throughout the paper, $P$ always denotes the endpoint map of the system under consideration.
\end{definition}

\subsection{Gradient Flows and the Generalized Polyak--\L{}ojasiewicz Inequalities}

Throughout the text, let $\loss:\mathbb{R}^n\to\re$ be a twice continuously differentiable function. Assume $\loss$ is bounded below and that it attains its minimum at some point in $\mathbb{R}^n$, and consider the following optimization problem
\begin{equation}
        \label{eq:def-optprob}
        \underset{x\in\mathbb{R}^n}{\textrm{minimize}}\quad\quad f(x),
\end{equation}
to be solved by finding the equilibrium points of the gradient flow 
\begin{equation}\label{eq:gradient-flow}
    \dot x =-\nabla f(x).
\end{equation}
It is well known that if $\loss$ is coercive with a well-defined minimum value, then trajectories of the gradient flow will be pre-compact (see \cite{Arthur-PLI} for a brief overview). If we further assume that $\loss$ is real-analytic, then by \L{}ojasiewicz Theorem we know that every trajectory converges to a unique critical point of $f$ and that trajectories have finite length; finite length and convergence persist under more general Kurdyka-\L{}ojasiewicz inequalities \cite{kurdyka1998gradients,absil2005convergence,bolte2010characterizations}.

Although these results guarantee convergence of trajectories to the set of critical points, $\{x \in \mathbb{R}^n~|~\nabla\loss(x)=0\}$, there is no guarantee that they will converge to the set of global minimizers of $\loss$, $\{x\in \mathbb{R}^n ~|~f(x)=\min_{\mathbb{R}^n}f\}$. A common assumption in the nonconvex optimization literature is to require $\loss$ to satisfy a Polyak--\L{}ojasiewicz inequality, with the recent works \cite{Arthur-PLI,cui2025small,cui2024small} proposing a generalized framework for the standard inequality as follows:

\begin{definition}[Generalized Polyak--\L{}ojasiewicz inequalities]
\label{def:genPLI}
Let $\loss:\mathbb{R}^n\to\mathbb{R}$ and let
$\mloss:=\min_{x\in\mathbb{R}^n}\loss(x)$. We say that $\loss$ satisfies a
\emph{generalized Polyak--\L{}ojasiewicz inequality} if there exists a
continuous function $\alpha:\mathbb{R}_{+}\to\mathbb{R}_{+}$, with
$\alpha(r)=0$ if and only if $r=0$, such that
\begin{equation}
\label{eq:def-genPLI}
    \|\nabla \loss(x)\|
    \geq
    \alpha\big(\loss(x)-\mloss\big),
    \qquad \forall x\in\mathbb{R}^n.
\end{equation}
According to the properties of $\alpha$, we make the following
distinctions:
\begin{itemize}
    \item We refer to the generalized \PLI\ itself, with no further restriction on $\alpha$, as a \kcPLI. Since $\alpha$ is continuous and vanishes only at $r=0$, it is bounded away from zero on every compact subinterval of $(0,\infty)$:
    \begin{equation}
        \inf_{s\in[a,b]}\alpha(s)>0,
        \qquad
        \forall\,0<a<b<\infty.
    \end{equation}
    \item $\loss$ satisfies a \kPLI\ if $\alpha$ is of class $\mathcal{K}$.
    \item $\loss$ satisfies a \kinfPLI\ if $\alpha$ is of class $\mathcal{K}$ and
    \begin{equation}
        \lim_{r\to\infty}\alpha(r)=\infty.
    \end{equation}
\end{itemize}
\end{definition}

The classical Polyak--\L{}ojasiewicz inequality is recovered when
\begin{equation}
\label{eq:def-gPLI}
\alpha(r)=\sqrt{\xi r}
\end{equation}
for some $\xi>0$; we refer to this case as a \emph{global} \PLI\
(\gPLI).

\subsection{Contractivity and Weak Contractivity under Logarithmic Norms}

In this section, we introduce the class of logarithmic norms, define contraction with respect to such functionals, and show that this class of metrics is too restrictive to prove contractivity of gradient flows satisfying a \kinfPLI.

\begin{definition}{\cite{desoer2009feedback}}
Let $\|\cdot\|$ denote a norm on $\mathbb{R}^n$, and let the same symbol denote its induced matrix norm on $\mathbb{R}^{n \times n}$. The \emph{logarithmic norm} of a matrix $A \in \mathbb{R}^{n \times n}$ associated with $\|\cdot\|$ is defined by
\begin{equation}
    \mu(A) := \lim_{h \to 0^+} \frac{\|I + hA\| - 1}{h}.
\end{equation}
\end{definition}

Unless otherwise specified, $\|\cdot\|=\|\cdot\|_2$. We denote by $\mu_p$ the logarithmic norm induced by the $\|\cdot\|_p$, and by $\mu_{p,Q}$ the logarithmic norm induced by the weighted norm $\|x\|_{p,Q}:=\|Qx\|_p$.

Now consider the ODE \eqref{eq:ode}, with Jacobian denoted by $\nabla g := \frac{\partial g}{\partial x}$. The notion of contraction based on logarithmic norms and presented in \cite{contractive-with-inputs-sontag} is defined via the Jacobian $\nabla g$ and presented below:

\begin{definition}{\cite{contractive-with-inputs-sontag}}
The system \eqref{eq:ode} is said to be \emph{infinitesimally contracting} on a set $G \subset \mathbb{R}^n$ if there exists a norm $\|\cdot\|$, with associated logarithmic norm $\mu$, and a constant $\nu>0$ such that
\begin{equation}\label{def:infinitesimally-contracting}
    \mu(\nabla g(x)) \leq -\nu,
    \qquad \forall x\in G.
\end{equation}
If instead
\begin{equation}\label{def:infinitesimally-nonexpansive}
    \mu(\nabla g(x)) \leq 0,
    \qquad \forall x\in G,
\end{equation}
we say that the system is \emph{infinitesimally weakly contractive} on $G$ with respect to $\|\cdot\|$.
\end{definition}

\begin{remark}{\cite{contractive-with-inputs-sontag}}
Informally, the key consequence of infinitesimal contraction is that, whenever trajectories are forward invariant in a convex set $G$, the system is IES on $G$. Likewise, infinitesimal weak contraction implies the flow is non-expansive in the corresponding norm.
\end{remark}

Now we introduce an interesting connection between the logarithmic norm, the spectral abscissa and the induced matrix norm.

\begin{lemma}{\cite{desoer2009feedback}}\label{lemma:tightness-sysmmetric}
Let $A \in \mathbb{R}^{n \times n}$ and let $\|\cdot\|$ be any norm with associated logarithmic norm $\mu(\cdot)$. If $\Lambda(A) = \max\{\Re(\lambda) : \lambda \text{ is an eigenvalue of } A\}$ denotes the spectral abscissa, then, for every eigenvalue $\lambda$ of $A$,
\begin{equation}
-\|A\| \;\le\; -\mu(-A) \;\le\; \Re(\lambda) \;\le\; \Lambda(A) \;\le\; \mu(A) \;\le\; \|A\|.
\end{equation}

Further if $A \in \mathbb{R}^{n \times n}$ is symmetric then $\mu_2(A) = \Lambda(A) =  \lambda_{\max}(A)$.
\end{lemma}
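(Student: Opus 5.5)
The proof splits into the chain of inequalities for a general norm and the symmetric Euclidean case; throughout we use the definition $\mu(A)=\lim_{h\to0^+}(\|I+hA\|-1)/h$ together with sub-multiplicativity of the induced norm.

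\textbf{Outer bounds.} By the triangle inequality, $\|I+hA\|\le 1+h\|A\|$, so $\mu(A)\le\|A\|$. Applying the same bound to $-A$ gives $\mu(-A)\le\|-A\|=\|A\|$, which is $-\|A\|\le-\mu(-A)$.

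\textbf{Middle bound $\Re(\lambda)\le\mu(A)$.} The step $\Re(\lambda)\le\Lambda(A)$ is just the definition of the spectral abscissa. For $\Lambda(A)\le\mu(A)$ I would avoid complex eigenvectors, because the norm lives on $\mathbb{R}^n$, and use the spectral radius instead. Every eigenvalue $1+h\lambda$ of $I+hA$ satisfies
\[
|1+h\lambda|\le \rho(I+hA)\le\|I+hA\|.
\]
Expanding,
\[
|1+h\lambda| = \sqrt{(1+h\Re\lambda)^2+h^2(\Im\lambda)^2}\ \ge\ 1+h\Re\lambda
\]
for small $h>0$. Hence $(\|I+hA\|-1)/h\ge\Re\lambda$, and letting $h\to0^+$ gives $\mu(A)\ge\Re\lambda$ for every eigenvalue, so $\mu(A)\ge\Lambda(A)$.

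\textbf{Lower bound $-\mu(-A)\le\Re(\lambda)$.} Apply the previous step to $-A$, whose eigenvalues are $-\lambda$. This gives $\mu(-A)\ge\Re(-\lambda)=-\Re\lambda$, which rearranges to the claim.

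\textbf{Symmetric case.} Let $A$ be symmetric with eigenvalues $\lambda_i$. Then $I+hA$ is symmetric with eigenvalues $1+h\lambda_i$, and these are positive for small $h$. The induced $2$-norm of a symmetric matrix is its largest absolute eigenvalue, so $\|I+hA\|_2=1+h\lambda_{\max}(A)$ for small $h$. Substituting into the definition, the difference quotient is exactly $\lambda_{\max}(A)$, so $\mu_2(A)=\lambda_{\max}(A)$. Since all eigenvalues are real, this also equals $\Lambda(A)$.

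\textbf{Main obstacle.} The only delicate point is that eigenvectors may be complex while the norm is defined on real vectors. The spectral-radius bound $\rho(M)\le\|M\|$ for induced norms sidesteps this. That bound holds because the complexified norm induces the same spectral radius through Gelfand's formula $\rho(M)=\lim_k\|M^k\|^{1/k}\le\|M\|$, which uses only sub-multiplicativity. The existence of the limit defining $\mu$, via convexity of $h\mapsto\|I+hA\|$, is taken from the definition.
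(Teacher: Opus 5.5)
The paper gives no proof of this lemma; it only cites Desoer and Vidyasagar, so there is no in-paper argument to compare against. Your proof is correct and self-contained.

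The usual textbook route to $\Lambda(A)\le\mu(A)$ uses a norm on $\mathbb{C}^n$ and an eigenvector $v$. Then $\|(I+hA)v\|=|1+h\lambda|\,\|v\|$ gives $\|I+hA\|\ge|1+h\lambda|\ge 1+h\Re\lambda$ at once. An equivalent version compares $\|e^{At}\|\le e^{\mu(A)t}$ with $\rho(e^{At})=e^{\Lambda(A)t}$.

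You instead keep the norm on $\mathbb{R}^n$, as the paper's setup does, and you correctly locate the only real/complex issue in the bound $\rho(M)\le\|M\|$. Your justification of that bound is sound once stated precisely. No complexification is needed, because all norms on the finite-dimensional space $\mathbb{R}^{n\times n}$ are equivalent, so Gelfand's formula $\rho(M)=\lim_k\|M^k\|^{1/k}$ holds for the real induced norm as well. It is the next step, $\|M^k\|^{1/k}\le\|M\|$, that uses only sub-multiplicativity. A more elementary variant avoids Gelfand altogether: $\rho(M)^k=\rho(M^k)\le\|M^k\|_2\le C\|M^k\|\le C\|M\|^k$, with $C$ a norm-equivalence constant, followed by taking $k$-th roots.

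The remaining steps are fine:
\begin{itemize}
\item the outer bounds from $\|I\|=1$ and the triangle inequality;
\item the lower bound obtained by applying the middle bound to $-A$;
\item the symmetric case via $\|I+hA\|_2=1+h\lambda_{\max}(A)$ for small $h$.
\end{itemize}
Two minor remarks. The inequality $|1+h\lambda|\ge 1+h\Re\lambda$ holds for every $h>0$, not only for small $h$. Also, all your inequalities survive with $\liminf$ or $\limsup$ in place of the limit, so you never actually use that the limit defining $\mu$ exists.
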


This result is particularly useful in the context of this paper, since for gradient systems, the Jacobian of the vector-field is the negation of the Hessian of the cost, which is always symmetric. This allows one to prove the following statement about contractivity of gradient flows with respect to logarithmic norms:

\begin{remark}{\cite{fitzsimmons2022note}}\label{rm:inf-contracting-convex}
    The system \eqref{eq:gradient-flow} is infinitesimally contracting in $\mathbb{R}^n$ with contraction rate $\nu$ if and only if $f$ is $\nu-$strongly convex, and it is infinitesimally weakly contractive if and only if $f$ is
    convex.
\end{remark}

\begin{remark}\label{rm:no-log-norm-contraction}
    The gradient flow of a \gPLI\ objective can fail to be infinitesimally contracting, and even infinitesimally weakly contractive, with respect to \emph{every} norm. Indeed, a \gPLI\ objective need not be convex, so there can exist points $x$ where $\lambda_{\min}(\nabla^2 f(x))<0$. By Lemma~\ref{lemma:tightness-sysmmetric}, for any norm with associated logarithmic norm $\mu$,
    \begin{equation}
        \mu(-\nabla^2 f(x)) \ge \Lambda(-\nabla^2 f(x)) = -\lambda_{\min}(\nabla^2 f(x)) > 0.
    \end{equation}
\end{remark}

\textbf{Example:} Consider $f(x) = \frac{1}{2}x^2 - 2 \cos(x)$. This function can be verified numerically to satisfy a \gPLI\ with $\xi = \frac{1}{4}$ and has a unique critical point at the origin, but $\nabla^2 f(x) = 1 + 2\cos(x) < 0$ on a set of infinite measure.

\subsection{Contractivity and Weak Contractivity under Riemannian Metrics}

The limitations of logarithmic norms for proving contractivity of gradient flows when the cost satisfies a \gPLI\ motivate us to instead consider the class of Riemannian metrics.

\begin{definition}{\cite{lee2018introduction}}
Let $G$ be an open, path-connected subset of $\mathbb{R}^n$.  For our purposes, a \emph{Riemannian metric} is a function $M \in C^{1}(G,\mathbb{S}^n)$ such that $M(x)$ is positive definite for all $x \in G$.  For $v,w \in \mathbb{R}^n$, the bilinear form $v^T M(x) w$ defines an inner product at $x$.  

We define the orbital derivative of $M$ as:
\begin{equation}
\dot M(x) := \left.\frac{d}{dt} M(\varphi(x,t))\right|_{t=0}.
\end{equation}
If $M \in C^1(G,\mathbb{S}^n)$, then
\begin{equation}
\dot M_{ij}(x) = \nabla M_{ij}(x) \cdot g(x), \quad i,j=1,\dots,n.
\end{equation}
\end{definition}

\begin{definition}
Let $M$ be a Riemannian metric and let
\[
\begin{aligned}
\Gamma(x_0,x_1)
:=\Big\{\gamma:[0,1]\to G \ \text{piecewise }C^1 \ \big|\ 
&\ \gamma(0)=x_0,\\
&\ \gamma(1)=x_1\Big\}.
\end{aligned}
\]
The (Riemannian) distance between $x_0,x_1\in G$ induced by $M$ is
\begin{equation}\label{eq:riem_distance}
\begin{aligned}
d_M(x_0,x_1)
:=\inf_{\gamma\in\Gamma(x_0,x_1)} \int_{0}^{1} (\frac{\partial}{\partial s} \gamma(s)^{\top} M(\gamma(s))\frac{\partial}{\partial s}\gamma(s))^{\frac{1}{2}}ds.
\end{aligned}
\end{equation}
\end{definition}

\begin{definition}{\cite{GIESLConverse}}\label{def:contraction-riemannian}
Let $M$ be a Riemannian metric. For $v \in \mathbb{R}^n$, define
\begin{equation}
L_M(x;v) := \tfrac{1}{2} v^{\top} \big(M(x) \nabla g + \nabla g^{\top} M(x) + \dot M(x)\big) v.
\end{equation}
Then set $\mathcal{L}_M(x) := \max_{v^T M(x) v = 1} L_M(x;v).$
We call $M$ a \emph{contraction metric} on $G$ with rate $\nu > 0$ if
\begin{equation}\label{eq:contractivity-of-M}
\mathcal{L}_M(x) \le -\nu, \quad \forall x \in G.
\end{equation}

If $G = \mathbb{R}^n$ then we say $M$ is a \emph{global contraction metric}. If instead $\nu = 0$ so that
\begin{equation}\label{eq:weak-contractivity-of-M}
    \mathcal{L}_M(x) \leq 0,
    \qquad \forall x\in G,
\end{equation}
we say that $M$ is a \emph{weak contraction metric} on $G$, and that
the system is \emph{weakly contractive} with respect to $M$. This property is also referred to as \emph{semi-contraction} in \cite{Wensing_2020}.
\end{definition}

\begin{remark}\label{rm:lyapunov-equation}
Condition \eqref{eq:contractivity-of-M} is equivalent to the matrix inequality,
\begin{equation}
M(x) \nabla g + \nabla g^{\top} M(x) + \dot M(x) \preceq -2\nu M(x), \quad \forall x \in G,
\end{equation}
where $A \preceq B$ means $A-B$ is negative semidefinite. Similarly, weak contractivity is equivalent to
\begin{equation}
M(x)\nabla g+\nabla g^\top M(x)+\dot M(x)
\preceq 0,
\qquad \forall x\in G.
\end{equation} 
If $M(x) = Q \in \mathbb{S}^n$ is constant then by Lemma 2.16 in \cite{FB-CTDS} these conditions are respectively equivalent to $\mu_{2,Q^{\frac{1}{2}}}(\nabla g)\leq-\nu$ and $\mu_{2,Q^{\frac{1}{2}}}(\nabla g)\leq0$.
\end{remark}

\begin{remark}{\cite{aghannan2003intrinsic}}
    Much like the case of contractive systems with respect to logarithmic norms, proving $M$ is a contraction metric on a forward invariant open path-connected set $G$ shows that: 
    \begin{equation}
    \label{eq:ARestimate}
    \displaystyle d_{M}\big(\varphi(x_0,t),\varphi(x_1,t)\big) \;\le\; e^{-\nu t}\, d_{M}(x_0,x_1).
    \end{equation}
    If $M$ is instead a weak contraction metric, then
    \begin{equation}\label{eq:weak-ARestimate}
    d_M\big(\varphi(x_0,t),\varphi(x_1,t)\big) \leq d_M(x_0,x_1),
    \end{equation}
    so the flow is non-expansive with respect to the Riemannian distance $d_M$.
\end{remark}

In general, if $M$ is a global contraction metric, estimate \eqref{eq:ARestimate} and \eqref{eq:weak-ARestimate} do not allow for a direct translation into estimates in the $\ell_2$ norm. The following proposition shows when $\eqref{eq:ARestimate}$ can be translated into $\ell_2$ norm bounds.

\begin{proposition}\label{prop:ies}
    Assume there exist $\overline{\lambda},\underline{\lambda}> 0$ such that $\underline{\lambda} I \preceq M(x) \preceq \overline{\lambda} I$ for all $x \in \mathbb{R}^n$ and $M$ is a global (weak) contraction metric ($\nu=0$ in the weak case). Then for any $x_0,x_1 \in \mathbb{R}^n$:
    \begin{equation}\label{eq:ies-euclidean}
        \|\varphi(x_0,t) - \varphi(x_1,t)\|_2 \leq e^{-\nu t} \big(\frac{\overline{\lambda}}{\underline{\lambda}}\big)^{\frac{1}{2}} \|x_0-x_1\|_2.
    \end{equation}
\end{proposition}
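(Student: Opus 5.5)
The plan is to sandwich the Riemannian distance $d_M$ between scaled Euclidean distances, and then pass the contraction estimate \eqref{eq:ARestimate} (or its weak version \eqref{eq:weak-ARestimate}) through these bounds.

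\emph{Step 1: comparison of lengths.} For any piecewise $C^1$ curve $\gamma$, the uniform bounds $\underline{\lambda} I \preceq M \preceq \overline{\lambda} I$ give the pointwise inequality
\begin{equation*}
\underline{\lambda}^{1/2}\|\gamma'(s)\|_2 \le \big(\gamma'(s)^\top M(\gamma(s))\gamma'(s)\big)^{1/2} \le \overline{\lambda}^{1/2}\|\gamma'(s)\|_2 .
\end{equation*}
Integrate over $s\in[0,1]$ and take the infimum over $\Gamma(x_0,x_1)$. On the upper side, use the straight segment $\gamma(s)=x_0+s(x_1-x_0)$, which is admissible since $G=\mathbb{R}^n$; this gives $d_M(x_0,x_1)\le \overline{\lambda}^{1/2}\|x_0-x_1\|_2$. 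On the lower side, use the fact that every curve joining $x_0$ and $x_1$ has Euclidean length at least $\|x_0-x_1\|_2$; this gives $d_M(x_0,x_1)\ge \underline{\lambda}^{1/2}\|x_0-x_1\|_2$.

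\emph{Step 2: chaining.} Since $G=\mathbb{R}^n$ is trivially forward invariant, open and path-connected, the remark following Definition~\ref{def:contraction-riemannian} (citing \cite{aghannan2003intrinsic}) yields $d_M(\varphi(x_0,t),\varphi(x_1,t))\le e^{-\nu t}d_M(x_0,x_1)$, with $\nu=0$ in the weak case. Combining this with Step 1:
\begin{equation*}
\underline{\lambda}^{1/2}\|\varphi(x_0,t)-\varphi(x_1,t)\|_2 \le d_M(\varphi(x_0,t),\varphi(x_1,t)) \le e^{-\nu t} d_M(x_0,x_1)\le e^{-\nu t}\overline{\lambda}^{1/2}\|x_0-x_1\|_2 .
\end{equation*}
Dividing by $\underline{\lambda}^{1/2}$ gives \eqref{eq:ies-euclidean}.

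\emph{Main obstacle.} The only subtle point is that the contraction estimate requires solutions to exist for all $t\ge0$. If forward completeness is not already part of the standing assumptions, I would establish it as follows. Take any $x_0$ and an equilibrium or other reference trajectory; the estimate holds on the maximal interval of existence, and the Euclidean bound from the chain above keeps $\varphi(x_0,t)$ within a bounded distance of $\varphi(x_1,t)$. Applying this to nearby initial conditions, together with the standard argument that $d_M$-bounded sets are Euclidean-bounded, rules out finite-time blow-up. If global existence is implicitly assumed, as the remark suggests, then this step is immediate.
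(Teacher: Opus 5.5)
Your argument is correct and matches the paper's proof: you sandwich $d_M$ between $\underline{\lambda}^{1/2}\|x_0-x_1\|_2$ and $\overline{\lambda}^{1/2}\|x_0-x_1\|_2$ and chain this with the contraction estimate \eqref{eq:ARestimate}; your lower bound, via the Euclidean length of an arbitrary curve, is stated more directly than the paper's appeal to straight-line geodesics. The forward-completeness aside goes beyond the paper, which takes global existence of $\varphi$ for granted; your sketch there is loose (transporting a connecting curve requires every point on it to stay defined up to time $t$), and a cleaner route is to note that $w(t)=g(\varphi(x_0,t))$ solves $\dot w=\nabla g\,w$, so $w^\top M w$ is non-increasing, which gives $\|\dot x(t)\|_2\le(\overline{\lambda}/\underline{\lambda})^{1/2}\|g(x_0)\|_2$ and rules out finite escape.
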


\begin{proof}
We will first prove the lower bound on $d_{M}\big(x_0,x_1\big)$. From the definition of distance, and by the fact that $v^TM(x)v \geq \underline{\lambda}\|v\|_2^2$, we observe that
\begin{equation}
\begin{aligned}
    & d_{M}\big(x_0,x_1\big) = 
    \inf_{\gamma\in\Gamma(x_0,x_1)} \int_{0}^{1} (\frac{\partial}{\partial s} \gamma(s)^{\top} M(\gamma(s))\frac{\partial}{\partial s}\gamma(s))^{\frac{1}{2}}ds \\
    & \geq \inf_{\gamma\in\Gamma(x_0,x_1)} \underline{\lambda}^{\frac{1}{2}} \int_{0}^1 (\frac{\partial}{\partial s} \gamma(s)^T\frac{\partial}{\partial s}\gamma(s))^{\frac{1}{2}} ds
     = \underline{\lambda}^{\frac{1}{2}} \|x_0 - x_1\|_2.
\end{aligned}
\end{equation}
where the last equality is obtained by the fact that for $M$ a constant matrix, the geodesic is a straight line. Similarly $d_{M}\big(x_0,x_1\big) \leq \overline{\lambda}^{\frac{1}{2}} \|x_0 - x_1\|_2$ and combining the two inequalities with \eqref{eq:ARestimate} proves \eqref{eq:ies-euclidean}.
\end{proof}

Thus, establishing IES for a gradient flow reduces to constructing a global contraction metric $M$ that is \emph{uniformly bounded above and below}. Lastly, we present an important lemma that will be used later to construct our Riemannian metrics. We omit the proof and defer the reader to Lemma 2.7 of \cite{GIESLConverse}.

\begin{lemma}{\cite{GIESLConverse}}\label{lemma:riemmanina-metric-with-lyapunov}
Let $N: G \to \mathbb{S}^n$ be a Riemannian metric and $V \in C^1(G,\mathbb{R})$.  
Then
\begin{equation}
M(x) := e^{2V(x)} N(x)
\end{equation}
is also a Riemannian metric, and
\begin{equation}
\mathcal{L}_M(x) = \mathcal{L}_N(x) + \dot V(x).
\end{equation}
\end{lemma}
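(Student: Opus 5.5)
The statement to be proved is the lemma just stated: if $M(x)=e^{2V(x)}N(x)$, then $\mathcal{L}_M(x)=\mathcal{L}_N(x)+\dot V(x)$.

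The first part is immediate. $e^{2V(x)}>0$ and $N(x)$ is positive definite, so $M(x)$ is symmetric positive definite. Since $V\in C^1$ and $N\in C^1$, $M$ is also $C^1$. Hence $M$ is a Riemannian metric in the sense used here.

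For the identity, I will compute the orbital derivative by the product rule. Along trajectories,
\begin{equation}
\dot M(x)=2\dot V(x)\,e^{2V(x)}N(x)+e^{2V(x)}\dot N(x),
\end{equation}
where $\dot V(x)=\nabla V(x)\cdot g(x)$. Substituting this into the quadratic form defining $L_M$ gives
\begin{equation}
L_M(x;v)=e^{2V(x)}\Big(L_N(x;v)+\dot V(x)\,v^\top N(x)v\Big).
\end{equation}
So the whole computation reduces to this one identity.

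The main step is the normalization. The maximum defining $\mathcal{L}_M$ is taken over the set $\{v:v^\top M(x)v=1\}$. Write $v=e^{-V(x)}w$. Then $v^\top M(x)v=1$ holds exactly when $w^\top N(x)w=1$, and the map $v\mapsto w$ is a bijection between the two constraint sets. Since $L_M(x;\cdot)$ is quadratic in $v$,
\begin{equation}
L_M(x;v)=e^{-2V(x)}L_M(x;w)=L_N(x;w)+\dot V(x)\,w^\top N(x)w=L_N(x;w)+\dot V(x).
\end{equation}
Taking the maximum over $w^\top N(x)w=1$ then yields $\mathcal{L}_M(x)=\mathcal{L}_N(x)+\dot V(x)$, as claimed.

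The only point requiring care is that the metric is conformally rescaled. One must track the factor $e^{2V}$ both in the quadratic form and in the normalization constraint, and check that these two factors cancel exactly. They do, so I expect no genuine obstacle.
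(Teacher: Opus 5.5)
Your proof is correct. The product-rule identity $L_M(x;v)=e^{2V(x)}\big(L_N(x;v)+\dot V(x)\,v^\top N(x)v\big)$, combined with the rescaling $w=e^{V(x)}v$ that maps $\{v^\top M(x)v=1\}$ bijectively onto $\{w^\top N(x)w=1\}$, gives exactly $\mathcal{L}_M=\mathcal{L}_N+\dot V$. The paper itself omits the proof and defers to Lemma~2.7 of \cite{GIESLConverse}; your argument is the standard direct computation behind that identity.
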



The following proposition shows that the distance induced by a uniformly lower-bounded metric, such as the one constructed in Theorem~\ref{prop:global} below, is proper and compatible; it is also used in Section~IV.

\begin{proposition}\label{prop:equivalence-of-topologies}
If there exists $\underline{\lambda}>0$ such that
\begin{equation}
    \underline{\lambda} I \preceq M(x),\qquad \forall x\in\mathbb R^n,
\end{equation}
and $M\in C^1(\mathbb R^n,\mathbb S^n)$, then the Riemannian distance $d_M$ defined by \eqref{eq:riem_distance} induces the same topology as the Euclidean metric on $\mathbb R^n$. Moreover, $d_M$ is proper.
\end{proposition}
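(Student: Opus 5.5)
The argument has three steps. First, a global Euclidean lower bound on $d_M$. Second, a local upper bound on $d_M$. Third, properness, obtained by combining the first step with the Heine--Borel theorem.

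\textbf{Step 1 (global lower bound).} The argument in the proof of Proposition~\ref{prop:ies} uses only $\underline{\lambda} I \preceq M(x)$. For any piecewise $C^1$ curve $\gamma$ from $x_0$ to $x_1$, its $M$-length is at least $\underline{\lambda}^{1/2}$ times its Euclidean length, and the Euclidean length is at least $\|x_0-x_1\|_2$. Taking the infimum over curves gives
\begin{equation}
d_M(x_0,x_1)\ \ge\ \underline{\lambda}^{1/2}\,\|x_0-x_1\|_2,\qquad \forall x_0,x_1\in\mathbb{R}^n .
\end{equation}
It follows that $d_M$ is a genuine metric: symmetry and the triangle inequality come from reversing and concatenating curves, and the bound gives definiteness. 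It also follows that every Euclidean ball $B_2(x,r)$ contains the $d_M$-ball of radius $\underline{\lambda}^{1/2}r$ about $x$. Hence every Euclidean-open set is $d_M$-open.

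\textbf{Step 2 (local upper bound).} Fix $x\in\mathbb{R}^n$ and $r>0$. The closed ball $\bar B_2(x,r)$ is compact and $M$ is continuous, so
\begin{equation}
\overline{\lambda}_{x,r}:=\max_{y\in \bar B_2(x,r)}\lambda_{\max}(M(y))<\infty .
\end{equation}
For $y\in B_2(x,r)$, the straight segment from $x$ to $y$ stays inside $\bar B_2(x,r)$. Its $M$-length is therefore at most $\overline{\lambda}_{x,r}^{1/2}\|x-y\|_2$, which gives $d_M(x,y)\le \overline{\lambda}_{x,r}^{1/2}\|x-y\|_2$ on that ball. Consequently every $d_M$-ball about $x$ contains a small Euclidean ball about $x$. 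So every $d_M$-open set is Euclidean-open, and the two topologies coincide. Here only the continuity of $M$ is needed; the $C^1$ hypothesis is more than enough.

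\textbf{Step 3 (properness).} Let $K=\{y: d_M(x,y)\le R\}$ be a closed $d_M$-ball. By Step 1, $K$ lies in the Euclidean ball $\bar B_2\big(x,R\,\underline{\lambda}^{-1/2}\big)$, so $K$ is bounded. The function $d_M(x,\cdot)$ is continuous in the $d_M$-topology, and by Step 2 that topology is the Euclidean one. Hence $K$ is Euclidean-closed, and by Heine--Borel it is compact.

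The main obstacle is the upper bound in Step 2. Without a global upper bound on $M$, the metrics $d_M$ and $\|\cdot\|_2$ need not be uniformly equivalent. The resolution is to localize: the comparison is made only on a compact neighbourhood, and the straight segment is used there because it cannot leave that neighbourhood. The lower bound, in contrast, holds globally. It is exactly this lower bound that prevents closed $d_M$-balls from being unbounded and so gives properness.
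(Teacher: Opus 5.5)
Your proposal is correct and follows essentially the same route as the paper: the global lower bound $d_M(x,y)\ge\underline{\lambda}^{1/2}\|x-y\|_2$ from the proof of Proposition~\ref{prop:ies}, then a local upper bound using the straight segment inside a compact Euclidean ball, then properness because closed $d_M$-balls are Euclidean-closed and bounded. Your added remarks, that $d_M$ is a genuine metric and that only continuity of $M$ is used, are accurate and do not change the argument.
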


\begin{proof}
By the proof of Proposition \ref{prop:ies},
\begin{equation}
    d_M(x,y)\ge \underline{\lambda}^{\frac{1}{2}}\,\|x-y\|_2.
\end{equation}
Consequently, if \(d_M(x_k,x)\to0\), then
\begin{equation}
    \|x_k-x\|_2
    \le
    \frac{1}{\underline{\lambda}^{\frac{1}{2}}}d_M(x_k,x)
    \to0.
\end{equation}

Conversely, fix \(x\in\mathbb R^n\). Since $M$ is continuous, for every $r>0$ there exists $\overline{\lambda}_{x,r}>0$ such that
\begin{equation}
    M(z)\preceq \overline{\lambda}_{x,r} I,
    \qquad \forall z\in \overline B_r(x).
\end{equation}
If $y\in B_r(x)$, then the straight-line path
\begin{equation}
    \gamma(s)=x+s(y-x),\qquad s\in[0,1],
\end{equation}
is contained in $\overline B_r(x)$, hence by the proof of Proposition \ref{prop:ies}
\begin{equation}
\begin{aligned}
    d_M(x,y)
    &\le
    \overline{\lambda}_{x,r}^{\frac{1}{2}}\|x-y\|_2.
\end{aligned}
\end{equation}
Therefore, if $x_k\to x$ in the Euclidean norm, then for all sufficiently large $k$, $x_k\in B_r(x)$, and the above estimate gives
\begin{equation}
    d_M(x_k,x)\to0.
\end{equation}
Thus \(d_M\) and the Euclidean metric induce the same topology on $\mathbb R^n$.

It remains to prove properness. Fix $p\in\mathbb R^n$ and $R>0$. From
the uniform lower bound on $M$,
\begin{equation}
    d_M(x,p)\ge \underline{\lambda}^{\frac{1}{2}}\|x-p\|_2.
\end{equation}
Therefore

\begin{equation}
\begin{aligned}
    & \{x \;\; | \;\; x \in \mathbb{R}^n \text{ and } d_M(x,p) \le R\} = \overline B^{d_M}_R(p)
    \subseteq \\
    & \overline B_{\frac{R}{\sqrt{\underline{\lambda}}}}(p) = \{x \;\; | \;\; x \in \mathbb{R}^n \text{ and } \|x - p\|_2 \le \tfrac{R}{\sqrt{\underline{\lambda}}}\}.
\end{aligned}
\end{equation}

Since $d_M$ induces the Euclidean topology, the set $\overline B^{d_M}_R(p)$ is closed in the Euclidean topology. It is also bounded in the Euclidean topology by the inclusion above and therefore it is compact. Thus $d_M$ is proper.
\end{proof}

\section{Unique equilibrium}

In this section, we consider the optimization problem in \eqref{eq:def-optprob} under the assumption that the cost $\loss$ is coercive, twice continuously differentiable, admits a unique global minimizer, is locally strongly convex around that minimizer, and satisfies a generalized \PLI.

We first show that, if $\loss$ satisfies a \kcPLI\ and is locally strongly convex around its minimizer, then there exists a proper Riemannian metric whose induced distance generates the Euclidean topology and under which the gradient flow is globally contractive, hence satisfying \eqref{eq:ARestimate}. Therefore the contraction property enjoyed by strongly convex gradient flows persists for a substantially broader class of nonconvex objectives after an appropriate change of metric. The metric obtained in this construction is proper and induces the Euclidean topology but it need not be uniformly equivalent to the Euclidean metric. The obstruction is that it may grow without a uniform upper bound, allowing the Riemannian distance between points to become arbitrarily large while maintaining the same Euclidean distance. We therefore derive two sufficient conditions, combining curvature restrictions on $\loss$ with growth properties of its generalized gradient-dominance bound, under which the contraction metric is uniformly bounded above and below. Under these conditions, \eqref{eq:ARestimate} can be converted into the Euclidean estimate \eqref{eq:ies-euclidean}, yielding global IES with a generally nonunit overshoot.

The existence of a global contraction metric for objectives satisfying a \kcPLI\ that need not reduce to a classical \PLI\ may initially appear counterintuitive, since the natural Lyapunov function $\loss-\mloss$ need not decay exponentially. This is consistent with the converse result in \cite{forni2015differential}, which establishes the existence of contraction metrics on the basin of attraction of a stable hyperbolic equilibrium. In contrast to the converse construction there which relies on the Hartman--Grobman Theorem, the metrics developed here are obtained explicitly from properties of the objective function.

\subsection{Contractivity of nonconvex gradient flows}

\begin{assumption}\label{assump:1}
The function $\loss$ is coercive and admits a unique global minimizer $x^{\ast}$ where it is strongly convex, hence there exists $\nu > 0$ such that $\lambda_{\min}(\nabla^2 f(x^{\ast})) \ge \nu$. Further, $f$ satisfies a \kcPLI.
\end{assumption}

\begin{theorem}\label{prop:global}
Suppose $\loss$ satisfies Assumption \ref{assump:1}. Then for any $\epsilon \in (0,\nu)$ there exists a uniformly lower bounded Riemannian metric $\hat{M} \in C^{1}(\mathbb{R}^n, \mathbb{S}^{n})$ such that
\begin{equation}\label{eq:globalness}
\begin{aligned}
    \mathcal{L}_{\hat{M}}(x) \;\le\; -\nu + \epsilon, \; \forall x \in \mathbb{R}^n.
\end{aligned}
\end{equation}
Hence, the system is \emph{contractive} with rate $\nu-\epsilon$.
\end{theorem}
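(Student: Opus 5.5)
Our plan is to take $\hat M(x)=e^{2V(x)}I$ and apply Lemma~\ref{lemma:riemmanina-metric-with-lyapunov} with $N\equiv I$, so that the contraction rate becomes a scalar inequality. For the gradient flow $g=-\nabla f$ the Jacobian is $-\nabla^2 f(x)$, which is symmetric. Hence, by Lemma~\ref{lemma:tightness-sysmmetric} and Remark~\ref{rm:lyapunov-equation}, $\mathcal{L}_I(x)=-\lambda_{\min}(\nabla^2 f(x))$. Lemma~\ref{lemma:riemmanina-metric-with-lyapunov} then gives
\begin{equation*}
\mathcal{L}_{\hat M}(x)=-\lambda_{\min}(\nabla^2 f(x))+\dot V(x),\qquad \dot V(x)=-\nabla V(x)^\top\nabla f(x).
\end{equation*}
So it suffices to build $V\in C^2$ (for $\hat M\in C^1$), bounded below, with
\begin{equation*}
\dot V(x)\le -\nu+\epsilon+\lambda_{\min}(\nabla^2 f(x)) \quad\text{for all } x.
\end{equation*}
Uniform lower boundedness of $\hat M$ is then immediate: $\hat M\succeq e^{2\inf V}I$.

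\textbf{Step 1 (local region).} By continuity of $\nabla^2 f$ there is a sublevel set $\Omega_\delta=\{f-\mloss<\delta\}$ on which $\lambda_{\min}(\nabla^2 f)\ge \nu-\epsilon/2$. This uses coercivity and uniqueness of the minimizer: small sublevel sets shrink to $x^\ast$, since a coercive continuous $f$ has compact sublevel sets whose intersection is $\{x^\ast\}$. On $\Omega_\delta$ we only need $\dot V\le \epsilon/2$, so we may take $V$ nondecreasing in $f$.

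\textbf{Step 2 (choose $V=h(f-\mloss)$).} We take $h\in C^2([0,\infty))$ nondecreasing, so that
\begin{equation*}
\dot V=-h'(f-\mloss)\,\|\nabla f\|^2\le -h'(r)\,\alpha(r)^2,\qquad r=f-\mloss,
\end{equation*}
by the \kcPLI. On the compact sublevel set $\{r\le R\}$, $\lambda_{\min}(\nabla^2 f)$ is bounded below. Define
\begin{equation*}
c(r):=\max\Bigl\{0,\;\nu-\epsilon-\min_{\{f-\mloss\le r\}}\lambda_{\min}(\nabla^2 f)\Bigr\},
\end{equation*}
which is nondecreasing, finite for every $r$ by compactness, and zero for $r\le\delta$ by Step 1. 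We need $h'(r)\alpha(r)^2\ge c(r)$ for $r\ge\delta$, while $h'\ge 0$ everywhere. Since $\alpha$ is bounded away from zero on $[\delta/2,b]$ for every $b$, we can pick a continuous function $\beta\ge c/\alpha^2$ on $[\delta,\infty)$, locally bounded there, vanishing on $[0,\delta/2]$. One option is to majorize the upper-semicontinuous step-like bound by a smooth increasing function. Setting $h(r)=\int_0^r\beta$ then gives $h\ge 0$, so $V\ge 0$ and $\hat M\succeq I$.

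\textbf{Step 3 (regularity).} For $\hat M\in C^1$ we need $V\in C^1$. Here $V=h\circ(f-\mloss)$ with $f\in C^2$, so $h\in C^1$ suffices. Taking $\beta$ continuous makes $\hat M\in C^1$. Near $x^\ast$, $h'=0$, so no smoothness issue arises at the minimizer.

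The main obstacle is Step 2: producing a continuous majorant $\beta$ of $c/\alpha^2$ on $[\delta,\infty)$ that vanishes near zero. This requires care because $\alpha$ is only continuous and positive away from $0$, not monotone, and $c$ may be only semicontinuous. Our approach is to define
\begin{equation*}
\beta(r)=\phi(r)\,\frac{\overline c(r)}{\underline\alpha(r)^2},
\end{equation*}
where $\overline c(r)=c(r+1)$ majorizes $c$ and is replaced by a continuous nondecreasing upper envelope, $\underline\alpha(r)=\min_{s\in[\delta/2,r]}\alpha(s)$ is continuous, positive and nonincreasing, and $\phi$ is a continuous cutoff equal to $0$ on $[0,\delta/2]$ and $1$ on $[\delta,\infty)$. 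On $\{r\ge\delta\}$ this yields $\dot V\le -c(r)\le-\nu+\epsilon+\lambda_{\min}(\nabla^2 f)$. On $\Omega_\delta$, $\dot V\le 0$ and $\lambda_{\min}\ge\nu-\epsilon/2$, which establishes \eqref{eq:globalness}.
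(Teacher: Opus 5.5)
Your proposal is correct and follows essentially the paper's argument: a conformal metric $e^{2V}I$ with $V$ a nondecreasing function of $f$, Lemma~\ref{lemma:riemmanina-metric-with-lyapunov} to reduce contraction to $-\lambda_{\min}(\nabla^2 f)-h'(f-\mloss)\|\nabla f\|^2\le-\nu+\epsilon$, the \kcPLI\ to replace $\|\nabla f\|^2$ by $\alpha^2$, and $h'$ chosen as a continuous majorant of the curvature deficit divided by $\alpha^2$ that vanishes near the minimizer. The differences are only technical streamlinings: you take the deficit over sublevel sets, which is automatically nondecreasing, so the paper's upper-semicontinuity argument for the level-set maximum $m^*$ is not needed; and you build the cutoff $\phi$ into the integrand so that $V\in C^1$ directly, whereas the paper's $v$ has a derivative jump at $\mloss+\delta/2$ and is smoothed afterwards via Lemma~\ref{lemma:partition-of-metrics}.
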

The proof is provided in the appendix; however, we discuss the idea next. Notice that locally around $x^*$, Lemma \ref{lemma:tightness-sysmmetric}, Remark \ref{rm:inf-contracting-convex} and Remark \ref{rm:lyapunov-equation}, show that the metric $M(x) = I$ suffices to produce $\mathcal{L}_{I}(x) \le -\nu + \epsilon$ for $x$ in a sufficiently small neighborhood of $x^{\ast}$ and in fact it is uniformly negative around any point where $\loss$ is locally strongly convex. Further, inspired by the application of Lemma \ref{lemma:riemmanina-metric-with-lyapunov} in \cite{GIESLConverse}, given that the cost yields a natural Lyapunov function for the gradient flow, we can expect to construct a metric that satisfies \eqref{eq:globalness} globally. One challenge lies in interpolating between a local constant metric $I$, used where $\loss$ is strongly convex, and a state-dependent metric $M(x) = e^{2v(f(x))}I$ where $\loss$ isn't. 

\subsection{Incremental Exponential Stability of nonconvex gradient flows}

We now show how to construct uniformly bounded metrics that guarantee global contractivity under various \PLI\ bounds and different degrees of nonconvexity. 

\begin{theorem}\label{thm:concavity-on-annulus}
    Suppose $f$ satisfies Assumption \ref{assump:1}. Further assume $f$ is $m$-strongly-convex outside a compact set $K$ for some $m > 0$. Then the gradient flow is \emph{globally incrementally exponentially stable} and the equilibrium $x^{\ast}$ is globally exponentially stable.
\end{theorem}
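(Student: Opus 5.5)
The plan is to build a global contraction metric of the form $M(x)=e^{2V(x)}I$ that equals a constant multiple of $I$ both near $x^\ast$ and outside a large sublevel set containing $K$. Such an $M$ is uniformly bounded above and below, so Proposition~\ref{prop:ies} converts the Riemannian contraction into the Euclidean estimate \eqref{eq:ies-euclidean}. Global exponential stability of $x^\ast$ then follows by taking $x_1=x^\ast$, which is an equilibrium: $\|\varphi(x_0,t)-x^\ast\|\le c e^{-\nu' t}\|x_0-x^\ast\|$.

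The construction takes $V(x)=v(f(x)-\mloss)$ with $v\in C^1$ scalar and nonincreasing. By Lemma~\ref{lemma:riemmanina-metric-with-lyapunov}, Remark~\ref{rm:lyapunov-equation} and Lemma~\ref{lemma:tightness-sysmmetric},
\begin{equation*}
\mathcal{L}_M(x)=-\lambda_{\min}(\nabla^2 f(x)) - v'(f(x)-\mloss)\,\|\nabla f(x)\|^2 .
\end{equation*}
The goal is $\mathcal{L}_M\le -\rho$ for some $\rho>0$, and I would split $\mathbb{R}^n$ into three regions.

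\emph{Near the minimizer.} By continuity of $\nabla^2 f$ there is a ball $B_r(x^\ast)$ on which $\lambda_{\min}(\nabla^2 f)\ge \nu/2$. Coercivity and uniqueness of the minimizer give a level $a>0$ such that $\{f-\mloss\le a\}\subset B_r(x^\ast)$. On this set I take $v'=0$.

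\emph{Far away.} By coercivity, $K\subset\{f-\mloss\le b\}$ for some $b>a$. On $\{f-\mloss>b\}$ the function $f$ is $m$-strongly convex, so $v'=0$ suffices there as well.

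\emph{Intermediate region.} On the compact set $C=\{a\le f-\mloss\le b\}$, the Hessian satisfies $\lambda_{\min}(\nabla^2 f)\ge -H$ for some finite $H$. The \kcPLI\ gives $\|\nabla f\|^2\ge \underline{\alpha}^2$ with $\underline{\alpha}:=\inf_{[a,b]}\alpha>0$. Choosing $v'\le -(H+\rho)/\underline{\alpha}^2$ on $[a,b]$ therefore yields $\mathcal{L}_M\le-\rho$ on $C$.

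The main obstacle is making $v$ continuously differentiable while still having $v'=0$ on $[0,a]\cup[b,\infty)$. The remedy is to widen the interval on which $v'$ is negative. I would first pick the slightly larger levels $a/2<a$ and $b'>b$; then $\{f-\mloss\le a\}$ is still inside the convex ball, and $\{f-\mloss\ge b\}$ is outside $K$. Next I take $v'$ continuous and nonpositive, with $v'$ equal to the large negative constant on $[a,b]$, tapering to zero on $[a/2,a]$ and on $[b,b']$, and vanishing outside $[a/2,b']$. On each taper region the Hessian term is already favorable: it is at least $\nu/2$ on $[a/2,a]$ and at least $m$ on $[b,b']$. Since $v'\le0$, the second term $-v'\|\nabla f\|^2$ only helps. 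This gives $\mathcal{L}_M\le -\min(\nu/2,m,\rho)$ everywhere.

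Finally, $v$ is bounded because $v'$ has compact support, so $e^{2V}$ lies between two positive constants. Thus $\underline{\lambda}I\preceq M\preceq\overline{\lambda}I$, and Proposition~\ref{prop:ies} yields global IES.
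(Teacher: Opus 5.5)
There is a sign error that breaks the central step. Your formula $\mathcal{L}_M(x)=-\lambda_{\min}(\nabla^2 f(x))-v'(f(x)-\mloss)\,\|\nabla f(x)\|^2$ is correct. However, you then take $v$ nonincreasing with $v'\le -(H+\rho)/\underline{\alpha}^2$ on $[a,b]$. With $v'<0$ the term $-v'\|\nabla f\|^2$ is \emph{nonnegative}, so on $C$ you get the lower bound $\mathcal{L}_M(x)\ge -\lambda_{\min}(\nabla^2 f(x))+H+\rho$. This is at least $\rho>0$ at every point of $C$ where $\lambda_{\min}(\nabla^2 f)\le 0$. On the taper bands, the same term eats into the favorable Hessian bound instead of ``only helping''. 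Dynamically, $f$ decreases along the flow, so a weight $e^{2v(f-\mloss)}$ with $v$ nonincreasing grows along trajectories and inflates distances. That is the opposite of what contraction needs.

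The repair is to flip the sign throughout. Take $v$ nondecreasing, with $v'\ge (H+\rho)/\underline{\alpha}^2$ on $[a,b]$, tapering continuously to $0$ on $[a/2,a]$ and $[b,b']$, and $v'=0$ off $[a/2,b']$. Then $-v'\|\nabla f\|^2\le 0$ genuinely helps, and your three-region estimate yields $\mathcal{L}_M\le-\min(\nu/2,m,\rho)$. The function $v$ is still bounded (it rises from one constant to another), and Proposition~\ref{prop:ies} finishes.

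With that correction your argument is essentially the paper's: a conformal metric $e^{2v(f)}I$ via Lemma~\ref{lemma:riemmanina-metric-with-lyapunov}, frozen to a constant beyond a sublevel set containing $K$. The one difference is near $x^\ast$. The paper uses the linear weight $e^{2cf}$ all the way down to the minimizer, relying on the unproved Remark~\ref{rm:restricted-form-M}, and cuts off only from above via Lemma~\ref{lemma:partition-of-metrics}. Your compactly supported $v'$ instead keeps the metric constant on the strongly convex neighborhood and uses the \kcPLI\ only through $\inf_{[a,b]}\alpha>0$ on a compact band of levels. That makes your route more self-contained.
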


The proof is provided in the appendix, but the main idea is to construct a globally uniformly bounded contraction metric. The key additional point beyond Theorem~\ref{prop:global} is that the set where $\loss$ fails to be strongly convex is bounded. Hence the contraction metric only needs to be state-dependent on a bounded region; outside it, one smoothly patches it to a constant metric while preserving the contraction inequality. As a result, the metric is uniformly bounded above and below, and Proposition~\ref{prop:ies} yields global IES. Figure~\ref{fig:annulus_concavity} illustrates this regime.

\begin{figure}[t]
    \centering
    \includegraphics[width=\columnwidth]{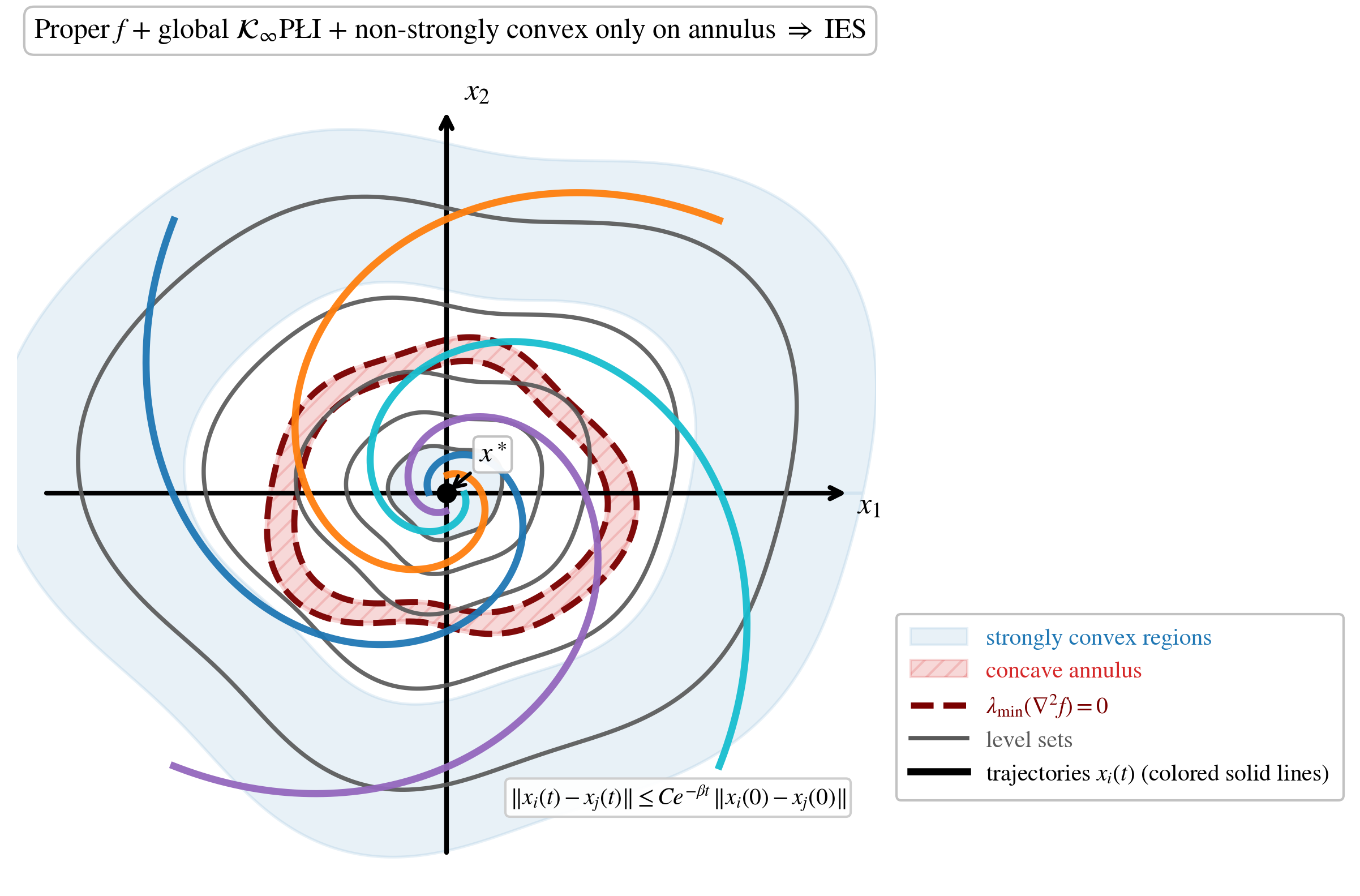}
    \caption{Schematic illustration of the assumptions and conclusion of Theorem~\ref{thm:concavity-on-annulus}. A coercive objective $f$ satisfying a \kcPLI. The equilibrium $x^\ast$ lies in a strongly convex neighborhood, and everything outside an annulus is strongly convex. A bounded region where $f$ is nonconvex, $\lambda_{\min}(\nabla^2 f(x))<  0$, is allowed and is shown in red. Gradient flow trajectories $\dot{x}=-\nabla f(x)$ starting in different quadrants tend exponentially toward each other since the system is IES by Theorem \ref{thm:concavity-on-annulus}.}
    \label{fig:annulus_concavity}
\end{figure}

Next, we look at proving IES for weakly convex functions:

\begin{definition}
    A function $f: \mathbb{R}^n \to \mathbb{R}$ is $m$-\emph{weakly convex} for some $m>0$ if $\lambda_{\min}(\nabla^2\loss(x))\ge-m$ for all $x\in\mathbb{R}^n$, i.e., if $f + \frac{m}{2}\|x\|^2$ is convex.
\end{definition}

The following class of generalized \PLI\ is useful:

\begin{definition}
    A function $f$ is said to satisfy a \emph{strong}-\PLI\ if it satisfies \eqref{eq:def-genPLI} for some comparison function $\alpha$ that is of class $\mathcal{K}_\infty$ and for any $s_0 > \mloss$
    \begin{equation}\label{eq:strong-PLI}
        \int_{s_0}^\infty \frac{1}{\alpha(s - f^{\ast})^{2}} \; ds<\infty.
    \end{equation}
\end{definition}
Condition \eqref{eq:strong-PLI} constrains $\alpha$ only at infinity, and it is not implied by the classical \gPLI, since $\int_{s_0}^{\infty} \frac{ds}{\xi(s - \mloss)} = \infty$. It holds, for example, when $\alpha(s) = cs^{p}$ with $c> 0$ and $p > \frac{1}{2}$. From this, we present the following result: 

\begin{theorem}\label{prop:above-PL-regime}
Suppose $f$ satisfies Assumption \ref{assump:1} and a strong-\PLI. Further assume $f$ is $m$-weakly convex for some $m>0$. Then the gradient flow is \emph{globally incrementally exponentially stable} and the equilibrium $x^{\ast}$ is globally exponentially stable.
\end{theorem}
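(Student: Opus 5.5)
We aim to build a global contraction metric of the form $M(x)=e^{2V(x)}I$ with $V=v\circ(\loss-\mloss)$ and show it is uniformly bounded above and below. The bounds then give global IES through Proposition~\ref{prop:ies}. Global exponential stability of $x^\ast$ follows by taking $x_1=x^\ast$, an equilibrium, in \eqref{eq:ies-euclidean}.

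\textbf{Contraction inequality.} By Lemma~\ref{lemma:riemmanina-metric-with-lyapunov} and Remark~\ref{rm:lyapunov-equation},
\[
\mathcal{L}_M(x)=\mathcal{L}_I(x)+\dot V(x)=-\lambda_{\min}(\nabla^2\loss(x))-v'(\loss(x)-\mloss)\|\nabla\loss(x)\|^2 .
\]
Weak convexity gives $\mathcal{L}_I(x)\le m$ everywhere. Local strong convexity gives $\mathcal{L}_I(x)\le-\nu+\epsilon$ on a sublevel set $\{\loss-\mloss\le r_0\}$, which lies inside a neighborhood of $x^\ast$ by coercivity and uniqueness of the minimizer. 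Set $\beta:=m+\nu$. Outside the sublevel set it suffices to have
\[
v'(s)\,\alpha(s)^2\ge \beta \quad \text{for } s\ge r_0,
\]
since then $\dot V\le -v'\alpha^2\le-\beta$ there. We therefore take $v$ nondecreasing and $C^2$, with $v\equiv 0$ on $[0,r_0/2]$ and $v'(s)=\beta/\alpha(s)^2$ for $s\ge r_0$. On $[r_0/2,r_0]$ we interpolate smoothly with $v'\ge0$.

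\textbf{Boundedness.} Below, $M\succeq I$ because $v\ge0$. Above, $v(s)\le v(r_0)+\beta\int_{r_0}^\infty \alpha(s)^{-2}\,ds<\infty$ by the strong-\PLI\ condition \eqref{eq:strong-PLI}. This is exactly where the strong-\PLI\ is used: for a \gPLI, $\alpha^2$ is linear, so $v$ grows logarithmically and $M$ is unbounded. The $\mathcal{K}_\infty$ property makes $\alpha$ bounded away from zero on $[r_0,\infty)$, so $v'$ is continuous and finite there.

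\textbf{Main obstacle: the transition region.} The difficulty is the band $\{r_0/2\le\loss-\mloss\le r_0\}$ together with the requirement that the metric be $C^1$. I would handle it by a reparametrization. Choose $r_0$ so that the strong convexity estimate $\mathcal{L}_I\le-\nu+\epsilon$ holds on the larger set $\{\loss-\mloss\le 2r_0\}$, and place the interpolation on $[r_0,2r_0]$. On that band $\mathcal{L}_I\le-\nu+\epsilon$ already holds and $\dot V\le 0$ since $v'\ge 0$, so the contraction inequality is preserved. For $s\ge 2r_0$ we use $v'=\beta/\alpha^2$. The estimate needed is that the strongly convex neighborhood contains a sublevel set; this follows from continuity of $\nabla^2\loss$ near $x^\ast$ together with coercivity and uniqueness of the minimizer. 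We also need $M\in C^1$, which holds because $v$ is $C^1$ and $\loss\in C^2$.

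Combining the three regions, $\mathcal{L}_M\le-\nu+\epsilon$ on all of $\mathbb{R}^n$, and
\[
I\preceq M\preceq e^{2\sup v}\,I .
\]
Proposition~\ref{prop:ies} then yields IES with rate $\nu-\epsilon$ and overshoot $e^{\sup v}$.
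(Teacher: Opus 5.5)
Your proposal is correct and follows essentially the paper's proof: a metric $e^{2v(f)}I$ with $v'=(\nu+m)/\alpha^2$ away from the minimizer, kept uniformly bounded by the strong-\PLI\ integrability, so that Proposition~\ref{prop:ies} gives IES with rate $\nu-\epsilon$ and overshoot $e^{\sup v}$. The only difference is that you build the $C^1$ cutoff near $x^\ast$ by hand inside the strongly convex sublevel set, whereas the paper takes $v=c\psi$ (only continuous at $s_0$) and smooths it via Lemma~\ref{lemma:partition-of-metrics}. Also, $v$ can in general only be taken $C^1$, not $C^2$, since $\alpha$ is merely continuous, but $C^1$ is all that is needed.
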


The proof is given in the appendix and follows the same construction as in Theorem~\ref{prop:global}. The new issue is that $f$ may remain nonconvex on unbounded sets, so the metric must stay state-dependent at infinity. The strong-\PLI\ assumption is used to control this growth and ensure that the resulting metric remains uniformly bounded. Proposition~\ref{prop:ies} then implies global IES. Figure~\ref{fig:prop_integrability_envelope} illustrates this case.

\begin{figure}[t]
    \centering
    \includegraphics[width=\columnwidth]{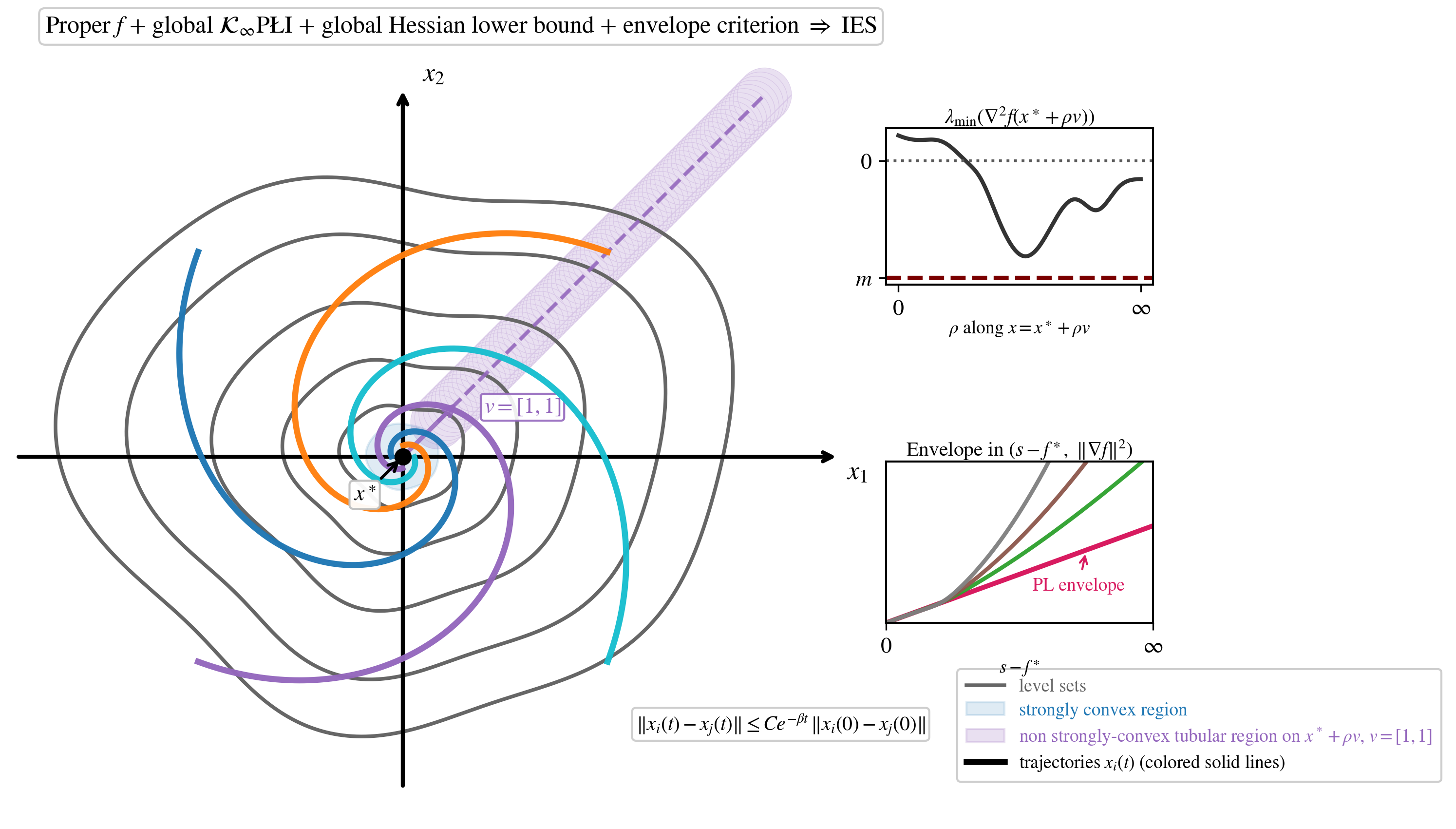}
    \caption{Schematic illustration of the assumptions and conclusion of Theorem \ref{prop:above-PL-regime}. The equilibrium $x^\ast$ lies in a strongly convex neighborhood (shaded blue region). A coercive objective $f$ satisfying a strong-\PLI, a growth condition at infinity that the classical \gPLI\ does not satisfy, is shown in the lower right graph, and a global lower bound on $\lambda_{\min}(\nabla^2 f)$ (which need not be positive) is shown in the top right graph. Gradient flow trajectories $\dot{x}=-\nabla f(x)$ starting in different quadrants tend exponentially toward each other since the system is IES by Theorem \ref{prop:above-PL-regime}.}
    \label{fig:prop_integrability_envelope}
\end{figure}

\subsection{A Generalization of Theorem \ref{thm:concavity-on-annulus} via Logarithmic Norms}

As noted in Remark \ref{rm:no-log-norm-contraction}, it is generally impossible to prove that the gradient flow of a \gPLI\ objective is infinitesimally contracting. However, the setting of Theorem \ref{thm:concavity-on-annulus} is more structured: the system is infinitesimally contracting everywhere except on a compact set that does not contain the global minimizer, and all trajectories converge asymptotically to that minimizer. This makes it possible to use logarithmic norms to show that the system is IES even though it is not infinitesimally contracting, an idea closely related to contraction after small transients \cite{margaliot2016transients}. One can generalize this beyond gradient systems:

\begin{theorem}\label{thm:ies-inf-contracting-finite-time}
Suppose there exists a set $K\subset\mathbb{R}^n$, a norm $\|\cdot\|$ with associated logarithmic norm $\mu$, and constants $\underline{\nu}>0$, $\overline{\nu}\geq-\underline{\nu}$, and $T>0$ such that
\begin{equation}
    \mu(\nabla g(x))\leq-\underline{\nu},
    \qquad \forall x\in K,
\end{equation}
and
\begin{equation}
    \mu(\nabla g(x))\leq \overline{\nu},
    \qquad \forall x\in K^c.
\end{equation}
Further assume that every trajectory of \eqref{eq:ode} spends at most a total time $T$ in $K^c$. Then \eqref{eq:ode} is incrementally exponentially stable with respect to $\|\cdot\|$, with rate $\underline{\nu}$ and overshoot $e^{(\overline{\nu} + \underline{\nu})T}$.
\end{theorem}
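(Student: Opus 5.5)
The plan is to compare trajectories through the variational equation along a straight-line family of initial conditions, and to bound the growth of each tangent vector using only the time its base trajectory spends in $K^c$. The point is that the hypothesis concerns \emph{individual} trajectories, so no convexity or forward invariance of $K$ is needed.

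Fix $x_0,y_0\in\mathbb{R}^n$ and let $z_s:=x_0+s(y_0-x_0)$ for $s\in[0,1]$. Since $g\in C^1$, the flow $\varphi$ is $C^1$ in the initial condition. Set
\begin{equation*}
w(s,t):=\frac{\partial}{\partial s}\varphi(z_s,t).
\end{equation*}
Then $w$ solves the variational equation $\partial_t w=\nabla g(\varphi(z_s,t))\,w$ with $w(s,0)=y_0-x_0$. We also have
\begin{equation*}
\varphi(y_0,t)-\varphi(x_0,t)=\int_0^1 w(s,t)\,ds,
\end{equation*}
so $\|\varphi(y_0,t)-\varphi(x_0,t)\|\le\int_0^1\|w(s,t)\|\,ds$. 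It therefore suffices to bound $\|w(s,t)\|$ uniformly in $s$.

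For the linear time-varying system above, I would invoke Coppel's inequality. This is the standard consequence of the definition of $\mu$: the upper right Dini derivative of $\|w\|$ satisfies $D^+\|w(s,t)\|\le \mu(\nabla g(\varphi(z_s,t)))\,\|w(s,t)\|$. Since $t\mapsto\mu(\nabla g(\varphi(z_s,t)))$ is continuous ($\mu$ is Lipschitz on matrices and $\nabla g$ is continuous), a Grönwall/comparison argument gives
\begin{equation*}
\|w(s,t)\|\le \exp\Big(\int_0^t \mu(\nabla g(\varphi(z_s,\tau)))\,d\tau\Big)\,\|y_0-x_0\|.
\end{equation*}
Next I split $[0,t]$ into the times $\tau$ with $\varphi(z_s,\tau)\in K$ and those with $\varphi(z_s,\tau)\in K^c$. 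These sets are measurable, at least for Borel $K$, which I would assume implicitly. Write $\tau_s(t)\le T$ for the Lebesgue measure of the second set. The pointwise bounds give
\begin{equation*}
\int_0^t\mu\le -\underline{\nu}\,(t-\tau_s(t))+\overline{\nu}\,\tau_s(t)=-\underline{\nu}t+(\overline{\nu}+\underline{\nu})\tau_s(t)\le -\underline{\nu}t+(\overline{\nu}+\underline{\nu})T.
\end{equation*}
The last step uses $\overline{\nu}+\underline{\nu}\ge 0$, which is exactly why that hypothesis appears.

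Hence $\|w(s,t)\|\le e^{(\overline{\nu}+\underline{\nu})T}e^{-\underline{\nu}t}\|y_0-x_0\|$ for every $s\in[0,1]$. Integrating over $s$ gives the claimed IES estimate with rate $\underline{\nu}$ and overshoot $e^{(\overline{\nu}+\underline{\nu})T}$.

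The main obstacle I expect is making the Coppel step rigorous for an arbitrary (non-Euclidean, possibly nonsmooth) norm. I would handle it by the standard argument: $\|w(t+h)\|=\|(I+h\nabla g)w(t)\|+o(h)\le\|I+h\nabla g\|\,\|w(t)\|+o(h)$, which yields the Dini-derivative bound. I would then apply the comparison lemma to the continuous coefficient $\mu(\nabla g(\varphi(z_s,t)))$, and only afterwards pass to the piecewise bound via the occupation-time estimate. This ordering avoids needing a comparison lemma with discontinuous coefficients. A secondary point is to note that no uniformity in $s$ is required beyond the trajectory-wise bound $T$, which holds for every $z_s$ by hypothesis.
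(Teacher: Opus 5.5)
Your proposal is correct and follows the paper's proof essentially step for step. Both use the straight-line family of initial conditions, the variational equation for $w=\partial_s\varphi(z_s,t)$, Coppel's inequality, the occupation-time bound $\int_0^t\mu\le-\underline{\nu}t+(\overline{\nu}+\underline{\nu})T$ (with $\overline{\nu}+\underline{\nu}\ge 0$ used exactly as you say), and integration over $s$. Your additional care — bounding the occupation time by $\tau_s(t)\le T$ rather than taking it to be exactly $T$, deriving Coppel's inequality via the Dini derivative, and flagging measurability of $K$ — only makes explicit details the paper leaves implicit.
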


As a Corollary one is able to prove a result akin to Theorem \ref{thm:concavity-on-annulus} but for general systems:

\begin{corollary}\label{cor:ies-inf-contracting-compact}
Suppose \eqref{eq:ode} has a unique globally asymptotically stable equilibrium $x^\ast$ and is infinitesimally contracting, with rate $\nu$ and w.r.t. a norm $\|\cdot\|$, everywhere outside a compact set $C\subset\mathbb{R}^n$ with $x^\ast \notin C$. Then \eqref{eq:ode} is incrementally exponentially stable.
\end{corollary}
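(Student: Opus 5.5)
The plan is to apply Theorem~\ref{thm:ies-inf-contracting-finite-time} with $K := C^c$, so that $K^c = C$. The proof then reduces to three checks: the contraction bound on $K$, a uniform bound $\overline{\nu}$ on $C$, and a uniform bound $T$ on the total time any trajectory spends in $C$. The first is exactly the hypothesis: $\mu(\nabla g(x))\le -\nu$ for all $x\notin C$, so we set $\underline{\nu}:=\nu$.

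For the bound on $C$, we use that the logarithmic norm satisfies $\mu(A)\le\|A\|$ (Lemma~\ref{lemma:tightness-sysmmetric}) and is Lipschitz, since $|\mu(A)-\mu(B)|\le\|A-B\|$. Because $g\in C^1$, the map $x\mapsto\mu(\nabla g(x))$ is continuous and hence bounded on the compact set $C$. We set
\[
\overline{\nu}:=\max\Big\{\sup_{x\in C}\mu(\nabla g(x)),\,-\underline{\nu}\Big\},
\]
which meets the requirement $\overline{\nu}\ge-\underline{\nu}$.

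The main step is the uniform time bound $T$.
\begin{enumerate}
\item Since $C$ is compact and $x^\ast\notin C$, there is $r>0$ with $\overline B_r(x^\ast)\cap C=\emptyset$.
\item Lyapunov stability of $x^\ast$ gives $\eta\in(0,r)$ such that any trajectory starting in $B_\eta(x^\ast)$ stays in $B_r(x^\ast)$ for all future time. In particular, such a trajectory never meets $C$ again.
\item By global attractivity, each $z\in C$ has a time $t_z$ with $\varphi(z,t_z)\in B_\eta(x^\ast)$.
\item Because $B_\eta(x^\ast)$ is open and $\varphi(\cdot,t_z)$ is continuous, there is an open neighborhood $U_z$ of $z$ such that $\varphi(y,t_z)\in B_\eta(x^\ast)$ for all $y\in U_z$.
\item Compactness of $C$ yields a finite subcover $U_{z_1},\dots,U_{z_k}$. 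Set $T:=\max_i t_{z_i}$. Then every $y\in C$ satisfies $\varphi(y,t)\in B_\eta(x^\ast)$ at some $t\le T$.
\end{enumerate}

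Now take an arbitrary trajectory. If it never enters $C$, it spends zero time there. Otherwise, let $t_0$ be its first entrance time into $C$. Within time $T$ after $t_0$ it enters $B_\eta(x^\ast)$, and after that it never returns to $C$ by step~2. So the total time spent in $C$ is at most $T$.

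Theorem~\ref{thm:ies-inf-contracting-finite-time} then gives IES with respect to $\|\cdot\|$, with rate $\nu$ and overshoot $e^{(\overline{\nu}+\nu)T}$.

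I expect the uniform time bound to be the main obstacle. Its key ingredients are the separation of $x^\ast$ from $C$, combined with Lyapunov stability, so that entering $B_\eta(x^\ast)$ is permanent; and the compactness and continuity argument, which makes the hitting time uniform.
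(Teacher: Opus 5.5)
Your proposal is correct and follows essentially the same route as the paper. Like the paper, you apply Theorem~\ref{thm:ies-inf-contracting-finite-time} with $K=\mathbb{R}^n\setminus C$, $\underline{\nu}=\nu$ and $\overline{\nu}=\max\{-\nu,\max_{C}\mu(\nabla g)\}$, and you bound the time spent in $C$ by a uniform hitting time of a small ball around $x^\ast$, starting from the compact set $C$. Two steps that the paper only sketches are made rigorous in your version: the compactness and continuity argument that makes the hitting time uniform, and the choice $\eta<r$ via Lyapunov stability, which guarantees the trajectory never returns to $C$ after entering the ball.
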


We refer the reader to the Appendix for the proof.

\subsection{Example of an objective satisfying a \kinfPLI\ condition but not a classical one}
Consider the function:
\begin{equation}
    f(x) = x\log(x + \sqrt{1 + x^2}) - \sqrt{1 + x^2} + 1.
\end{equation} Since $ \log(x + \sqrt{1 + x^2}),\sqrt{1 + x^2}$ are $C^{\infty}$, $f$ is $C^{\infty}$. Computing:

\begin{equation}
    \nabla f(x) = \log(x + \sqrt{1 + x^2}), \quad \nabla^2 f(x) = \frac{1}{\sqrt{1 + x^2}}.
\end{equation}

we observe that the global minimizer is at $x = 0$ and the function is locally strongly convex around $0$. Now we find its global \kinfPLI\ bound. Assume $f$ satisfies a \kinfPLI\ with $\alpha(s) = (cs)^{\frac{1}{p}}$, then:

\begin{equation}
    \frac{|\nabla f(x)|^p}{f(x)} = \frac{[\log(x + \sqrt{1 + x^2})]^p}{f(x)} \ge c.
\end{equation}

In the limit as $x \to \infty$, $\nabla f(x) \sim \log(2 x)$ and $f(x) \sim x\log(2x)$, hence we observe that for any $p > 0$, 

\begin{equation}
\lim_{x\to \infty} \frac{\log(x + \sqrt{1 + x^2})^p}{f(x)} \to 0.
\end{equation}

This proves the function $f$ satisfies no classical power-type \L{}ojasiewicz inequality, i.e $\alpha(s)$  cannot be of the form $(cs)^{\frac{1}{p}}$. Instead we will show that $f$ satisfies a \kinfPLI\ with  $\alpha(s) = \frac{1}{2}(\log(1 + s))$. First notice that $\log(x + \sqrt{1 + x^2})$ is an odd function  and $f(x)$ is an even function. Hence it suffices to show that for $x > 0$,

\begin{equation}\label{eq:log-K-inf}
    \nabla f(x) \geq \frac{1}{2}\log(1 + f(x)) \iff e^{2\nabla f(x)} \ge 1 + f(x).
\end{equation}

Using that $\log(x + \sqrt{1 + x^2}) \le x$ for $x > 0$ allows us to obtain that $f(x) \le x^2$  for $x > 0$ and we get that $1 + f(x) \le 1 + x^2$. Furthermore, $e^{2\nabla f(x)} = (x + \sqrt{1 + x^2})^2 \ge 1 + x^2$, which combining with \eqref{eq:log-K-inf} shows that $\alpha(s) = \frac{1}{2}(\log(1 + s))$ is clearly a \kinfPLI\ for the function $f$. Lastly, since $f$ is coercive and strongly convex around the minimizer $x^{\ast} = 0$ it satisfies the assumption of Theorem \ref{prop:global} and the gradient flow is contractive under the metric constructed there. We note that this $f$ is convex and the role of this example is to illustrate a comparison function $\alpha$ that is not of power type, while the next section provides a nonconvex example.

\subsection{Example of a nonconvex objective satisfying the assumptions of Theorem \ref{prop:above-PL-regime}}\label{sec:example-thm3}

Consider the coercive $C^{\infty}$ function:

\begin{equation}
    f(x) = \frac{x^4}{4} - \frac{x^2}{2} + \frac{3}{4}\sin(x^2).
\end{equation}

Its derivative equals

\begin{equation}
\begin{aligned}
    & f'(x) = x^3 - x + \frac{3}{2}x\cos(x^2) = xq(x^2), \\
    & \text{ where } q(t) = \left(t - 1 + \frac{3}{2}\cos(t)\right).
\end{aligned}
\end{equation}

\textit{Unique critical point:} For $t \ge 0$ we show that $q(t) > 0$. First we compute $q'$

\begin{equation}
    q'(t) = 1 - \frac{3}{2}\sin(t),
\end{equation}
and we notice that the critical points of $q$ satisfy
\begin{equation}
\begin{aligned}
    & \sin t = \frac{2}{3} \Rightarrow t= \arcsin\left(\frac{2}{3}\right) + 2k\pi,\\
    & \quad \qquad \text{ or } t =\pi - \arcsin\left(\frac{2}{3}\right) + 2k\pi.
\end{aligned}    
\end{equation}

Let $\theta_0 = \arcsin(\frac{2}{3})$. At $t = \theta_0 + 2k\pi$ we have that $\cos(t) > 0$, therefore $q''(t) = -\frac{3}{2}\cos(t) < 0$ proves these points are local maxima. For $t_k = \pi - \theta_0 + 2k\pi$ we have $\cos(t_k) = -\frac{\sqrt{5}}{3},$ so $q''(t_k) = \frac{\sqrt{5}}{2}$ and therefore these are local minima of $q$. Nonetheless, $q(\pi - \theta_0) \approx 0.294$, therefore, leveraging that at $t_k = \pi - \arcsin\left(\frac{2}{3}\right) + 2k\pi$,  $q(t_k) = q(\pi - \arcsin\left(\frac{2}{3}\right)) + 2k\pi$, we conclude $q(t_k)>0$. Since $q(0) = \frac{1}{2}$ and the local minima satisfy $q(t) > 0.29$ we conclude that the local maxima also satisfy $q(t)> 0.29$  and therefore $q(t)> 0.29$ for $t \ge 0$. Recalling that $f'(x) = xq(x^2)$ allows us to conclude that $x = 0$ is the unique stationary point of $f$.

\textit{Local strong convexity and global weak convexity:} Noting that

\begin{equation}
    f''(x) = 3x^2 - 1 - 3x^2\sin(x^2) + \frac{3}{2}\cos(x^2)
\end{equation}
we observe that  $f''(0) =  - 1 + \frac{3}{2} = \frac{1}{2}$, hence $x = 0$ is a global minimizer where $f$ is locally strongly convex. Furthermore,

\begin{equation}
\begin{aligned}
    & f''(x) = 3x^2 - 1 - 3x^2\sin(x^2) + \frac{3}{2}\cos(x^2) \\
    & = 3x^2\left(1 - \sin(x^2)\right) - 1 + \frac{3}{2}\cos(x^2),
\end{aligned}
\end{equation}
where the first term above is clearly greater than or equal to zero and the remainder is greater than or equal to $-\frac{5}{2}$, which implies the objective is $\frac{5}{2}$-weakly convex. At $x = \sqrt{2k\pi + \frac{\pi}{2}}$, $f''(x) = -1$, so $f$ is nonconvex on an unbounded set. 

\textit{Strong \PLI:} We seek to show that $|\nabla f(x)| \ge \beta(f(x) - 0)^{\frac{3}{4}}$ for some $\beta > 0$. This inequality, replacing $\beta$ with $\beta_u > 0$ can be rewritten as

\begin{equation}
\begin{aligned}
    & \frac{|\nabla f(x)|}{(\frac{x^4}{4} - \frac{x^2}{2} + \frac{3}{4}\sin(x^2))^{\frac{3}{4}}} = \frac{|x|q(x^2)}{(\frac{x^4}{4} - \frac{x^2}{2} + \frac{3}{4}\sin(x^2))^{\frac{3}{4}}} \ge \beta_u.
\end{aligned}
\end{equation}

In the limit as $|x| \to \infty$, the left-hand side converges to $4^{\frac{3}{4}} = 2\sqrt{2}$, which implies there exists $N$ such that for $|x| > N$ the above holds for any $\beta_u < 2\sqrt{2}$. Locally around the global minimizer $x^{\ast}$, $f$ is strongly convex hence there exists sufficiently small $N_{\ast} \in (0,1)$ such that for $|x|<N_{\ast}$, $f$ satisfies a \PLI. A \PLI\ near the stationary point implies $f$ satisfies a local \kcPLI\ in the same region $|x|<N_{\ast}$ and with $\alpha(s) = \beta_l(s)^{\frac{3}{4}}$ for some $\beta_l > 0$. Neither $f(x)-f(0)$ nor $|\nabla f(x)|$ equals zero in the compact region $N_{\ast} \le |x| \le N$,  hence defining

\begin{equation}
    \beta_m = \frac{\min_{N_{\ast} \le |x| \le N} |\nabla f(x) |}{\max_{N_{\ast} \le |x| \le N} {(f(x) - f(0))^{\frac{3}{4}}}} > 0,
\end{equation}
implies for $N_{\ast} \le |x| \le N$, $|\nabla f(x)| \ge \beta_m(f(x) - 0)^{\frac{3}{4}}$ holds. Choosing $\beta = \min\{\beta_l, \beta_m, \beta_u\}$ proves that $\alpha(s) = \beta(s)^{\frac{3}{4}}$ is a \kcPLI\ for $f$, which clearly also satisfies the strong-\PLI\ condition.

\section{Multiple Equilibria}

Objectives satisfying a \gPLI\ need not admit a unique minimizer, and one of the strengths of the Lyapunov argument is precisely that it extends to sets of minimizers. In this setting, however, the contraction constructions of the previous section cannot be applied directly. Indeed, a global contraction metric induces a contraction of the flow with respect to the associated distance and, by the Banach Contraction Mapping Principle, is incompatible with the existence of multiple equilibria \cite{Aylward_2008}. Likewise, incremental global asymptotic stability, in the sense of \cite{forni2013differentiallyapunovframeworkcontraction,angeli2002incremental}, cannot hold in the presence of distinct equilibria.

We therefore turn to the weaker notion of incremental stability, which is compatible with nonunique equilibria while still imposing strong restrictions on the asymptotic behavior of the flow. Under the assumptions introduced below, we characterize global semistability, as stated in Definition \ref{def:semistability}, by the existence of a proper compatible metric under which the system is incrementally stable and the equilibrium set is locally asymptotically stable. This characterization provides an alternative proof of the continuity of the endpoint map established for semistable systems in \cite{bhat2010arclength}. The incremental estimate also yields a modulus of continuity in the chosen metric. These properties capture the asymptotic behavior exhibited by \gPLI\ gradient flows and provide a natural framework for extending such conclusions to broader classes of objectives.

To motivate the need for additional incremental structure, we begin with an example showing that generalized gradient dominance alone is insufficient to guarantee convergence of the state to the minimizing set. Specifically, we construct a gradient flow satisfying a global \kinfPLI\ condition for which the objective converges to its minimum value while the corresponding trajectory remains at a fixed positive distance from the set of global minimizers. Consider:

\begin{equation}
    f(x,y)=e^{-x}q(y),
    \qquad
    q(y)=y^2e^{-2(y-1)},
\end{equation}
defined on $\mathbb R^2$, we will show that $f(x(t),y(t)) \to f^{\ast}$ however the trajectories are at a fixed distance away from the global minimizer set in the limit. Since $q(y)\ge 0$ and $q(y)=0$ if and only if $y=0$, we have 
\begin{equation}
    f^\ast=0,
    \qquad
    S:=\arg \min f
    =
    \{(x,0):x\in\mathbb R\}.
\end{equation}
The gradient is
\begin{equation}
    \nabla f(x,y)
    =
    \begin{bmatrix}
        -e^{-x}q(y)\\[2mm]
        e^{-x}q'(y)
    \end{bmatrix}
    =
    \begin{bmatrix}
        -f(x,y)\\[2mm]
        e^{-x}q'(y)
    \end{bmatrix}.
\end{equation}
Therefore
\begin{equation}
    \|\nabla f(x,y)\|
    \ge
    |f(x,y)|.
\end{equation}
Since $f^\ast=0$ and $f(x,y)\ge0$, this implies
\begin{equation}
    \|\nabla f(x,y)\|
    \ge
    f(x,y)-f^\ast.
\end{equation}
Thus $f$ satisfies a \kinfPLI\ with $\alpha(r)=r.$ Now consider the gradient flow of $f$ given by:
\begin{equation}
    \dot x=e^{-x}q(y),
    \qquad
    \dot y=-e^{-x}q'(y).
\end{equation}
Since
\begin{equation}
    q'(y)=2y(1-y)e^{-2(y-1)},
\end{equation}
we have $q'(1)=0$ and $q(1)=1$. Hence the line $y=1$ is invariant.
Starting from an initial condition $(x_0,1)$, the dynamics reduce to
\begin{equation}
    \dot y=0,
    \qquad
    \dot x=e^{-x}.
\end{equation}
Thus
\begin{equation}
    y(t)=1,
    \qquad
    x(t)=\log(t + e^{x_0})
\end{equation}
so
\begin{equation}
    x(t)\to+\infty.
\end{equation}

Along this trajectory,
\begin{equation}
    f(x(t),1)
    =
    e^{-x(t)}
    =
    \frac{1}{t+e^{x_0}}
    \to 0.
\end{equation}
Therefore the Lyapunov function $V(x,y)=f(x,y)-f^\ast=f(x,y)$ converges to zero. However,
\begin{equation}
    \operatorname{dist}\big((x(t),y(t)),S\big)
    =
    \operatorname{dist}\big((x(t),1),S\big)
    =
    1
\end{equation}
for all $t\ge0$. Hence the trajectory does not converge to the minimizer set in state, even though $f$ satisfies a global \kinfPLI. This shows that a \kinfPLI\ implies convergence of the objective value, but it does not imply
\begin{equation}
    \operatorname{dist}(x(t),S) \to 0.
\end{equation}

\begin{figure}[t]
    \centering
    \includegraphics[width=0.8\columnwidth]{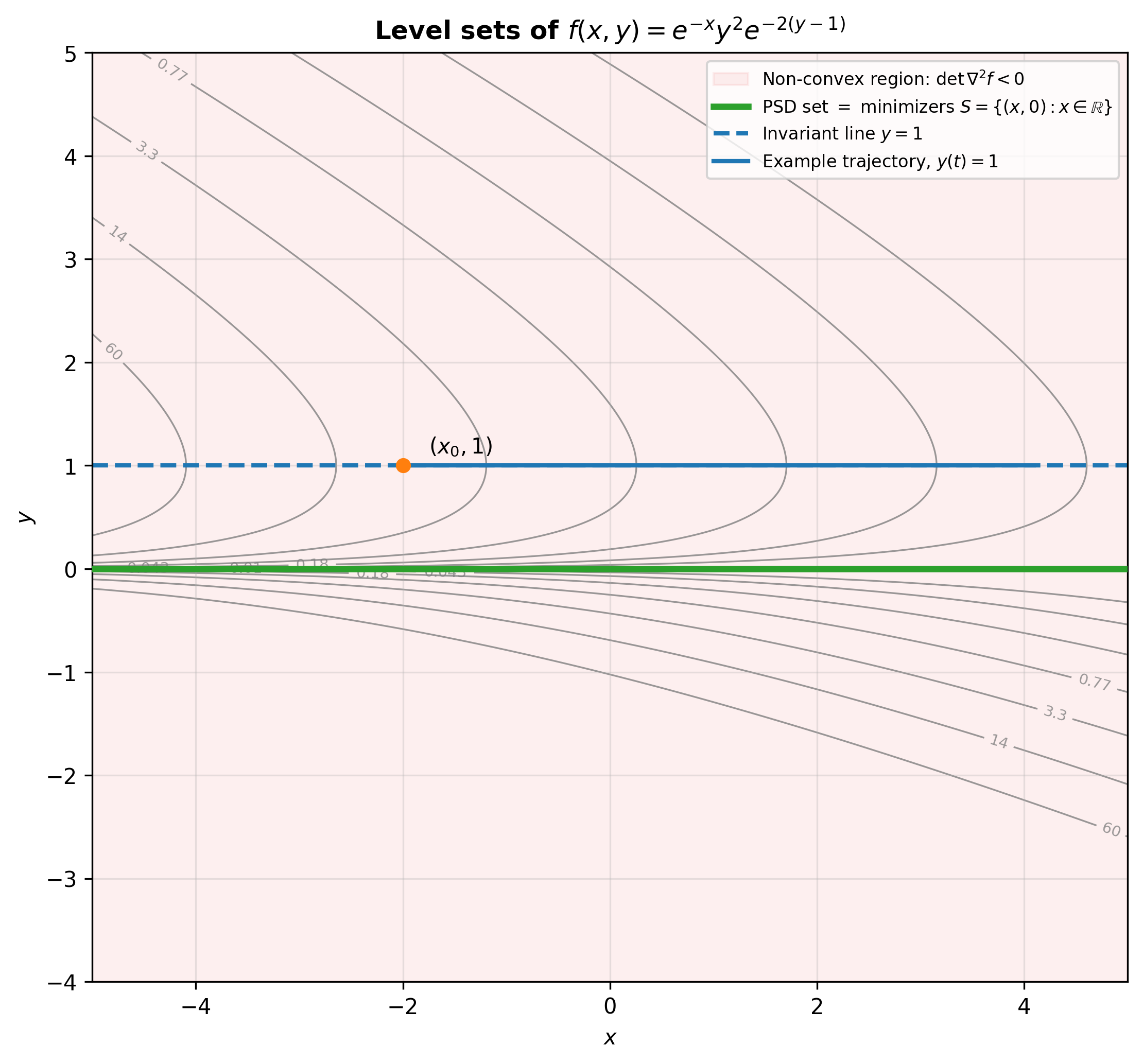}
    \caption{Schematic illustration of a trajectory of the gradient flow of the \kinfPLI\ function described in the beginning of the section. The level sets of the function are the gray lines. The blue line illustrates an invariant of the gradient flow with the resulting trajectory going towards $\infty$ while being equidistant to the set of minimizers $S$.}
    \label{fig:gf-escape}
\end{figure}

In \cite{Wensing_2020} it was shown that if the system is weakly contractive under a Riemannian metric then all of the pathologies involving possible non-convergence of gradient flow are avoided. Namely, for any given $x$ there always exists $x^{\ast} \in \arg\min f$ such that: 

\begin{equation}
    \|x(t) - x^{\ast}\| \to 0.
\end{equation}

Our main goal in this section is to generalize the above to incrementally stable systems under more general metrics and to show that, under an additional Lyapunov stability condition on each equilibrium point, every ``well behaved'' gradient system is also incrementally stable.

\subsection{Incrementally Stable Systems}

\begin{assumption}
    Throughout this subsection, unless otherwise specified, incremental stability is understood with respect to a proper metric $d$ on $\mathbb{R}^n$ inducing the Euclidean topology, restricted to $G$, and set distances $\dist$ are measured with the same metric $d$.
\end{assumption}

There has been progress in \cite{Duvall,jafarpour2021weak} in characterizing the qualitative behavior of $\omega$-limit sets when the system is non-expansive. Namely, \cite{Duvall} showed that if a system is non-expansive with respect to a strictly convex norm and has at least one bounded solution, then the $\omega$-limit set of every trajectory is also an $\omega$-limit set of a linear time-invariant system. \cite{jafarpour2021weak} showed that either trajectories of non-expansive systems in $\ell_{1}$ or $\ell_{\infty}$ are bounded and converge to a point or are unbounded. Another structural result for more general metrics came for the restricted setting of gradient flows in \cite{Wensing_2020} which showed that any weakly-contractive gradient flow possesses a path-connected set of equilibrium points. 

A different behavior of IS systems was established in \cite{centorrino2024weakly}. There, it was shown that for globally non-expansive dynamics, the existence of a unique locally exponentially stable equilibrium upgrades this local property to a global convergence result. Specifically, trajectories converge toward the equilibrium at least linearly outside a ball of radius R, while convergence is exponential inside the ball. This provides a local-to-global stability result for non-expansive dynamics.

In the following we generalize some of the results above to IS systems by:

\begin{enumerate}
    \item proving topological properties of $\omega$-limit sets;
    \item proving a local-to-global result for asymptotically stable sets;
    \item and under a local asymptotic stability assumption, proving the equivalence of IS and global semistability.
\end{enumerate}

\begin{lemma}\label{lemma:local-to-global} 
Assume $G$ is connected and forward invariant under \eqref{eq:ode}. Suppose \eqref{eq:ode} is IS on $G$, and suppose $S\subseteq G$ is locally asymptotically stable relative to $G$. Then $B_S=G$. Hence $S$ is globally asymptotically stable relative to $G$. 
\end{lemma}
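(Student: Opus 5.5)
Since $G$ is connected, the plan is to show that $B_S$ is nonempty, open and closed relative to $G$; this gives $B_S=G$. Lyapunov stability of $S$ is already part of local asymptotic stability, so global attractivity then yields global asymptotic stability. Nonemptiness is immediate: $S\subseteq B_S$, because $S$ is nonempty and each $s\in S$ has $\dist(s,S)=0<\rho$, so $\dist(\varphi(s,t),S)\to0$.

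\emph{Key estimate.} Fix $x\in G$ and $y\in B_S$. For every $s\in S$ and $t\ge0$, the triangle inequality and \eqref{eq:I.S} give
\begin{equation}
\dist(\varphi(x,t),S)\le d(\varphi(x,t),\varphi(y,t))+\dist(\varphi(y,t),S)\le \sigma(d(x,y))+\dist(\varphi(y,t),S).
\end{equation}
Since $y\in B_S$, we get $\limsup_{t\to\infty}\dist(\varphi(x,t),S)\le\sigma(d(x,y))$.

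\emph{Openness.} Take $y\in B_S$ and choose $\eta$ from Lyapunov stability of $S$ with $\delta=\rho$. Let $x\in G$ satisfy $\sigma(d(x,y))<\eta/2$, which is a $d$-ball around $y$ and hence a Euclidean neighborhood, because $d$ is compatible. By the key estimate there is $T$ with $\dist(\varphi(x,T),S)<\eta\le\rho$. Forward invariance gives $\varphi(x,T)\in G$, so the attractivity clause of local asymptotic stability applied to $\varphi(x,T)$ shows $\dist(\varphi(x,t),S)\to0$, i.e.\ $x\in B_S$. Only the radius $\rho$ is really needed here; Lyapunov stability simply supplies a convenient threshold.

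\emph{Closedness.} Let $x\in G$ be a limit of points $y_k\in B_S$. Then $d(x,y_k)\to0$, and since $\sigma$ is class $\mathcal K$ and hence continuous at $0$, we can pick $k$ with $\sigma(d(x,y_k))<\rho/2$. The key estimate gives some $T$ with $\dist(\varphi(x,T),S)<\rho$, and attractivity from $\varphi(x,T)$ then gives $x\in B_S$.

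The main obstacle I expect is bookkeeping rather than substance. The same metric $d$ must be used both for $\dist$ in the stability definitions and in the IS estimate, as the standing assumption stipulates. Topological closedness must be taken relative to $G$, which is legitimate because $d$ induces the Euclidean topology. One must also make sure that $\varphi(x,T)$ stays in $G$, which holds by forward invariance, so that the basin definition applies. Connectedness of $G$ then finishes the proof.
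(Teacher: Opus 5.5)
Your proof is correct and takes essentially the same approach as the paper. The paper argues by contradiction: it takes a relative boundary point $y$ of the (open) basin $B_S$, notes that $\dist(\varphi(y,t),S)\ge\rho$ for all $t$, and uses IS with the triangle inequality at a nearby $x\in B_S$ with $\sigma(d(x,y))\le\rho/2$ to reach a contradiction. That is your closedness step in contrapositive form, and your version is slightly more self-contained, since you also justify that $B_S$ is open and nonempty ($S\subseteq B_S$), which the paper takes for granted.
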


The proof is provided in the appendix. The basic idea is to prove the above via contradiction. Indeed, suppose $B_S \neq G$, then evaluate points at a distance $\epsilon$ away from each other, such that one lies inside $B_S$ and the other on the boundary of $B_S$. Since one lies inside $B_S$, it must converge to $S$ while the other must stay outside $B_S$. This we show is a violation of the incremental stability condition. 


To prove that $S$ is (path-)connected we assume that $S$ consists of equilibria of the system. First we show that the endpoint map is well defined and continuous:

\begin{lemma}\label{lem:omega-limit-singleton}
Assume $G$ is forward invariant under \eqref{eq:ode} and \eqref{eq:ode} is IS on $G$. Let $S \subseteq G$ be a nonempty set of equilibria such that, for every $y \in G$, the $\omega$-limit set $\omega(y)$ contains a point of $S$; this holds if $G$ is closed and connected and $S$ is closed relative to $G$ and a locally asymptotically stable set of equilibria relative to $G$ (cf. Lemma \ref{lemma:local-to-global}). Then the endpoint map $P : G \to S$ is well defined and satisfies $d(P(x),P(y)) \le \sigma(d(x,y))$ for all $x,y \in G$, so $P$ is uniformly continuous with respect to $d$.
\end{lemma}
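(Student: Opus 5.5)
The argument rests on one observation: if $\omega(y)$ contains an equilibrium $s\in S$, incremental stability forces the whole trajectory to converge to $s$. Continuity of $P$ then follows by passing to the limit in the incremental estimate.

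\emph{Step 1 (convergence to a single point).} Fix $y\in G$. By hypothesis there are $s\in S\cap\omega(y)$ and times $t_k\to\infty$ with $d(\varphi(y,t_k),s)\to 0$. Since $s$ is an equilibrium, $\varphi(s,t)=s$ for all $t$. Apply \eqref{eq:I.S} with initial conditions $\varphi(y,t_k)$ and $s$, both in $G$ by forward invariance. Using the semigroup property, for all $t\ge t_k$,
\begin{equation}
d(\varphi(y,t),s)=d\big(\varphi(\varphi(y,t_k),t-t_k),\varphi(s,t-t_k)\big)\le \sigma\big(d(\varphi(y,t_k),s)\big).
\end{equation}
The right-hand side tends to $0$ as $k\to\infty$ because $\sigma$ is continuous with $\sigma(0)=0$. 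Hence $\varphi(y,t)\to s$ in $d$, and therefore in the Euclidean topology, since $d$ is compatible. So $P(y):=\lim_{t\to\infty}\varphi(y,t)=s$ is well defined and lies in $S$.

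\emph{Step 2 (modulus of continuity).} For $x,y\in G$, \eqref{eq:I.S} gives $d(\varphi(x,t),\varphi(y,t))\le\sigma(d(x,y))$ for every $t\ge0$. Since $d$ is a continuous function of its arguments, letting $t\to\infty$ yields $d(P(x),P(y))\le\sigma(d(x,y))$. Because $\sigma\in\mathcal K$ is continuous at $0$, $P$ is uniformly continuous with respect to $d$.

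\emph{Step 3 (the sufficient condition).} Here $G$ is closed and connected, and $S$ is a closed, locally asymptotically stable set of equilibria. Lemma~\ref{lemma:local-to-global} gives $B_S=G$, so $\dist(\varphi(y,t),S)\to0$ for every $y$. We must produce a point of $S$ in $\omega(y)$, and this is where the main obstacle lies: boundedness of the trajectory is not assumed.

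Since $\dist(\varphi(y,t),S)\to 0$ and $d$ is proper, I would choose $s_t\in S$ with $d(\varphi(y,t),s_t)\to0$. The infimum defining $\dist$ is attained, because $S$ is closed and balls of $d$ are compact. Then for $t'\ge t$, applying \eqref{eq:I.S} to the pair $\varphi(y,t),s_t$ gives
\begin{equation}
d(\varphi(y,t'),s_t)\le\sigma\big(d(\varphi(y,t),s_t)\big).
\end{equation}
So the tail of the trajectory after time $t$ stays in a fixed $d$-ball around $s_t$. In particular it is bounded, and by properness of $d$ it is precompact. A limit point exists, and it lies in $S$ because $\dist(\varphi(y,t),S)\to0$ and $S$ is closed relative to the closed set $G$. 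This gives the required point of $\omega(y)\cap S$, and Steps 1 and 2 apply.
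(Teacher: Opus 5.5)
Your proof is correct, and Steps 1 and 2 are the paper's argument: IS applied against the equilibrium $s\in\omega(y)\cap S$ from a late time $t_k$ forces convergence to $s$, and letting $t\to\infty$ in \eqref{eq:I.S} gives $d(P(x),P(y))\le\sigma(d(x,y))$. Your Step 3 verifies the sufficient condition that the paper only states parenthetically, and it is sound; however, the boundedness you flag as the main obstacle is immediate, as in the paper's opening step: for any fixed $s\in S$, $d(\varphi(y,t),s)=d(\varphi(y,t),\varphi(s,t))\le\sigma(d(y,s))$ for all $t\ge0$, so properness gives precompactness without the moving points $s_t$.
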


The proof is provided in the appendix. Combining the previous results we reach the following equivalence between IS systems and global semistability:

\begin{theorem}\label{thm:equivalence-limit-map-I.S}
Let $G$ be a closed (path-)connected set that is forward invariant under \eqref{eq:ode}. Let $S$ denote the equilibrium set of \eqref{eq:ode} and assume it is nonempty. Then the following two statements are equivalent:

\begin{enumerate}
    \item There exists a proper compatible metric $d$ such that the system is IS with respect to $d$ and the equilibrium set $S$ is locally asymptotically stable relative to $G$, with set distances measured using $d$.
    \item For every $x\in G$, the endpoint map $P(x)$ exists and belongs to $S$, and each equilibrium of $S$ is Lyapunov stable relative to $G$, that is, the system is globally semistable relative to $G$.
\end{enumerate}

Furthermore the endpoint map $P$ is continuous, and $S$ is (path-)connected.
\end{theorem}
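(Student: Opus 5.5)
The plan has two directions plus the two addenda: continuity of $P$ and (path-)connectedness of $S$.

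\emph{(1)$\Rightarrow$(2).} Since $G$ is closed and connected and $S$ is locally asymptotically stable relative to $G$, Lemma~\ref{lemma:local-to-global} gives $B_S=G$. Hence for every $y\in G$ we have $\dist(\varphi(y,t),S)\to0$, and the trajectory is bounded because $d$ is proper and $S$ is closed. Then $\omega(y)$ is nonempty and contained in $S$, so Lemma~\ref{lem:omega-limit-singleton} applies. It shows that $P(x)$ exists, lies in $S$, and satisfies $d(P(x),P(y))\le\sigma(d(x,y))$. Lyapunov stability of each equilibrium $s\in S$ follows directly from IS: since $\varphi(s,t)=s$, we get $d(\varphi(x,t),s)\le\sigma(d(x,s))$. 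Choosing $\eta$ with $\sigma(\eta)<\delta$ then works. So the system is globally semistable.

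\emph{(2)$\Rightarrow$(1).} This is the main obstacle, since we must construct a metric. The natural candidate is
\[
d(x,y):=\sup_{t\ge0}\|\varphi(x,t)-\varphi(y,t)\|_2 .
\]
It is nonexpansive along the flow by construction, so IS holds with $\sigma=\mathrm{id}$. It dominates the Euclidean distance, so it is proper provided its closed balls are closed in the Euclidean topology. The key step is compatibility: we need $d(x_k,x)\to0$ whenever $x_k\to x$ in the Euclidean sense. This is where semistability is used.
\begin{itemize}
\item Fix $\varepsilon>0$. Lyapunov stability of $P(x)$ gives $\eta>0$ such that any trajectory entering $B_\eta(P(x))$ stays within $\varepsilon/2$ of $P(x)$.
\item Pick $T$ with $\varphi(x,T)\in B_{\eta/2}(P(x))$.
\item By continuous dependence on $[0,T]$, for $k$ large we have $\|\varphi(x_k,t)-\varphi(x,t)\|<\min(\varepsilon,\eta/2)$ on $[0,T]$. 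So $\varphi(x_k,T)\in B_\eta(P(x))$, and afterwards both trajectories remain within $\varepsilon/2$ of $P(x)$.
\end{itemize}
This yields $d(x_k,x)\le\varepsilon$. Closedness of $d$-balls follows because $d$ is a supremum of continuous functions, hence lower semicontinuous. Local asymptotic stability of $S$ in $d$ also follows:
\begin{itemize}
\item Attractivity holds because $d(\varphi(x,t),P(x))=\sup_{s\ge t}\|\varphi(x,s)-P(x)\|\to0$.
\item Lyapunov stability of the set $S$ follows from the same estimate, using $d(x,s)=\sup_t\|\varphi(x,t)-s\|$ for an equilibrium $s$. If $\dist(x,S)<\eta$, then $d(x,s)<\eta$ for some $s$, so $d(\varphi(x,t),s)<\eta$ for all $t$.
\end{itemize}

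\emph{Addenda.} Continuity of $P$ comes from the modulus $\sigma$ in Lemma~\ref{lem:omega-limit-singleton} together with compatibility of $d$. Since $P$ restricted to $S$ is the identity, $P:G\to S$ is a continuous surjection. The image of a connected (resp.\ path-connected) set under a continuous map is connected (resp.\ path-connected), so $S$ inherits the property.
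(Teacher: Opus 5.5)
Your proposal is correct and follows essentially the same route as the paper's proof: Lemma~\ref{lemma:local-to-global}, Lemma~\ref{lem:omega-limit-singleton} and IS against a fixed equilibrium for $(1)\Rightarrow(2)$; the same metric $d(x,y)=\sup_{t\ge0}\|\varphi(x,t)-\varphi(y,t)\|$ with the same Lyapunov-stability-plus-finite-horizon compatibility argument for $(2)\Rightarrow(1)$; and $P(G)=S$ for (path-)connectedness. Your lower-semicontinuity argument for closedness of $d$-balls is a harmless variant of the paper's appeal to compatibility, though it still needs $G$ closed.

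One small correction: in $(1)\Rightarrow(2)$, boundedness of $\varphi(y,\cdot)$ does not follow from properness of $d$ and closedness of $S$, since $\dist(\varphi(y,t),S)\to0$ permits escape to infinity along an unbounded $S$. It follows instead from IS against any fixed $s\in S$, $d(\varphi(y,t),s)\le\sigma(d(y,s))$, together with properness of $d$, exactly as in the first step of the proof of Lemma~\ref{lem:omega-limit-singleton}.
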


The proof of the above is provided in the appendix. Without local asymptotic stability of the set $S$ this appears to be a harder question. In \cite{skedlak2007structure} it was shown that if $\sigma(r) = lr$, for some finite constant $l$ which is close enough to $1$, and $d(x,y) = \|x-y\|$, then one can show that the set of equilibrium points is non-empty and a retract of $G$. For arbitrary $l$, however, which is a restricted class of IS systems, the question appears to be open.

We can also generalize Proposition 3 of \cite{Wensing_2020} to show that, for an IS system admitting a Lyapunov function, LaSalle-type convergence to the set of equilibria is upgraded to convergence of each trajectory to a single equilibrium, with a continuous endpoint map. This upgrades convergence in the objective value to convergence in the state.

\begin{lemma}\label{lemma:la-Salle-I.S}
    Suppose \eqref{eq:ode} is IS on a forward-invariant (path-)connected set $G$ and one bounded trajectory exists. Assume further that either $G$ is closed, or every positive orbit starting in $G$ has compact closure contained in $G$. Further assume for \eqref{eq:ode} there exists a Lyapunov function $V(x)$, continuously differentiable on a neighborhood of $G$, satisfying:
    \begin{equation}
        \dot{V} = \frac{\partial V(x)}{\partial x}g(x) \le 0, \;\; \forall \; x \in G
    \end{equation}
    Then all trajectories converge to the largest invariant set contained in $\{x \in G: \dot{V}(x) = 0\}$. Furthermore, if this largest invariant set is the set of equilibria of \eqref{eq:ode}, then the endpoint map $P$ exists and is continuous.
\end{lemma}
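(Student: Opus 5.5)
The plan is to first show that every trajectory is bounded, then apply LaSalle's invariance principle, and finally use Lemma~\ref{lem:omega-limit-singleton} to obtain the endpoint map.

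\textbf{Boundedness.} Let $z_0\in G$ be the initial condition of the given bounded trajectory. For any $x\in G$, incremental stability gives
\begin{equation}
d(\varphi(x,t),\varphi(z_0,t))\le\sigma(d(x,z_0))
\end{equation}
for all $t\ge0$. The orbit of $z_0$ is bounded in the Euclidean sense. Since $d$ is compatible and proper, a Euclidean-bounded set lies in a compact set and hence has finite $d$-diameter. Therefore $\varphi(x,t)$ stays in a $d$-ball of finite radius around a fixed point, and properness makes that ball compact. So every positive orbit is precompact. If $G$ is closed, the closure of the orbit lies in $G$; otherwise this is exactly the alternative hypothesis. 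Either way, $\omega(x)$ is a nonempty, compact, invariant subset of $G$, and $\dist(\varphi(x,t),\omega(x))\to0$.

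\textbf{LaSalle.} Along the orbit, $V(\varphi(x,t))$ is nonincreasing. It is bounded below because $V$ is continuous on the compact orbit closure, so it converges to some value $c$. By continuity, $V\equiv c$ on $\omega(x)$. Since $\omega(x)$ is invariant, this forces $\dot V=0$ there. Hence $\omega(x)$ lies in the largest invariant set $E$ contained in $\{\dot V=0\}$, and every trajectory converges to $E$. This is the standard LaSalle argument, and the only new input is the boundedness obtained from IS.

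\textbf{Endpoint map.} Now assume $E$ equals the equilibrium set $S$. Then for every $y\in G$, $\omega(y)$ is a nonempty subset of $S$, so it contains a point of $S$. This is precisely the hypothesis of Lemma~\ref{lem:omega-limit-singleton}, applied with $S$ the equilibrium set; it is nonempty because $\omega(z_0)\neq\emptyset$. That lemma yields that $P$ is well defined and satisfies $d(P(x),P(y))\le\sigma(d(x,y))$. Hence $P$ is uniformly continuous in $d$, and continuous in the Euclidean topology because $d$ is compatible.

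If Lemma~\ref{lem:omega-limit-singleton} could not be cited directly, the key step would be showing that $\omega(x)$ is a singleton. Take an equilibrium $e\in\omega(x)$ and apply IS to the pair $(\varphi(x,t_k),e)$, where $\varphi(x,t_k)\to e$:
\begin{equation}
d(\varphi(x,t_k+s),e)\le\sigma(d(\varphi(x,t_k),e))\to0
\end{equation}
uniformly in $s\ge0$. Hence $\varphi(x,t)\to e$.

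The main obstacle I expect is the boundedness step: one has to carefully check that Euclidean boundedness of one orbit transfers through the proper compatible metric to precompactness of all orbits inside $G$. The remaining steps are routine.
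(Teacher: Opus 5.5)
Your proposal is correct and follows the paper's proof essentially step for step. Like the paper, you get boundedness of all orbits from the one bounded orbit via IS, apply LaSalle's principle on the compact orbit closure in $G$, and then invoke Lemma~\ref{lem:omega-limit-singleton} to obtain a well-defined, continuous endpoint map. Along the way you spell out details the paper leaves implicit: how compatibility and properness of $d$ turn Euclidean boundedness of one orbit into precompactness of every orbit, and the elementary argument that $V$ is constant on $\omega(x)$.
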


The lemma above highlights a significant distinction between LaSalle-type arguments, which can be used to establish asymptotic convergence to an invariant set, and the convergence behavior imposed by incremental stability. In particular, incrementally stable systems are substantially more restrictive: when convergence to equilibrium occurs, the endpoint map $P$ must exist (and in fact by Lemma \ref{lem:omega-limit-singleton} be continuous), hence the trajectories must converge to a single equilibrium point and not orbit around the set of equilibria. 

\subsection{Application to Generalized \PLI\ systems}

The following lemma is a classical result:

\begin{lemma}\label{lemma:PL-global-semistability}{\cite{rebjock2025fast}}
    Suppose $\loss$ satisfies a \gPLI. Then all trajectories converge to a limit and the equilibria are Lyapunov stable.
\end{lemma}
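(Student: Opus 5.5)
The natural route is the classical \L{}ojasiewicz finite-length argument with the desingularizing function adapted to the square-root comparison $\alpha(r)=\sqrt{\xi r}$. Let $x(t)=\varphi(x_0,t)$ and $V(x)=\loss(x)-\mloss\ge 0$. Along the flow $\frac{d}{dt}V(x(t))=-\|\nabla\loss(x(t))\|^2$. On any interval where $V(x(t))>0$, I differentiate $\sqrt{V}$:
\begin{equation*}
    \frac{d}{dt}\sqrt{V(x(t))}
    = -\frac{\|\nabla \loss(x(t))\|^2}{2\sqrt{V(x(t))}}
    \le -\frac{\sqrt{\xi}}{2}\,\|\nabla\loss(x(t))\|
    = -\frac{\sqrt{\xi}}{2}\,\|\dot x(t)\|,
\end{equation*}
where the inequality uses the \gPLI\ in the form $\|\nabla\loss\|\ge\sqrt{\xi V}$. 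If $V$ ever vanishes, then $x(t)$ is a global minimizer, hence an equilibrium, and the trajectory is constant from then on. Integrating from $0$ to $t$ therefore gives the length bound
\begin{equation*}
    \int_0^t\|\dot x(s)\|\,ds \le \frac{2}{\sqrt{\xi}}\sqrt{V(x_0)}
    \qquad\text{for all } t\ge 0 .
\end{equation*}

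Since every trajectory has finite length, $x(t)$ is Cauchy as $t\to\infty$ and converges to some $\bar x$. I also need $\bar x$ to be an equilibrium, and in fact a minimizer. From $\dot V\le -\xi V$ we get $V(x(t))\le e^{-\xi t}V(x_0)\to 0$. Continuity of $\loss$ then gives $\loss(\bar x)=\mloss$, so $\bar x$ minimizes $\loss$ and $\nabla\loss(\bar x)=0$. This shows that the endpoint map $P$ is well defined with values in the equilibrium set. That set coincides with $\arg\min\loss$, because the \gPLI\ forces every critical point to be a global minimizer.

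For Lyapunov stability, fix an equilibrium $x^\ast$, so $V(x^\ast)=0$. Using the length bound,
\begin{equation*}
    \|\varphi(x_0,t)-x^\ast\|\le \|x_0-x^\ast\| + \frac{2}{\sqrt{\xi}}\sqrt{V(x_0)} .
\end{equation*}
By continuity of $\loss$, $V(x_0)\to 0$ as $x_0\to x^\ast$. Hence, given $\delta>0$, I can pick $\eta>0$ so that $\|x_0-x^\ast\|<\eta$ implies $\|\varphi(x_0,t)-x^\ast\|<\delta$ for all $t\ge 0$. This is exactly Lyapunov stability of $x^\ast$.

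The main obstacle is technical rather than conceptual: trajectories must exist for all $t\ge0$, and the differentiation of $\sqrt{V}$ must be justified near times where $V=0$. Global existence follows from the length bound, since a solution that stays in a bounded ball cannot blow up in finite time. The square-root issue is handled by the dichotomy already noted: either $V>0$ on the whole interval, or the trajectory hits a minimizer and stays there. No coercivity is needed, because the finite-length estimate supplies boundedness directly.
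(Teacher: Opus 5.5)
Your argument is correct and complete. The $\sqrt{V}$ desingularization gives the finite-length bound, which in turn yields global existence, convergence to a minimizer, and the Lyapunov estimate $\|\varphi(x_0,t)-x^\ast\|\le\|x_0-x^\ast\|+\frac{2}{\sqrt{\xi}}\sqrt{\loss(x_0)-\mloss}$, all without coercivity. The paper does not prove this lemma itself; it cites it as classical from \cite{rebjock2025fast}, and your finite-length {\L}ojasiewicz argument is the standard argument for this classical result.
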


Now we provide an alternative way to prove that the equilibrium set is path-connected with the machinery we constructed:

\begin{corollary}\label{cor:PLI}
    If $\loss$ satisfies a \gPLI\ globally then the gradient flow is IS with respect to the proper compatible metric constructed in the proof of Theorem \ref{thm:equivalence-limit-map-I.S} and the endpoint map is continuous, hence $S$ is path-connected.
\end{corollary}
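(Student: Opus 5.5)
The plan is to verify statement~2 of Theorem~\ref{thm:equivalence-limit-map-I.S} for the gradient flow \eqref{eq:gradient-flow} with $G=\mathbb{R}^n$, and then read off the conclusions from the implication $2\Rightarrow 1$ and the final clause of that theorem.

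First we check the standing hypotheses. The set $G=\mathbb{R}^n$ is closed, path-connected and forward invariant. Under a \gPLI, every critical point is a global minimizer, since $\nabla f(x)=0$ forces $\xi(f(x)-\mloss)\le 0$. Hence the equilibrium set equals $S=\arg\min f$, which is nonempty because $f$ attains its minimum. Global existence of solutions also needs a word: along trajectories $f$ is nonincreasing, and the \gPLI\ gives
\[
\frac{d}{dt}\sqrt{f(x(t))-\mloss}\le -\tfrac{\sqrt{\xi}}{2}\|\nabla f(x(t))\|,
\]
so the arc length is bounded by $\tfrac{2}{\sqrt{\xi}}\sqrt{f(x_0)-\mloss}$. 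Consequently trajectories stay bounded and exist for all $t\ge0$. This same length estimate is the mechanism behind Lemma~\ref{lemma:PL-global-semistability}.

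Next, Lemma~\ref{lemma:PL-global-semistability} directly supplies statement~2. Every trajectory converges to a limit, and that limit is an equilibrium by continuity of $\nabla f$, so it lies in $S$. Every equilibrium is Lyapunov stable. Therefore the flow is globally semistable relative to $\mathbb{R}^n$. Applying the implication $2\Rightarrow1$ of Theorem~\ref{thm:equivalence-limit-map-I.S} yields the proper compatible metric $d$ constructed in its proof, with respect to which the gradient flow is IS and $S$ is locally asymptotically stable. The ``furthermore'' part of that theorem then gives continuity of the endpoint map $P$ and, since $\mathbb{R}^n$ is path-connected, path-connectedness of $S$.

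The main obstacle is to make sure that the Lyapunov stability in Lemma~\ref{lemma:PL-global-semistability} is measured in the sense used by Definition~\ref{def:semistability}, namely in the Euclidean topology. If needed, we would reprove it quickly from the length bound. For $x^\ast\in S$ and $\|x_0-x^\ast\|<\eta$, continuity of $f$ makes $f(x_0)-\mloss$ small. The length bound then keeps $\|\varphi(x_0,t)-x^\ast\|\le \eta+\tfrac{2}{\sqrt{\xi}}\sqrt{f(x_0)-\mloss}$ for all $t\ge0$, and this is below any prescribed $\delta$ once $\eta$ is small. Since the metric $d$ in Theorem~\ref{thm:equivalence-limit-map-I.S} is compatible with the Euclidean topology, nothing else has to be checked.
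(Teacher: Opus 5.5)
Your proposal is correct and follows the paper's proof. Both invoke Lemma~\ref{lemma:PL-global-semistability} to obtain global semistability (statement~2 of Theorem~\ref{thm:equivalence-limit-map-I.S}) on $G=\mathbb{R}^n$, then apply $2\Rightarrow1$ and the final clause of that theorem. Your additional checks are sound and make explicit what the paper leaves to the cited lemma: that $S=\arg\min f$ is nonempty, that solutions exist forward in time, and that equilibria are Lyapunov stable in the Euclidean sense via the length bound $\tfrac{2}{\sqrt{\xi}}\sqrt{f(x_0)-\mloss}$.
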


\begin{proof}
    By Lemma \ref{lemma:PL-global-semistability} and Theorem \ref{thm:equivalence-limit-map-I.S} the system is IS, which implies the endpoint map is continuous and therefore the equilibrium set is path-connected.
\end{proof}

We remark that, under the stronger smoothness assumptions of \cite{boumal2026smooth}, a stronger regularity result is available: the set of global minima is a smooth manifold and the endpoint map is a smooth submersion onto the manifold; we do not claim this conclusion under the $C^2$ hypotheses used here. On the other hand, the incremental stability tools allow us to show the class of functions satisfying a \gPLI\ is only a sufficient criterion for the condition (2) in Theorem \ref{thm:equivalence-limit-map-I.S} to be satisfied. Indeed, one can consider the following class of objectives, which is neither weaker nor stronger than the \gPLI\ class.

\begin{assumption}\label{assump:4}
The function $\loss$ admits a $K$-reachable sub-level set (for a definition of $K$-reachability see \cite{contractive-with-inputs-sontag}), for some $\epsilon > 0$ and $K \ge 1$:
\begin{equation}
    \mathcal{T}_{\epsilon} = \{ x \quad | \quad f(x) \le f^* + \epsilon \}
\end{equation}
such that $\nabla^2 \loss(x) \succeq 0$ for $x \in \mathcal{T}_{\epsilon}$ (i.e., $f$ is locally convex on $\mathcal{T}_{\epsilon}$). Globally $f$ satisfies a \kcPLI.
\end{assumption}

\begin{proposition}\label{prop:continuity-of-inf}
Suppose $f$ satisfies Assumption \ref{assump:4}. Then the gradient flow is globally semistable, its endpoint map $P$ is continuous on $\mathbb{R}^n$ and hence $S$ is path-connected. Furthermore, the system is IS with respect to the proper compatible metric constructed in the proof of Theorem \ref{thm:equivalence-limit-map-I.S}.
\end{proposition}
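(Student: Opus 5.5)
The plan is to verify statement (2) of Theorem~\ref{thm:equivalence-limit-map-I.S} with $G=\mathbb{R}^n$, which is closed, path-connected and forward invariant. Once global semistability is established, that theorem supplies the remaining conclusions: a proper compatible metric making the flow IS, continuity of $P$, and path-connectedness of $S$. Throughout, $S=\arg\min f$: by the \kcPLI, $\nabla f(x)=0$ forces $\alpha(f(x)-\mloss)=0$, hence $f(x)=\mloss$.

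\textbf{Step 1: every trajectory enters $\mathcal{T}_\epsilon$ in finite time.} Along the flow, $\frac{d}{dt}(f-\mloss)=-\|\nabla f\|^2\le -\alpha(f-\mloss)^2$. While $x(t)\notin\mathcal{T}_\epsilon$ we have $f(x(t))-\mloss\in[\epsilon, f(x_0)-\mloss]$. By the \kcPLI\ property $\inf_{[\epsilon,b]}\alpha>0$, so $f$ decreases at a uniform positive rate on this stretch. Hence the trajectory reaches $\mathcal{T}_\epsilon$ within time at most $(f(x_0)-f^*)/\inf_{[\epsilon,f(x_0)-f^*]}\alpha^2$. The sublevel set $\mathcal{T}_\epsilon$ is forward invariant because $f$ is nonincreasing along trajectories.

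\textbf{Step 2: non-expansiveness inside $\mathcal{T}_\epsilon$.} The Jacobian is $-\nabla^2 f\preceq 0$ on $\mathcal{T}_\epsilon$, so by Lemma~\ref{lemma:tightness-sysmmetric} we get $\mu_2(-\nabla^2 f(x))\le 0$ there. Convexity of $\mathcal{T}_\epsilon$ is not available, so the straight segment between two points need not stay inside it. This is where $K$-reachability is used. Following \cite{contractive-with-inputs-sontag}, any $x,y\in\mathcal{T}_\epsilon$ are joined by a curve in $\mathcal{T}_\epsilon$ of length at most $K\|x-y\|$. Transporting this curve by the flow, which keeps it inside the invariant set $\mathcal{T}_\epsilon$, and using $\mu_2\le 0$ along it, gives
\[
\|\varphi(x,t)-\varphi(y,t)\|\le K\|x-y\|,\qquad x,y\in\mathcal{T}_\epsilon,\ t\ge 0.
\]
So the flow restricted to $\mathcal{T}_\epsilon$ is IS with $\sigma(r)=Kr$.

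\textbf{Step 3: convergence and Lyapunov stability.} I would apply Lemma~\ref{lemma:la-Salle-I.S} on the forward invariant set $G'=\mathcal{T}_\epsilon$, with $V=f$. This set is closed. Each equilibrium is a bounded trajectory in it, and I would also take it as connected; if connectedness fails, the argument can be run on each connected component. For the gradient flow, the largest invariant set in $\{\dot V=0\}=\{\nabla f=0\}$ is exactly $S$. Precompactness of trajectories is the delicate point, since coercivity is not assumed. My first attempt is to use the IS bound: for $x\in\mathcal{T}_\epsilon$ and any $s\in S$, $\|\varphi(x,t)-s\|\le K\|x-s\|$, so every trajectory in $\mathcal{T}_\epsilon$ is bounded. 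Lemma~\ref{lemma:la-Salle-I.S} then gives that $P$ exists and is continuous on $\mathcal{T}_\epsilon$. Combined with Step~1, $P(x)=P(\varphi(x,\tau_x))$ exists for all $x\in\mathbb{R}^n$.

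For Lyapunov stability of an $s\in S$, the bound $\|\varphi(x,t)-s\|\le K\|x-s\|$ handles all $x\in\mathcal{T}_\epsilon$. Since $f$ is continuous and $f(s)=\mloss$, a small enough ball around $s$ lies in $\mathcal{T}_\epsilon$, so $s$ is Lyapunov stable relative to $\mathbb{R}^n$. This establishes statement (2), and Theorem~\ref{thm:equivalence-limit-map-I.S} finishes the proof.

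The main obstacle is Step~2. It depends on pinning down exactly what $K$-reachability provides, namely curves in $\mathcal{T}_\epsilon$ of length at most $K$ times the Euclidean distance, and on justifying the logarithmic-norm argument on a nonconvex forward-invariant set as in \cite{contractive-with-inputs-sontag}.
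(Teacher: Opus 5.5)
Your proposal is correct and is essentially the paper's proof. It has the same chain of steps: finite-time entry into $\mathcal{T}_\epsilon$ using $\inf_{[\epsilon,\,f(x_0)-f^\ast]}\alpha>0$, the $K$-Lipschitz flow estimate on $\mathcal{T}_\epsilon$ from $K$-reachability and $\mu_2(-\nabla^2 f)\le 0$ (Theorem 1 and Lemma 1 of \cite{contractive-with-inputs-sontag}), Lemma~\ref{lemma:la-Salle-I.S} on $\mathcal{T}_\epsilon$ with $V=f$ plus the semigroup property, Lyapunov stability from a ball $B_r(x^\ast)\subset\mathcal{T}_\epsilon$ with $\eta<\min\{r,\delta/K\}$, and then $(2)\Rightarrow(1)$ of Theorem~\ref{thm:equivalence-limit-map-I.S}. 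Your hedge about connectedness is unnecessary, since $K$-reachability joins every pair of points by a curve inside the set and so already makes $\mathcal{T}_\epsilon$ path-connected.
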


\begin{proof}
Firstly we claim that given an initial condition $x \in \mathbb{R}^n$ there exists a finite time $T$ such that $\varphi(x,T)\in\mathcal{T}_{\epsilon}$. Indeed, suppose by
contradiction that
\begin{equation}
f(\varphi(x,t)) \geq f^*+\epsilon,
\qquad \forall t\geq 0.
\end{equation}
Since $f$ is non-increasing along the gradient flow,
\begin{equation}
\epsilon
\leq f(\varphi(x,t))-f^*
\leq f(x)-f^*,
\qquad \forall t\geq0.
\end{equation}
By the \kcPLI\ assumption,
\begin{equation}
\tau=
\inf_{s\in[\epsilon,f(x)-f^*]}\alpha(s)>0.
\end{equation}
Therefore,
\begin{equation}
\begin{aligned}
f(\varphi(x,T))
&=
f(x)
-
\int_0^T
\|\nabla f(\varphi(x,t))\|^2\,dt \\
&\leq
f(x)-\tau^2T.
\end{aligned}
\end{equation}
For
\begin{equation}
T > \frac{f(x)-f^*-\epsilon}{\tau^2},
\end{equation}
we obtain $f(\varphi(x,T))<f^*+\epsilon$, which is a contradiction. Hence, for every initial condition $x\in\mathbb{R}^n$, there exists a finite time $T$ such that $\varphi(x,T)\in\mathcal{T}_{\epsilon}$. Moreover, since $f$ is non-increasing along the gradient flow, $\mathcal{T}_{\epsilon}$ is forward invariant.

Since $\mathcal{T}_{\epsilon}$ is $K$-reachable and
$\nabla^2 f(x)\succeq0$ for all
$x\in\mathcal{T}_{\epsilon}$, standard logarithmic norm theory
(see Theorem 1 and Lemma 1 in
\cite{contractive-with-inputs-sontag}) gives
\begin{equation}\label{eq:K-uniform-lipschitzian}
    \|\varphi(x,t)-\varphi(y,t)\|
    \leq
    K\|x-y\|,
    \quad
    \forall x,y\in\mathcal{T}_{\epsilon},\;\forall t\geq0.
\end{equation}
Hence the system is IS on $\mathcal{T}_{\epsilon}$. We can therefore apply Lemma \ref{lemma:la-Salle-I.S} to the system restricted to the closed set $\mathcal{T}_{\epsilon}$ with $V = f$, to conclude that every trajectory inside $\mathcal{T}_{\epsilon}$ converges to an equilibrium of $f$. Now let $x\in \mathbb{R}^n$. From the first part of the proof, there exists a finite $T$ such that
\begin{equation}
x_{\epsilon}=\varphi(x,T)\in\mathcal{T}_{\epsilon}.
\end{equation}
By Lemma \ref{lemma:la-Salle-I.S},
\begin{equation}
\lim_{t\to\infty}\varphi(x_{\epsilon},t)\in S.
\end{equation}
Using the semigroup property,
\begin{equation}
\varphi(x,T+t)
=
\varphi(\varphi(x,T),t)
=
\varphi(x_{\epsilon},t),
\end{equation}
and hence $P(x)= \lim_{t\to\infty}\varphi(x,t)$ exists and belongs to $S$ for any $x \in \mathbb{R}^n$. It remains to show that every equilibrium in $S$ is Lyapunov stable relative to $\mathbb{R}^n$. Let $x^*\in S$. By the \kcPLI, $f(x^{\ast}) = f^{\ast} < f^{\ast}  + \epsilon$; by continuity of $f$ this implies $x^{\ast}$ is an interior point of $\mathcal{T}_{\epsilon}$ and there exists $r>0$ such that $B_r(x^*)\subset\mathcal{T}_{\epsilon}.$ Given $\delta>0$, choose  $\eta < \min\left\{r,\frac{\delta}{K}\right\}$. Then, for any $x$ such that $\|x-x^*\|<\eta$, both $x,x^*\in\mathcal{T}_{\epsilon}$ and, since $x^*$ is an equilibrium, \eqref{eq:K-uniform-lipschitzian} gives
\begin{equation}
\|\varphi(x,t)-x^*\|
=
\|\varphi(x,t)-\varphi(x^*,t)\|
\leq
K\|x-x^*\|
<
\delta,
\end{equation}
$\forall t \geq 0$. Thus every equilibrium in $S$ is Lyapunov stable relative to $\mathbb{R}^n$. We have therefore verified condition $2$ of Theorem \ref{thm:equivalence-limit-map-I.S}. It follows that the system is IS on $\mathbb{R}^n$ with respect to the metric constructed in its proof. Furthermore, Theorem \ref{thm:equivalence-limit-map-I.S} implies that the endpoint map $P$ is continuous and that $S$ is path-connected.
\end{proof}

\subsection{Incremental Stability of Other Optimization Algorithms}

We consider two inertial frameworks for the unconstrained minimization of an objective function: the Heavy-Ball (HB) method and the Dynamical Inertial Newton (DIN) system. Our primary concern is qualitative, namely, establishing convergence of the associated continuous-time trajectories to stationary points under suitable regularity assumptions on $f$. We note that this question has already been largely tackled and we simply provide a perspective from the point of view of incremental stability which eases the analysis in some parts and, through Theorem 5, provides an alternative route to the continuity of the endpoint map.  For convergence-rate analyses of HB and its discrete-time counterpart, we refer the reader to \cite{polyak,ghadimi2015global,lessard2016analysis}. Convergence rates for continuous-time Hessian-driven inertial dynamics and the analysis of related discrete-time, first-order implementations are developed in \cite{attouch2016fast} \cite{attouch2022first}.

The HB method was introduced in \cite{polyak}; its continuous counterpart is the damped inertial dynamics
\begin{equation}\label{eq:heavy-ball-second-order}
    \ddot x + \gamma \dot x + \nabla f(x) = 0, \quad \gamma > 0 .
\end{equation}
Introducing the velocity $v = \dot x$, \eqref{eq:heavy-ball-second-order} lifts to the first-order system
\begin{equation}\label{eq:heavy-ball}
\begin{aligned}
    \dot x &= v, \\
    \dot v &= -\gamma v - \nabla f(x).
\end{aligned}
\end{equation}
On the other hand, DIN, introduced in \cite{alvarez2002second}, adds a Hessian damping term to HB:
\begin{equation}\label{eq:din-second-order}
    \ddot x + \gamma \dot x + \beta \nabla^2 f(x)\, \dot x + \nabla f(x) = 0,
    \quad \gamma > 0,\ \beta > 0.
\end{equation} 
Although \eqref{eq:din-second-order} involves second-order derivatives of $f$, it admits an equivalent first-order representation that requires gradients only. We focus on the setting where $\gamma,\beta >0$, $\gamma\beta > 1$ as it was shown in Section 6.3 of \cite{alvarez2002second} that the system then admits a first order representation given by a gradient system.  Namely the energy function is determined for $a,b>0$ as:
\begin{equation}
    \mathcal{E}(x,z) = \beta f(x) + \frac{1}{2}\|\sqrt{a}x + \sqrt{b}z\|^2
\end{equation}
and the resulting gradient system that corresponds to \eqref{eq:din-second-order} is:
\begin{equation}\label{eq:dynamic-inertial-system}
\begin{aligned}
    \dot z &= -\sqrt{ab} x - bz , \\
    \dot x &= -\beta\nabla f(x) - a x - \sqrt{ab} z,
\end{aligned}
\end{equation}
with $\beta = \frac{1}{b}$ and $\gamma = a + b$. In particular, $a = \gamma - \frac{1}{\beta} > 0$ if and only if $\gamma\beta > 1$ and $b>0$ if and only if $\beta > 0$.

The state of \eqref{eq:dynamic-inertial-system} is $(x,z)$, not the position-velocity pair that one gets when writing \eqref{eq:din-second-order} as a first order system. Writing $v=\dot x$, the second equation in \eqref{eq:dynamic-inertial-system} gives $z=-(v+\beta\nabla f(x)+ax)/\sqrt{ab}$, so the map
\begin{equation}\label{eq:DIN-H}
    H(x,v)=\Big(x,\,-\frac{v+\beta\nabla f(x)+ax}{\sqrt{ab}}\Big)
\end{equation}
is a homeomorphism of $\mathbb{R}^{2n}$ with continuous inverse $H^{-1}(x,z)=(x,\,-\sqrt{ab}\,z-\beta\nabla f(x)-ax)$. Section 6.3 of \cite{alvarez2002second} demonstrates that using $\beta=\frac{1}{b}$ and $\gamma=a+b$, $H$ maps solutions of the first-order form of \eqref{eq:din-second-order} onto solutions of \eqref{eq:dynamic-inertial-system}.

The study of asymptotic solutions of the two methods above has been widely tackled. The first results came from a local perspective: given an initial condition sufficiently close to the set of global minimizers, \cite{polyak} showed that the HB gradient-like flow converges exponentially to a global minimizer of $f$. Global convergence to stationary points for HB was proven under the assumption of bounded trajectories and analytic objectives in \cite{haraux}. In \cite{alvarez2002second} a similar result to \cite{haraux} but for the DIN was obtained: for any analytic objective the bounded trajectories of DIN converge to the set of stationary points of the cost function. Moreover, \cite{alvarez2002second} showed that under convex objectives the boundedness assumption may be removed, for HB with a convex objective admitting a minimizer, convergence of every trajectory to a minimizer was established in \cite{alvarez2000minimizing,attouch2000heavy}. The results in \cite{attouch2000heavy, alvarez2002second} were presented under general scalar functions in an appropriate Hilbert space $H$. Here we pursue a weaker result where $H = \mathbb{R}^n$ to prove the applicability of using the theory of incrementally stable systems. Whether the same results carry over general Hilbert spaces is the subject of future work. 

\begin{proposition}\label{prop:DIN}
    Let $\gamma,\beta > 0$ with $\gamma\beta>1$, and let $f$ be a $C^2$ objective that is either convex with a nonempty set of minimizers or satisfies a \gPLI. Then DIN, in the coordinates $(x,z)$ of \eqref{eq:dynamic-inertial-system},
    defines an IS system; hence, its endpoint map $P$ is well defined and continuous. In the convex case, IS holds in the Euclidean norm, and in the \gPLI\ case, it holds with respect to the metric constructed in the proof of Theorem \ref{thm:equivalence-limit-map-I.S}.
\end{proposition}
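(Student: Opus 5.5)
The plan is to exploit that \eqref{eq:dynamic-inertial-system} is itself a gradient flow, $\dot w=-\nabla\mathcal{E}(w)$ with $w=(x,z)$, since $\nabla_x\mathcal{E}=\beta\nabla f(x)+\sqrt{a}(\sqrt{a}x+\sqrt{b}z)$ and $\nabla_z\mathcal{E}=\sqrt{b}(\sqrt{a}x+\sqrt{b}z)$. Every tool we developed for gradient flows therefore applies directly to $\mathcal{E}$. Its Hessian is
\begin{equation*}
\nabla^2\mathcal{E}(x,z)=\begin{bmatrix}\beta\nabla^2 f(x)+aI & \sqrt{ab}\,I\\ \sqrt{ab}\,I & bI\end{bmatrix}
=\begin{bmatrix}\beta\nabla^2 f(x) & 0\\ 0 & 0\end{bmatrix}+\begin{bmatrix}\sqrt a I\\ \sqrt b I\end{bmatrix}\begin{bmatrix}\sqrt a I & \sqrt b I\end{bmatrix}.
\end{equation*}

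\emph{Convex case.} If $f$ is convex, both summands are positive semidefinite, so $\mathcal{E}$ is convex. By Remark~\ref{rm:inf-contracting-convex}, the flow is infinitesimally weakly contractive in $\|\cdot\|_2$ on the convex set $\mathbb{R}^{2n}$, hence non-expansive, i.e.\ IS in the Euclidean norm with $\sigma(r)=r$. A minimizer $x^\ast$ of $f$ gives the equilibrium $(x^\ast,-\sqrt{a/b}\,x^\ast)$, which minimizes $\mathcal{E}$ because $\beta f\ge\beta f^\ast$ and the quadratic term is nonnegative. So a bounded trajectory exists; by non-expansiveness every trajectory is bounded. Now apply Lemma~\ref{lemma:la-Salle-I.S} with $G=\mathbb{R}^{2n}$ (closed, path-connected) and $V=\mathcal{E}$. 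Since $\dot V=-\|\nabla\mathcal{E}\|^2$, the largest invariant set in $\{\dot V=0\}$ is exactly the equilibrium set. Hence $P$ exists and is continuous. For convex $\mathcal{E}$, critical points are minimizers, so $P$ maps into $\arg\min\mathcal{E}$, whose $x$-components are minimizers of $f$.

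\emph{\gPLI\ case.} Here I would show that $\mathcal{E}$ satisfies a \gPLI\ on $\mathbb{R}^{2n}$ and then invoke Corollary~\ref{cor:PLI} (equivalently Lemma~\ref{lemma:PL-global-semistability} together with Theorem~\ref{thm:equivalence-limit-map-I.S}). This gives IS with respect to the metric constructed there and a continuous endpoint map. Write $u=\sqrt a x+\sqrt b z$. Then $\mathcal{E}^\ast=\beta f^\ast$, attained at $u=0$, $x\in\arg\min f$, and $\mathcal{E}-\mathcal{E}^\ast=\beta(f-f^\ast)+\tfrac12\|u\|^2$. Since $\|\nabla\mathcal{E}\|^2=\|\beta\nabla f+\sqrt a u\|^2+b\|u\|^2$, the goal is
\begin{equation*}
\|\beta\nabla f(x)+\sqrt a\,u\|^2+b\|u\|^2\ \ge\ c\big(\beta(f(x)-f^\ast)+\tfrac12\|u\|^2\big).
\end{equation*}
Use $\|p+q\|^2\ge\theta\|p\|^2-\tfrac{\theta}{1-\theta}\|q\|^2$ for $\theta\in(0,1)$ with $p=\beta\nabla f$ and $q=\sqrt a u$. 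This gives the lower bound $\theta\beta^2\xi(f-f^\ast)+\big(b-\tfrac{\theta a}{1-\theta}\big)\|u\|^2$. Choosing $\theta$ small enough that $b-\theta a/(1-\theta)>0$ yields the inequality with some $c>0$, so $\mathcal{E}$ satisfies a \gPLI.

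\emph{Main obstacle.} The \gPLI\ verification is elementary; the delicate points are the hypotheses of the cited results. Theorem~\ref{thm:equivalence-limit-map-I.S} and Lemma~\ref{lemma:PL-global-semistability} need $\mathcal{E}$ to be $C^2$ and to attain its minimum, both of which hold because $f$ is $C^2$ with $\arg\min f\neq\emptyset$. They also need the equilibrium set of \eqref{eq:dynamic-inertial-system} to coincide with $\arg\min\mathcal{E}$, which the \gPLI\ guarantees since it forces every critical point to be a global minimizer. Finally, I would note via the homeomorphism $H$ in \eqref{eq:DIN-H} that the continuity of $P$ transfers to the original position-velocity coordinates as $H^{-1}\circ P\circ H$. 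IS itself, however, is only claimed in the $(x,z)$ coordinates.
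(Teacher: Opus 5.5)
Your proposal is correct and follows essentially the same route as the paper: write DIN as the gradient flow of $\mathcal{E}$, then use convexity of $\mathcal{E}$ with Remark~\ref{rm:inf-contracting-convex} and Lemma~\ref{lemma:la-Salle-I.S} in the convex case, and in the \gPLI\ case verify a \gPLI\ for $\mathcal{E}$ and invoke Corollary~\ref{cor:PLI}. The only difference is cosmetic: you get the \gPLI\ constant explicitly from the bound $\|p+q\|^2\ge\theta\|p\|^2-\frac{\theta}{1-\theta}\|q\|^2$, whereas the paper notes that $\begin{bmatrix}\beta^2 & \beta\sqrt a\\ \beta\sqrt a & a+b\end{bmatrix}$ is positive definite (determinant $\beta^2 b>0$), so it dominates $\xi_{\mathcal{E}}\,\mathrm{diag}(\beta/\xi_f,\tfrac12)$ for small $\xi_{\mathcal{E}}$.
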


\begin{proof}
    As we previously stated, DIN corresponds to a gradient system whose cost function is given by:
    \begin{equation}\label{eq:equiv-gradient-system-DIN}
        \mathcal{E}(x,z) = \beta f(x) + \frac{1}{2}\|\sqrt{a}x + \sqrt{b}z\|^2.
    \end{equation}
    If $f(x)$ is convex then so is $\mathcal{E}(x,z)$. The gradient flow of any convex $C^2$ function is incrementally stable and in fact non-expansive in the $\ell_2$ norm by Remark \ref{rm:inf-contracting-convex}. Since $f$ has a minimizer $x^{\ast}$, the point $(x^{\ast}, - \sqrt{\frac{a}{b}}x^{\ast})$ is an equilibrium of \eqref{eq:dynamic-inertial-system} and hence a bounded trajectory. System \eqref{eq:dynamic-inertial-system} defines a gradient system, so the Lyapunov function $V(x,z) = \mathcal{E}(x,z)$ satisfies the conditions of Lemma \ref{lemma:la-Salle-I.S} with the largest invariant set $\{\dot{V} = 0\}$ being the stationary points of $\mathcal{E}(x,z)$. Hence the endpoint map $P$ of \eqref{eq:dynamic-inertial-system} exists and is a continuous function. Now suppose $f$ satisfies a \gPLI; we seek to show that $\eqref{eq:equiv-gradient-system-DIN}$ satisfies a \gPLI. To that end notice that:
    \begin{equation}
    \begin{aligned}
        & \|\nabla \mathcal{E}(x,z)\|^2 = \\
        & \begin{bmatrix}
        \nabla f(x) \\
        \sqrt{a}x + \sqrt{b}z
        \end{bmatrix}^{\top}
        \begin{bmatrix}
        \beta^2 & \beta\sqrt{a} \\
        \beta\sqrt{a} & (a+b)
        \end{bmatrix}
        \otimes I
        \begin{bmatrix}
        \nabla f(x) \\
        \sqrt{a}x + \sqrt{b}z.
        \end{bmatrix}
    \end{aligned}
    \end{equation}
    The global minimizers of $\mathcal{E}(x,z)$ have value $\beta f^{\ast}$. Assume $f$ satisfies a \gPLI\ with constant $\xi_f > 0$. Then, for $\xi_{\mathcal{E}} > 0$,
    \begin{equation}
    \begin{aligned}
    & \xi_{\mathcal{E}}(\mathcal{E}(x,z)  - \mathcal{E}^{\ast}) = \xi_{\mathcal{E}}(\beta(f(x) - f^{\ast}) + \frac{1}{2}\|\sqrt{a}x + \sqrt{b}z\|^2) \\
    & \le \xi_{\mathcal{E}}(\frac{\beta}{\xi_f}\|\nabla f\|^2 + \frac{1}{2}\|\sqrt{a}x + \sqrt{b}z\|^2),
    \end{aligned}
    \end{equation}
    which can be written as the quadratic form:
    \begin{equation}
    \begin{bmatrix}
    \nabla f(x) \\
    \sqrt{a}x + \sqrt{b}z
    \end{bmatrix}^{\top}
    \begin{bmatrix}
    \frac{\xi_{\mathcal{E}}\beta}{\xi_f} & 0 \\
    0 & \frac{\xi_{\mathcal{E}}}{2}
    \end{bmatrix}
    \otimes I
    \begin{bmatrix}
    \nabla f(x) \\
    \sqrt{a}x + \sqrt{b}z
    \end{bmatrix}.
    \end{equation}
    So to prove that $\mathcal{E}(x,z)$ satisfies a \gPLI, it suffices to choose $\xi_{\mathcal{E}}$ such that:
    \begin{equation}
    \begin{bmatrix}
        \beta^2  & \beta\sqrt{a} \\
        \beta\sqrt{a} & (a+b)
    \end{bmatrix}
    - 
   \xi_{\mathcal{E}} \begin{bmatrix}
    \frac{\beta}{\xi_f} & 0 \\
    0 & \frac{1}{2}
    \end{bmatrix}
    \succeq 0,
    \end{equation}
    which is always possible since:
    \begin{equation}
    \det(
    \begin{bmatrix}
        \beta^2 & \beta\sqrt{a} \\
        \beta\sqrt{a} & (a+b)
    \end{bmatrix}) = \beta^2b> 0
    \end{equation}
    and $\beta^2 > 0$, so the above matrix is clearly positive-definite. By Corollary \ref{cor:PLI} the system is IS and $P$ is a well defined continuous function.
\end{proof}

\begin{remark}\label{rm:DIN-original-coordinates}
Although Proposition~\ref{prop:DIN} is stated in the coordinates $(x,z)$, the homeomorphism $H$ in \eqref{eq:DIN-H} induces a metric in the original coordinates $(x,v)$ by pushing points forward to the $(x,z)$ space and measuring distances there: if $d$ denotes the metric of Proposition~\ref{prop:DIN}, set $d_H(u,w)=d(H(u),H(w))$. Since $H$ conjugates the two flows, DIN in the coordinates $(x,v)$ is IS with respect to $d_H$. Indeed, letting $\varphi_{xz}$ denote the flow of \eqref{eq:dynamic-inertial-system} and $\varphi_{xv}$ the flow of the first order system form of \eqref{eq:din-second-order}:
\begin{equation}
\begin{aligned}
    & d_H(\varphi_{xv}(u,t),\varphi_{xv}(w,t))=d(H(\varphi_{xv}(u,t)),H(\varphi_{xv}(w,t))) \\
    & = d(\varphi_{xz}(H(u),t),\varphi_{xz}(H(w),t)) \le \sigma(d(H(u),H(w))) \\
    & = \sigma(d_H(u,w)),
\end{aligned}
\end{equation}
and its endpoint map $H^{-1}\circ P\circ H$ is well defined and continuous. As $H$ is a homeomorphism of $\mathbb{R}^{2n}$, $d_H$ is again proper and compatible. In the convex case this gives IS with respect to $\|H(u)-H(w)\|$, not with respect to the Euclidean norm in $(x,v)$.
\end{remark}

Interestingly since the HB method is not directly the gradient descent of some energy function, proving that the system is incrementally stable for convex functions via a Euclidean or Riemannian metric appears to be harder. We therefore leverage the results obtained in \cite{attouch2000heavy, alvarez2000minimizing} to conclude incremental stability via Theorem \ref{thm:equivalence-limit-map-I.S}.

\begin{proposition}
    For any $\gamma > 0$ and any $C^2$ convex objective with a non-empty set of minimizers, HB defines an IS system (with respect to the metric constructed in the proof of Theorem \ref{thm:equivalence-limit-map-I.S}); hence, its endpoint map $P$ is well defined and continuous on $\mathbb{R}^{2n}$.
\end{proposition}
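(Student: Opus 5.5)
The plan is to verify condition 2 of Theorem~\ref{thm:equivalence-limit-map-I.S} for the first-order lift \eqref{eq:heavy-ball} on $G=\mathbb{R}^{2n}$, which is closed and path-connected. That theorem then supplies a proper compatible metric under which HB is IS, together with continuity of $P$. Two things must be checked: every trajectory has a limit in the equilibrium set, and every equilibrium is Lyapunov stable.

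\emph{Equilibria and convergence.} The equilibrium set of \eqref{eq:heavy-ball} is
\[
S=\{(x,0):\nabla f(x)=0\}=\arg\min f\times\{0\}.
\]
This holds because $v=0$ is forced by $\dot x=0$, then $\nabla f(x)=0$, and for a convex $f$ the stationary points are exactly the minimizers. It is nonempty by hypothesis. For convex $C^2$ objectives with a nonempty set of minimizers, \cite{alvarez2000minimizing,attouch2000heavy} show that every trajectory $x(t)$ converges to some minimizer $\bar x$, and that $\dot x(t)\to 0$. Hence $(x(t),v(t))\to(\bar x,0)\in S$, so $P$ is well defined on all of $\mathbb{R}^{2n}$. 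If the cited result only gives $\dot x\in L^2$, I will upgrade it to $v\to 0$ through the energy
\[
E=f(x)-f^\ast+\tfrac12\|v\|^2,
\]
which satisfies $\dot E=-\gamma\|v\|^2$. Since $E$ is nonincreasing and bounded below, it converges. Together with $x(t)\to\bar x$ (so $f(x(t))\to f^\ast$), this gives convergence of $\|v\|$, and $v\in L^2$ then forces the limit to be zero.

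\emph{Lyapunov stability of each equilibrium.} This is the main step, and I would use Alvarez's anchored Lyapunov function. Fix $(x^\ast,0)\in S$ and set
\[
W(x,v)=\tfrac12\|\gamma(x-x^\ast)+v\|^2+\gamma\big(f(x)-f^\ast\big)+\tfrac{\lambda}{2}\|v\|^2 .
\]
I would pick $\lambda\ge 0$ suitably; the standard choice is $h(t)=\tfrac12\|x-x^\ast\|^2$, combined through the identity $\ddot h+\gamma\dot h=\|\dot x\|^2-\langle\nabla f(x),x-x^\ast\rangle$. A direct computation using convexity, $\langle\nabla f(x),x-x^\ast\rangle\ge f(x)-f^\ast\ge0$, gives $\dot W\le 0$. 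For example, with $\lambda=1$ one obtains
\[
\dot W=-\gamma\langle\nabla f,x-x^\ast\rangle-\gamma\|v\|^2+\gamma\|v\|^2-\dots,
\]
and I would tune the coefficients to close this. Then $W$ is a quadratic upper and lower barrier near $(x^\ast,0)$, apart from the term $f-f^\ast$, which is $\le C\|x-x^\ast\|^2$ locally by the $C^2$ assumption and is nonnegative. So $W$ bounds $\|(x-x^\ast,v)\|^2$ from below and is $O(\|(x-x^\ast,v)\|^2)$ from above near the equilibrium. This yields Lyapunov stability with the usual $\delta$--$\eta$ argument: small initial $W$ keeps $\|\gamma(x-x^\ast)+v\|$ and $\|v\|$ small, hence keeps $\|x-x^\ast\|$ small. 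If the tuning of $W$ fails, the fallback is the non-expansiveness of $t\mapsto\|x(t)-x^\ast\|$-type estimates from \cite{attouch2000heavy}. These already show that $\|x(t)-x^\ast\|$ and $\|v(t)\|$ are controlled by $E(0)$ and $\|x(0)-x^\ast\|$, which is exactly a Lyapunov stability bound.

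\emph{Conclusion.} With both parts established, condition 2 of Theorem~\ref{thm:equivalence-limit-map-I.S} holds on $G=\mathbb{R}^{2n}$. The theorem then yields a proper compatible metric for which HB is IS, continuity of $P$, and path-connectedness of $S$. I expect the main obstacle to be the Lyapunov stability step, which means producing an explicit nonincreasing $W$ with the required two-sided local bounds. The convergence part is imported directly from the literature.
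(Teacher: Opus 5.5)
Your architecture matches the paper's. You import convergence of every trajectory to a point of $\arg\min f\times\{0\}$ from \cite{alvarez2000minimizing,attouch2000heavy}, prove Lyapunov stability of each equilibrium with a function anchored at $(x^\ast,0)$, and invoke condition 2 of Theorem~\ref{thm:equivalence-limit-map-I.S}. Your identification of $S$ and the upgrade $v\to0$ via $E$ are fine. The gap is in the step you yourself flag as the main one: the ansatz $W$ does not close for every $\gamma>0$, and your displayed $\lambda=1$ computation is not what one gets. Since $\frac{d}{dt}\big(\gamma(x-x^\ast)+v\big)=-\nabla f(x)$ along HB, one gets
\[
\dot W=-\gamma\langle\nabla f(x),x-x^\ast\rangle+(\gamma-1-\lambda)\langle v,\nabla f(x)\rangle-\lambda\gamma\|v\|^2 .
\]
The cross term cancels only for $\lambda=\gamma-1$, which is admissible only when $\gamma>1$. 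In that case $W$ is exactly $\gamma$ times the paper's function below with $\epsilon=1$. For $\gamma<1$ the quadratic part of $W$ has determinant $\gamma^2(\gamma-1)/4<0$, and $\dot W$ picks up $+\gamma(1-\gamma)\|v\|^2$. Without cancellation the family genuinely fails. For $f=\tfrac{L}{2}\|x\|^2$ with $x^\ast=0$, $\dot W\le 0$ holds everywhere iff $4\lambda\gamma^2\ge L(1-\gamma+\lambda)^2$. Since $(1-\gamma+\lambda)^2\ge 4(1-\gamma)\lambda$, this has no solution $\lambda\ge 0$ once $L(1-\gamma)>\gamma^2$. So as written you obtain Lyapunov stability only for $\gamma>1$, not for all $\gamma>0$.

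The missing idea is to decouple the anchoring weight from $\gamma$. The paper uses
\[
V(x,v)=\tfrac12\|v\|^2+\tfrac{\gamma\epsilon}{2}\|x-x^\ast\|^2+\epsilon\langle x-x^\ast,v\rangle+f(x)-f^\ast,\qquad \epsilon\in(0,\gamma),
\]
which equals $\tfrac12\|\epsilon(x-x^\ast)+v\|^2+\tfrac{\epsilon(\gamma-\epsilon)}{2}\|x-x^\ast\|^2+f(x)-f^\ast$. For this function $\dot V=-(\gamma-\epsilon)\|v\|^2-\epsilon\langle x-x^\ast,\nabla f(x)\rangle\le 0$ exactly, and the quadratic part has determinant $\epsilon(\gamma-\epsilon)/4>0$. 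Hence small sublevel sets of $V$ are forward-invariant neighborhoods of $(x^\ast,0)$ inside any prescribed ball. Your fallback can also be made to work for every $\gamma>0$, but it must be carried out rather than attributed to the literature; note that $\|x(t)-x^\ast\|$ itself is not nonincreasing along HB. Set $h=\tfrac12\|x-x^\ast\|^2$. Convexity gives $\ddot h+\gamma\dot h\le\|v\|^2$, and $\gamma\int_0^\infty\|v\|^2\,dt\le E(0)$. Integrating, $\dot h+\gamma h\le C_0:=\langle x_0-x^\ast,v_0\rangle+\gamma h(0)+E(0)/\gamma$, hence $h(t)\le h(0)+|C_0|/\gamma$. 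Combine this with $\|v(t)\|^2\le 2E(0)$ and with $f(x_0)-f^\ast=O(\|x_0-x^\ast\|^2)$ near $x^\ast$. This bounds $\|x(t)-x^\ast\|^2+\|v(t)\|^2$ by a constant times $\|x_0-x^\ast\|^2+\|v_0\|^2$, which is the required stability estimate.
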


\begin{proof}
    It was proven in \cite{attouch2000heavy, alvarez2000minimizing} that for a convex objective with a minimizer, the endpoint map $P$ of \eqref{eq:heavy-ball} is well defined. Therefore, it remains to prove that every equilibrium is Lyapunov stable. Consider the Lyapunov function defined with respect to some $x^{\ast} \in \arg\min f$:
    \begin{equation}
        V(x,v) = \frac{1}{2}\|v\|^2 + \frac{\gamma\epsilon}{2}\|x - x^{\ast}\|^2  + \epsilon\langle x-x^{\ast},v \rangle + f(x) - f^{\ast},
    \end{equation}
    where $\epsilon \in (0,\gamma)$. Its derivative with respect to the HB flow satisfies:
    \begin{equation}
    \begin{aligned}
        \dot{V}(x,v) \le -(\gamma-\epsilon)\|v\|^2 - \epsilon \langle x - x^{\ast}, \nabla f(x) \rangle \le 0,
    \end{aligned}
    \end{equation}
    where the last inequality is obtained via the convexity of $f$ and given that $\nabla f(x^{\ast}) = 0$.  $V(x,v)$ has a unique global minimizer at $(x^{\ast},0)$ and is coercive. Therefore, given any ball $B_{\delta}((x^\ast, 0))$, there exists $\delta' > 0$ such that the sublevel set $\{V \le \delta'\}$ is contained in $B_{\delta}((x^{\ast},0))$; by the inequality above, this sublevel set is forward invariant, and it contains a neighborhood of $(x^{\ast},0)$. Hence the point $(x^{\ast},0)$ is Lyapunov stable and since $x^{\ast}$ was arbitrary every equilibrium of \eqref{eq:heavy-ball} is Lyapunov stable.  Applying Theorem \ref{thm:equivalence-limit-map-I.S} one concludes that \eqref{eq:heavy-ball} is an incrementally stable system and its endpoint map $P$ is continuous.
    
\end{proof}

\paragraph*{A note on sum-of-squares (SOS) programs proving incremental stability}
Building on the convex framework developed in \cite{Aylward_2008}, one could search for contraction metrics $M(\xi)$, where $\xi$ denotes the state of \eqref{eq:heavy-ball} or \eqref{eq:dynamic-inertial-system}, represented as SOS matrix polynomials satisfying $M(\xi) \succeq \underline{\lambda} I$ for some $\underline{\lambda}>0$, and certifying weak contractivity of either \eqref{eq:heavy-ball} or \eqref{eq:dynamic-inertial-system}. Such an approach would require the objective function to be polynomial and, consequently, analytic. To establish the relevance of this convex search procedure, it is natural to focus on non-coercive objectives. Indeed, the results in \cite{haraux,alvarez2002second} establish pointwise convergence, from every initial condition, to a single point in the equilibrium set of each system, when the objective is coercive and analytic. In the non-coercive case, by contrast, pointwise convergence is established only under the additional assumption that the trajectory is bounded. Therefore, if one can construct a metric certifying weak contractivity of either second-order method, then Proposition \ref{prop:equivalence-of-topologies} implies that the distance induced by the metric is proper and generates the same topology as the Euclidean metric on $\mathbb{R}^{2n}$. Consequently, provided $f$ has a critical point (so that a bounded trajectory exists), Lemma \ref{lemma:la-Salle-I.S} can be invoked to establish continuity of the endpoint map.

\section{Conclusion}

We studied incremental stability of gradient flows satisfying generalized Polyak--\L{}ojasiewicz inequalities. For coercive objectives with a unique, locally strongly convex minimizer and a \kcPLI, we constructed an explicit Riemannian contraction metric whose distance is proper and compatible. Furthermore, under the curvature assumptions of Theorems~\ref{thm:concavity-on-annulus} and~\ref{prop:above-PL-regime} the Riemannian contraction metric is uniformly equivalent to the Euclidean metric. For multiple minimizers, we showed that IS with respect to a proper compatible metric, together with local asymptotic stability of the equilibrium set, is equivalent to global semistability, which implies continuity of the endpoint map and (path-)connectedness of the equilibrium set. We applied these results to classical \PLI\ objectives, to a new class of locally convex objectives, and to the Heavy-Ball and Dynamical Inertial Newton flows. Extensions to Hilbert spaces and sum-of-squares certificates are left for future work.

\bibliographystyle{IEEEtran}
\bibliography{ref.bib}

\section{Appendix}

\subsection{Proof of Theorem \ref{prop:global}}

To prove Theorem \ref{prop:global}, we first state and prove a Lemma used to interpolate between a constant and a state-dependent metric.

\begin{lemma}\label{lemma:partition-of-metrics}
Suppose $\loss$ satisfies Assumption \ref{assump:1}. Define a continuous scalar function $v : \mathbb{R} \to \mathbb{R}$ which is non-decreasing and continuously differentiable on the set $v^{-1}(a-\Delta, b + \Delta)$ for some $a < b\le+\infty$ and $\Delta > 0$ (with $b+\Delta:=+\infty$ when $b=+\infty$). Define $A = (v \circ f)^{-1}(a,b)$ and assume that for $M(x) = e^{2v(f(x))}I$ (which is in $C^1((v \circ f)^{-1}(a-\Delta, b + \Delta), \mathbb{S}^n))$ we have, for the gradient flow \eqref{eq:gradient-flow}, that it satisfies
\begin{equation}
    \mathcal{L}_{M}(x) \;\le\; -\nu_1 \le 0, \; \forall x \in A,
\end{equation}
Let $A^{c}$ denote the complement of $A$. Assume further that, for all $x \in A^{c}$, one has that $\lambda_{\min}(\nabla^2 f(x)) \ge \nu \ge 0$. Then there exists a metric $\hat{M}(x) \in C^1(\mathbb{R}^n,\mathbb{S}^n)$ which satisfies:
\begin{equation}
    \mathcal{L}_{\hat{M}}(x) \;\le\; -\min(\nu_1,\nu) \le 0, \; \forall x \in \mathbb{R}^n
\end{equation}
\end{lemma}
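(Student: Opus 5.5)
The plan is to build $\hat M$ as a conformal rescaling of the identity, $\hat M(x)=e^{2W(x)}I$ with $W:=h\circ v\circ f$. Here $h:\mathbb{R}\to\mathbb{R}$ is a $C^1$ saturation function that agrees with the identity on $[a,b]$ and is constant far outside this interval. Lemma~\ref{lemma:riemmanina-metric-with-lyapunov} with $N=I$ then gives
\[
\mathcal{L}_{\hat M}(x)=\mathcal{L}_I(x)+\dot W(x).
\]
For the gradient flow $\nabla g=-\nabla^2 f$ is symmetric, so by Lemma~\ref{lemma:tightness-sysmmetric} and Remark~\ref{rm:lyapunov-equation} we have $\mathcal{L}_I(x)=-\lambda_{\min}(\nabla^2 f(x))$. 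Moreover, wherever $v$ is differentiable,
\[
\dot W(x)=-h'(v(f(x)))\,v'(f(x))\,\|\nabla f(x)\|^2\le 0,
\]
provided $h'\ge 0$.

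\emph{Construction of $h$.} First pick $\delta\in(0,\Delta)$. Choose a continuous $\rho:\mathbb{R}\to[0,1]$ with $\rho\equiv 1$ on $[a,b]$, $\rho\equiv 0$ outside $[a-\delta,b+\delta]$ (only the left cutoff if $b=+\infty$), and $\rho$ monotone on the two transition intervals. Set $h(r):=a+\int_a^r\rho$. Then $h\in C^1$, $h(r)=r$ on $[a,b]$, $0\le h'\le 1$, and $h$ is constant on $(-\infty,a-\delta]$ and on $[b+\delta,\infty)$.

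\emph{Regularity of $\hat M$.} On the open set $U:=(v\circ f)^{-1}(a-\Delta,b+\Delta)$, $v$ is $C^1$, so $W$ is $C^1$ there. On the set $\{v\circ f<a-\delta\}\cup\{v\circ f>b+\delta\}$, which is open by continuity of $v\circ f$, $W$ is locally constant. These two open sets cover $\mathbb{R}^n$, hence $W\in C^1(\mathbb{R}^n)$ and $\hat M\in C^1(\mathbb{R}^n,\mathbb{S}^n)$ is positive definite.

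\emph{Verification of the bound, by cases.}
\begin{itemize}
\item For $x\in A$ we have $v(f(x))\in(a,b)$, so $h'=1$ and $W=v\circ f$ near $x$. Hence $\hat M=M$ locally and $\mathcal{L}_{\hat M}(x)=\mathcal{L}_M(x)\le-\nu_1$.
\item For $x\in A^c$ we have $\lambda_{\min}(\nabla^2 f(x))\ge\nu$ and, in either regime above, $\dot W(x)\le 0$. Hence $\mathcal{L}_{\hat M}(x)\le-\lambda_{\min}(\nabla^2 f(x))\le-\nu$.
\end{itemize}
Combining both cases gives $\mathcal{L}_{\hat M}\le-\min(\nu_1,\nu)$ on $\mathbb{R}^n$.

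The main subtlety is that $h'$ must equal $1$ throughout $[a,b]$, and not merely lie in $[0,1]$. Inside $A$ we only know $\mathcal{L}_I\le-\nu_1-\dot v$ with $\dot v\le0$, so a factor $h'<1$ there would leave a positive remainder $-(1-h')\dot v$. The construction therefore confines all of the transition of $h$ to $A^c$. This is exactly where the convexity hypothesis on $A^c$, together with $\dot W\le 0$, absorbs the interpolation; the $\Delta$-margin on which $v$ is $C^1$ is what lets this transition happen smoothly.
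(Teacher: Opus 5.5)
Your proposal is correct and follows essentially the same route as the paper: the paper also sets $\hat M(x)=e^{2h(v(f(x)))}I$ with $h(s)=a+\int_a^s h'$, where $h'$ is a cutoff equal to $1$ on $[a,b]$ and vanishing outside a neighborhood of that interval. It then gets $\mathcal{L}_{\hat M}\le-\nu$ on $A^c$ from $h'v'\|\nabla f\|^2\ge 0$, and $\hat M=M$ on $A$.

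The one difference is your transition width $\delta<\Delta$ together with the open-cover argument for $C^1$ regularity. This is slightly cleaner than the paper's width exactly $\Delta$, because the lemma's hypotheses do not guarantee that $v$ is differentiable on the level sets $v\circ f=a-\Delta$ and $v\circ f=b+\Delta$.
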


\begin{proof}
    Assume first that $b<\infty$.
    Consider a smooth non-negative function $\chi : \mathbb{R} \rightarrow [0,1]$ satisfying,
    \begin{equation}
    \begin{aligned}
        & \chi(t) = 0 \quad \text{for } t \le 0 \\
        & \chi(t) = 1  \quad \text{for } t \ge 1,
    \end{aligned}
    \end{equation}
    and define,
    \begin{equation}
        h'(s) = \begin{cases}
            0 & s \le  a- \Delta\\
            \chi\left(\frac{s - (a - \Delta)}{\Delta}\right) & a- \Delta < s < a \\
            1 & a \le s \le b \\
            1 - \chi\left(\frac{s - b}{\Delta} \right) & s > b\\
        \end{cases} \\
    \end{equation}
    and set $h(s):=a+\int_a^s h'(u)\,du$, so that $h(s)$ is constant for $s \le a-\Delta$ and $s \ge b + \Delta$ and satisfies $h(s) = s$ for $s \in [a,b]$. Let $\hat{M}(x) = e^{2 h(v(f(x)))}I$. We need to show that $\mathcal{L}_{\hat{M}}(x) \le -\min(\nu_1,\nu)$ for all $x \in \mathbb{R}^n$.  First notice that  $f$ is $\nu-$strongly convex on $(v \circ f)^{-1}[v(\mloss),a-\Delta] \subset A^c$ and $h(v(f(x)))$ is constant in that set so $\hat{M} = e^{2c}I$ for some constant $c \in \mathbb{R}$ and $\mathcal{L}_{\hat{M}}(x) = \mathcal{L}_{I}(x) \le -\nu$. For $x \in (v \circ f)^{-1}(a-\Delta, a]$ we apply Lemma \ref{lemma:tightness-sysmmetric} and \ref{lemma:riemmanina-metric-with-lyapunov} to get:
    \begin{equation}\label{eq:contraction-with-classK-weighting}
    \begin{aligned}    
        & \mathcal{L}_{\hat{M}}(x) = -\lambda_{\min}(\nabla^2f(x)) - \nabla h(v(f(x)))^{\top} \nabla f(x) \\
        & = -\lambda_{\min}(\nabla^2f(x)) - h'(v(f(x)))v'(f(x)) \|\nabla f(x)\|^2 \\
        & \leq -\lambda_{\min}(\nabla^2f(x)) - h'(v(f(x)))v'(f(x))\alpha(f(x) - \mloss)^2.
    \end{aligned}
    \end{equation}
    For all $x$, $v$ is non-decreasing so $v'(f(x)) \ge 0$ and also by definition $h'(v(f(x))) \ge 0 $. Therefore since $x \in (v \circ f)^{-1}(a-\Delta, a] \subset A^c$, it still holds that $\mathcal{L}_{\hat{M}}(x) \le -\nu$. For $x \in A$ we have that $h(v(f(x))) = v(f(x))$ hence $\hat{M} = M$ and $\mathcal{L}_{\hat{M}}(x) \le -\nu_1$. Therefore $\mathcal{L}_{\hat{M}}(x) \le -\min(\nu_1,\nu)$ for all $x \in (v \circ f)^{-1}[v(\mloss),b)$.
    
    \noindent For $x \in (v \circ f)^{-1}[b, \infty) \subset A^c$ we know that $-\lambda_{\min}(\nabla^2f(x)) \leq -\nu$. Considering first $x \in (v \circ f)^{-1}[b,b+\Delta]$, we get again by using that $h'(v(f(x))) \ge 0$, $v'(f(x)) \ge 0$ and \eqref{eq:contraction-with-classK-weighting} that $\mathcal{L}_{\hat{M}}(x) \le -\nu$. For $v(f(x)) \ge b + \Delta$ we have that $h'(v(f(x))) = 0$ so for some $c_2 \in \mathbb{R}$,  $h(v(f(x))) = c_2$. Therefore we have that $\hat{M}(x) = e^{2c_2}I$ and $\mathcal{L}_{\hat{M}} \le -\nu$ for $x \in (v \circ f)^{-1}[b+\Delta,\infty)$. This finalizes the proof that $\mathcal{L}_{\hat{M}}(x) \le -\min(\nu_1,\nu)$ for all $x \in \mathbb{R}^n$. Since $v\circ f$ is $C^1$ on $(v\circ f)^{-1}((a-\Delta,b+\Delta))$, and $h$ is constant outside $(a-\Delta,b+\Delta)$ with $h'=0$ at the endpoints, it follows that $\hat{M}\in C^1(\mathbb{R}^n,\mathbb{S}^n)$.
    If $b=+\infty$, the proof follows similarly, without an upper transition, but we instead define
    \begin{equation}
        h'(s) = \begin{cases}
            0 &  s \le a -\Delta, \\
            \chi \left(\frac{s - (a-\Delta)}{\Delta} \right) & s \ge a -\Delta, \\
        \end{cases} 
    \end{equation}
    again with $h(s):=a+\int_a^s h'(u)\,du$, so that $h(s)=s$ for all $s\ge a$ and $h$ is constant for $s\le a-\Delta$.
\end{proof}

We are now ready to state the proof of Theorem \ref{prop:global}.

\begin{proof}
   First we prove that there exists $M(x)$ such that $\mathcal{L}_{M}(x) \le -\nu$ on $\mathbb{R}^n \setminus W$ where $W$ is a small open connected set around $x^*$. Let $v$ be a real-valued function and define:
   \begin{equation}
       M(x) = Ie^{2v(f(x))}.
   \end{equation}
   We have by Lemma \ref{lemma:tightness-sysmmetric} and Lemma \ref{lemma:riemmanina-metric-with-lyapunov}:
   \begin{equation}
     \mathcal{L}_{M}(x) = -\lambda_{\min}(\nabla^2f(x)) - \frac{\partial v}{\partial f(x)} \|\nabla f(x)\|^2
   \end{equation}
   If $\frac{\partial v}{\partial f} \ge 0$ then by the \kcPLI\ bound on $\loss$:
   \begin{equation}
       \mathcal{L}_{M}(x)  \le - \lambda_{\min}(\nabla^2f(x)) - \frac{\partial v}{\partial f(x)} \alpha(f(x)-f^*)^2.
   \end{equation}
    Therefore some algebraic manipulation shows that for $\mathcal{L}_{M}(x) \leq -\nu$, it is sufficient to construct a continuously differentiable $v$ such that:
    \begin{equation}\label{eq:bound-partial-g}
        \frac{\partial v}{\partial f(x)} \geq \max \{\max_{{\{z \in f^{-1}(f(x))\}}} \frac{\nu - \lambda_{\min}(\nabla^2 f(z))}{\alpha(f(x)-f^*)^2},0\}.
    \end{equation}
   Define:
    \begin{equation}
        m(x) = \nu - \lambda_{\min}(\nabla^2 f(x)),
    \end{equation}
    \begin{equation}\label{eq:m*}
        m^*(y) = \max\{m(x): f(x) = y\},
    \end{equation}
    Since $f\in C^2$, $\lambda_{\min}(\nabla^2 f(x))$ is continuous and so is $m(x)$. Because $f$ is coercive, $f^{-1}(\cdot)$ is a compact valued map and $m^*(y) = m(x_y)$ for some $x_y \in f^{-1}(y)$. Consider any sequence $y_n \to y$, we can assume without loss of generality that the sequence belongs to some compact set $[y_l, y_u]$, hence there exists a subsequence $x_n$ such that $x_n \to x$ and $m(x_n) = m^*(y_n)$. By continuity of $m(\cdot)$:
    \begin{equation}
    \begin{aligned}
        & \limsup_{y_k \to y} m^*(y_k)  = \lim_{n \to \infty} m^*(y_n) = \\ 
        & \lim_{n \to \infty} m(x_n) = m(x) \le m^*(y)
    \end{aligned}
    \end{equation}
    Therefore $m^*$ is upper semicontinuous and locally bounded above. We can then construct a piecewise linear function $m^u$ which satisfies $m^u(f(x)) > \max\{m^*(f(x)),0\}$. 
    \noindent Now let $W = f^{-1}[\mloss,\mloss+\delta)$ where $\delta$ is sufficiently small so that $\lambda_{\min}(\nabla^2 f(x)) > \nu - \epsilon$ for all $x \in f^{-1}([\mloss,\mloss+\delta])$ and define:
    \begin{equation}
        v(s) = \begin{cases}
            \int_{f^{\ast} + \frac{\delta}{2}}^{s}  \frac{m^u(\tau)}{\alpha(\tau - \mloss)^2} d\tau, & s \ge f^{\ast} + \frac{\delta}{2} \\
            0 & s \le f^{\ast} + \frac{\delta}{2}.
        \end{cases}
    \end{equation}
    Since $\alpha$ is continuous and positive on $(0,\infty)$, the denominator inside the integral expression above is continuous and strictly greater than zero for $\tau \ge f^{\ast} + \frac{\delta}{2}$, and we have that $v \in C{^1}((f^{\ast} + \frac{\delta}{2},\infty),\mathbb{R})$ is a non-decreasing function which satisfies \eqref{eq:bound-partial-g} in $\mathbb{R}^n \setminus W$. Therefore $M(x) \in C^{1}(\mathbb{R}^n \setminus W, \mathbb{S}^n)$ and $  \mathcal{L}_{M}(x) \;\le\; -\nu, \; \forall x \in \mathbb{R}^n \setminus W.$ This allows us to apply Lemma \ref{lemma:partition-of-metrics}, with $A = (v \circ f)^{-1}(v(f^* + \delta), \infty)$ (i.e., $a=v(f^*+\delta)$ and $b=+\infty$), any $\Delta \in (0, v(\mloss + \delta))$, and with $\nu - \epsilon$ in place of $\nu$, to construct $\hat{M}(x) \in C^{1}(\mathbb{R}^n,\mathbb{S}^n)$ such that:
    \begin{equation}
        \mathcal{L}_{\hat{M}}(x) \;\le\; -\nu +\epsilon, \; \forall x \in \mathbb{R}^n.
    \end{equation}    
    By construction, $M(x)$ is uniformly lower bounded since $m^u(\cdot) \ge 0$ and from the proof of Lemma \ref{lemma:partition-of-metrics}, so is $\hat{M}(x)$. So from the lower-bound estimate in the proof of Proposition \ref{prop:ies} we can also derive that there exists $c > 0$ such that:
    \begin{equation}
        \|\varphi(x_0,t) - x^{\ast}\| \le c d_{\hat{M}}(x_0,x^{\ast})e^{-(\nu - \epsilon) t}.
    \end{equation}
\end{proof}

\subsection{Proof of Theorem \ref{thm:concavity-on-annulus}}

\begin{remark}\label{rm:restricted-form-M}
    We won't prove it here in the interest of space, but for any $f$ satisfying Assumption \ref{assump:1}, $\epsilon \in (0,\nu)$ and compact set $K$ containing $x^{\ast}$, we can restrict the metric $\hat{M}$ to be of the form $e^{2cf(x)}I$ for some $c > 0$ to show that:
    \begin{equation}
        \mathcal{L}_{\hat{M}}(x) \;\le\; -\nu + \epsilon, \; \forall x \in K.
    \end{equation}
    The proof follows almost identically to Theorem 4.1 of \cite{GIESLConverse}. We will use this simpler $M$ to simplify the proof of Theorem \ref{thm:concavity-on-annulus}.
\end{remark}

\begin{proof}
    WLOG we can assume $K$ contains $x^{\ast}$. By Remark \ref{rm:restricted-form-M} there exists $c > 0$, such that for $M(x) = e^{2cf(x)}I$,  $\mathcal{L}_{M}(x) \le -l < 0$ (with $l \in (0,\nu)$, which can be made arbitrarily close to $\nu$ as $c$ increases per Theorem 4.1 of \cite{GIESLConverse}) for all $x \in K$. Since $K$ is bounded, $f$ is continuous and coercive there exists $b > \max_{x\in K}cf(x)$ such that the compact set $\mathcal{C} = (cf)^{-1}([c\mloss,b])$ satisfies $\mathcal{C} \supset K$. Strong convexity in $K^c$ shows via an application of Lemma \ref{lemma:tightness-sysmmetric} and Lemma \ref{lemma:riemmanina-metric-with-lyapunov} that for the same metric $M(x) = e^{2cf(x)}I$ we have $\mathcal{L}_{M}(x) \leq -\min\{m,l\} < 0$ for all $x \in \mathcal{C}$. Lastly $f$ is $m-$strongly convex in $(cf)^{-1}([b,\infty)) \subset K^c$ and therefore by Lemma \ref{lemma:partition-of-metrics} (with $v(s) = cs$, $a=c\mloss$, $b$ as above, any $\Delta>0$, and $A = (cf)^{-1}(cf^{\ast},b)$; since $v\circ f\ge a$, only the upper cutoff of that proof is active) there exists a metric $\hat{M}(x) \in C^{1}(\mathbb{R}^n,\mathbb{S}^n)$, which is uniformly bounded below and above, satisfying:
    \begin{equation}
        \mathcal{L}_{\hat{M}}(x) \le -\min\{m,l\} < 0, \; \forall x \in \mathbb{R}^n.
    \end{equation} 
    Hence, by Proposition \ref{prop:ies}, the gradient flow system is globally IES and $x^{\ast}$ is globally exponentially stable.
\end{proof}

\subsection{Proof of Theorem \ref{prop:above-PL-regime}}
\begin{proof}
As in Theorem \ref{prop:global}, we prove that there exists $M(x)$ such that $\mathcal{L}_{M}(x) \le -\nu$ on $\mathbb{R}^n \setminus W$ where $W$ is a small open connected set around $x^*$, chosen as in Theorem~\ref{prop:global}: $W=f^{-1}[\mloss,\mloss+\delta)$, with $\delta>0$ such that $\lambda_{\min}(\nabla^2 f(x))>\nu-\epsilon$ on $f^{-1}([\mloss,\mloss+\delta])$ for a fixed $\epsilon\in(0,\nu)$. Let $v$ be a real-valued function and define $M(x) = Ie^{2v(f(x))}$. By Lemma \ref{lemma:tightness-sysmmetric} and Lemma \ref{lemma:riemmanina-metric-with-lyapunov}, we obtain that a sufficient condition for contractivity to hold on $\mathbb{R}^n \setminus W$, given a metric $M(x) = e^{2v(f(x))}I$ and $\lambda_{\min}(\nabla^2 f(x)) \ge -m$, is:
\begin{equation}\label{eq:equivalence-contraction}
\frac{\partial v}{\partial f(x)}\|\nabla f(x)\|^2 \ge \nu + m
\end{equation}

Define, as in Theorem \ref{prop:global}, $s_0 = \mloss+\frac{\delta}{2}$,
\begin{equation}
    \psi(s) = \int_{s_0}^{s} \frac{1}{\alpha(\tau - f^*)^2} d\tau,
\end{equation}

and for some $c > 0$, let $v(s) = c \; \psi(s)$ for $s\ge s_0$ and $v(s)=0$ for $s\le s_0$, so that $v$ is continuous and non-decreasing on $\mathbb{R}$ and $C^1$ on $(s_0,\infty)$. By the Fundamental Theorem of Calculus and $\|\nabla f(x)\| \geq \alpha(f(x) - f^*)$:

\begin{equation}
\frac{\partial v}{\partial f(x)} \|\nabla f(x)\|^2  = 
\frac{c\|\nabla f(x)\|^2}{\alpha(f(x)-f^*)^2} \geq c.
\end{equation}

Choosing $c \ge \nu+m$ ensures \eqref{eq:equivalence-contraction} holds, and by \eqref{eq:strong-PLI}, $v$ is uniformly bounded above on $\mathbb{R}$ (and $v \ge 0$ everywhere), which implies $M(x)$ is uniformly bounded above and below on $\mathbb{R}^n \setminus W$. As in the proof of Theorem \ref{prop:global}, Lemma \ref{lemma:partition-of-metrics}, with $a=v(\mloss+\delta)$, $b=+\infty$, any $\Delta\in(0,v(\mloss+\delta))$, and $\nu-\epsilon$ in place of $\nu$, extends $M$ into a smooth metric $\hat{M}$ that satisfies the contractivity condition, $\mathcal{L}_{\hat{M}}(x)\le-\nu+\epsilon$, in $\mathbb{R}^n$ while keeping the property that $\hat{M}$ is uniformly bounded above and below. By Proposition \ref{prop:ies} the gradient flow system is globally IES and also $x^{\ast}$ is globally exponentially stable.
\end{proof}

\subsection{Generalization of Theorem \ref{thm:concavity-on-annulus} to Theorem \ref{thm:ies-inf-contracting-finite-time}}

The proof follows the approach developed in \cite{russo2010entrainment,contractive-with-inputs-sontag}. It could also be adapted to the setting of time-varying vector fields and $K$-reachable sets $G$ as in the above references, however for exposition purposes we take the simpler setting where the forward-invariant set is $\mathbb{R}^n$ and the vector field is autonomous.

\begin{proof}
Take any two initial conditions $x_0,z_0$ and the straight segment connecting the two denoted by $\gamma(r)=r x_0 +(1-r)z_0$ for $r \in [0,1]$. Define $\psi(t,r)=\varphi(\gamma(r),t)$ and $w(t,r)=\frac{\partial}{\partial r}\psi(t,r)$. Differentiating and using the chain rule, $\frac{\partial w}{\partial t}(t,r) = \frac{\partial}{\partial r}g(\psi(t,r)) = \nabla g(\psi(t,r)) w(t,r)$. Hence by Coppel's inequality:
\begin{equation}
    \|w(t,r)\| \le \|w(0,r)\| e^{\int_{0}^{t} \mu(\nabla g(\psi(\tau,r)))\,d\tau}.
\end{equation}
Since each trajectory \(\psi(t,r)\) spends at most time $T$ in $K^c$, and $\underline{\nu} + \overline{\nu} \ge 0$:
\begin{equation}
\int_0^t \mu\!\left(\nabla g(\psi(\tau,r))\right)\,d\tau
\le \overline{\nu}T-\underline{\nu}(t-T)
=
(\overline{\nu} + \underline{\nu})T-\underline{\nu}t.
\end{equation}
Therefore, $\|w(t,r)\|\le e^{(\overline{\nu} + \underline{\nu})T}e^{-\underline{\nu} t}\|w(0,r)\|.$ $w(0,r)=\gamma'(r)=x_0-z_0,$ and the Fundamental Theorem of Calculus gives $\varphi(x_0,t)- \varphi(z_0,t) = \int_0^1 w(t,r)\,dr,$ thus
\begin{equation}
\begin{aligned}
& \|\varphi(x_0,t)-\varphi(z_0,t)\|
\le
\int_0^1 \|w(t,r)\|\,dr \\
& \le
e^{(\overline{\nu} + \underline{\nu})T}e^{-\underline{\nu}t}\|x_0-z_0\|.
\end{aligned}
\end{equation}
\end{proof}

The proof of Corollary \ref{cor:ies-inf-contracting-compact} is now straightforward:

\begin{proof}
Let $K = \mathbb{R}^n \setminus C$, $\underline{\nu} = \nu$ and $\overline{\nu} = \max\{-\nu, \max_{x\in C} \mu(\nabla g(x))\}$, which is finite since $C$ is compact and $x \to \mu(\nabla g(x))$ is continuous. By Theorem \ref{thm:ies-inf-contracting-finite-time} it remains to be shown that for all $x \in \mathbb{R}^n$ the system spends at most finite time $T$ in $C$. Since $C$ is compact and $x^{\ast}$ is globally asymptotically stable, for any $\delta > 0$ there exists a finite time $T$ such that $\varphi(x,T) \in B_{\delta}(x^*)$ for any $x \in C$. Local stability of $x^{\ast}$ allows one to choose $\delta$ small enough such that $B_{\delta}(x^{\ast}) \cap C = \emptyset$. Lastly if for any $y \in \mathbb{R}^n$ the time $t$ map $\varphi(y,t) = x$ where $x \in C$, then we have that $\varphi(\varphi(y,t),T) \in B_{\delta}(x^{\ast})$. This concludes that trajectories starting in $C$ spend at most time $T$ in $C$ and so do trajectories entering $C$ through the flow. Theorem \ref{thm:ies-inf-contracting-finite-time} then proves the resulting system is IES.
\end{proof}

\subsection{Proofs of Incrementally Stable System}

\textit{Lemma \ref{lemma:local-to-global}}

\begin{proof}
    Assume by contradiction that $B_S \ne G$. Since $S$ is locally asymptotically stable $B_S$ is open relative to $G$, and since $G$ is connected and $B_S \ne G$, $\partial B_S \neq \emptyset$ (the relative boundary of $B_S$ with respect to $G$). Pick any $y \in \partial B_S$. Since $y \notin B_S$ (as $B_S$ is open), there exists $r > 0$ such that $\dist(\varphi(y,t), S) \ge r$ for all $t \ge 0$. Indeed one can take $r = \rho$, the radius in the definition of local asymptotic stability, since $\dist(\varphi(y,t),S) < \rho$ for some $t$ implies $\dist(\varphi(y,t),S) \to 0$ and hence $y \in B_S$. Now choose $\epsilon' \le \frac{r}{2}$ and, since $y \in \partial B_S$ and $B_S$ is open, pick $x \in B_S$ with $\sigma(d(x,y)) \le \epsilon'$. By incremental stability:
    \begin{equation}\label{eq:non-exp-prop-1}
        d(\varphi(x,t),\varphi(y,t)) \le \sigma(d(x,y)) \le \epsilon' \le \frac{r}{2}, \quad \text{for all } t \ge 0.
    \end{equation}
    Since $x \in B_S$, we have $\dist(\varphi(x,t), S) \to 0$, so there exists $N > 0$ such that for all $t > N$:
    \begin{equation}
        \dist(\varphi(x,t), S) < \frac{r}{4}.
    \end{equation}
    By the triangle inequality, for any $s \in S$:
    \begin{equation}
        d(s,\varphi(y,t)) \le d(s,\varphi(x,t)) + d(\varphi(x,t),\varphi(y,t)).
    \end{equation}
    Taking the infimum over $s \in S$:
    \begin{equation}
        \dist(\varphi(y,t), S) \le \dist(\varphi(x,t), S) + d(\varphi(x,t),\varphi(y,t)).
    \end{equation}
    Rearranging, for $t > N$:
    \begin{equation}
    \begin{aligned}
        & d(\varphi(x,t),\varphi(y,t)) \ge \dist(\varphi(y,t), S) - \dist(\varphi(x,t), S) \ge \\
        & r - \frac{r}{4} = \frac{3r}{4}.
    \end{aligned}
    \end{equation}
    This contradicts \eqref{eq:non-exp-prop-1}, and therefore $B_S = G$.
\end{proof}

\textit{Lemma \ref{lem:omega-limit-singleton}}

\begin{proof}
We first show that the $\omega$-limit set of any trajectory is compact. Fix $y \in G$ and let $y_S \in S$. Since $y_S$ is an equilibrium, $\varphi(y_S, t) = y_S$ for all $t \ge 0$. By IS:
\begin{equation}
    d(\varphi(y,t),y_S) = d(\varphi(y,t),\varphi(y_S,t)) \le \sigma(d(y,y_S)).
\end{equation}
Therefore, $\varphi(y,t)$ lies in a compact ball of $d(\cdot,\cdot)$. Since the $\omega$-limit set $\omega(y)$ is closed and the trajectory lies in a compact set, $\omega(y)$ is compact and nonempty.

By assumption, there exists $y_i \in \omega(y) \cap S$, which is an equilibrium. By definition, there exists a sequence $t_i \to \infty$ such that $\|\varphi(y,t_i)-y_i\| \to 0$, which implies, since $d$ is compatible, that $d(\varphi(y,t_i),y_i) \to 0$. Given $\epsilon > 0$, choose $t_{i'}$ large enough such that
\begin{equation}
    \sigma(d(\varphi(y, t_{i'}),y_i)) \le \epsilon.
\end{equation}
Since $y_i \in S$ is an equilibrium, $\varphi(y_i, k) = y_i$ for all $k \ge 0$. By incremental stability, for any $k > 0$:
\begin{equation}\label{eq:bound-on-y_i-convergence}
\begin{aligned}
    & d(\varphi(y, t_{i'} + k),y_i) = d(\varphi(\varphi(y,t_{i'}), k),\varphi(y_i, k)) \\
    & \le \sigma(d(\varphi(y, t_{i'}),y_i)) \le \epsilon.
\end{aligned}
\end{equation}
Therefore $\varphi(y,t) \to_{t \to \infty} y_i$ and this shows that $\omega(y)$ is a single equilibrium point. Using the triangle inequality twice:
\begin{equation}
\begin{aligned}
    & |d(\varphi(y,t),\varphi(x,t)) - d(P(y),P(x))| \le \\
    & d(\varphi(y,t),P(y)) + d(\varphi(x,t),P(x)) \to_{t \to \infty} 0.
\end{aligned}
\end{equation}
Hence:
\begin{equation}
    d(P(y), P(x)) = \lim_{t \to \infty} d(\varphi(y,t), \varphi(x,t))  \le \sigma(d(y,x)).
\end{equation}
Because $\sigma$ is a class $\mathcal{K}$ function, this implies $P$ is uniformly continuous with respect to $d$.
\end{proof}

\textit{Theorem \ref{thm:equivalence-limit-map-I.S}}

\begin{proof}
 \paragraph{$(1) \Rightarrow (2)$}
 Since the system is IS and $S$ is locally asymptotically stable then by Lemma \ref{lemma:local-to-global}, $S$ is globally asymptotically stable. Let $s \in S$ the IS condition implies:
 \begin{equation}
     d(\varphi(x,t), s) \le \sigma(d(x,s))
 \end{equation}
 hence $s$ is also clearly Lyapunov stable. Lastly Lemma \ref{lem:omega-limit-singleton} proves the endpoint map $P$  exists.
 \paragraph{$(2) \Rightarrow (1)$}
 Consider:
\begin{equation}
    d(x,y) = \sup_{t \ge 0} \|\varphi(x,t) - \varphi(y,t)\|.
\end{equation}
Since $P(x) = \lim_{t\to\infty} \varphi(x,t)$ exists, $d(x,y) < \infty$. We now show that $d$ is a metric. $d(\cdot,\cdot)$ is defined via the maximum separation of the flow given initial conditions $x,y$, therefore it is clear that $d(x,y) = 0$ if and only if $x = y$. For $x \neq y$ we have $d(x,y) \ge \|x-y\| > 0$. Thirdly $d(\cdot,\cdot)$ satisfies the triangle inequality via:

\begin{equation}
\begin{aligned}
    & d(x,y) = \sup_{t \ge 0} \|\varphi(x,t) - \varphi(y,t)\| \\
    & \le \sup_{t \ge 0} \|\varphi(x,t) - \varphi(z,t)\| + \sup_{t \ge 0}\|\varphi(z,t) - \varphi(y,t)\| \\
    & = d(x,z) + d(z,y).
\end{aligned}
\end{equation}

Therefore $d(\cdot,\cdot)$ is a metric. Furthermore by:

\begin{equation}
\begin{aligned}
    & d(\varphi(x,s),\varphi(y,s)) = \sup_{t \ge 0} \|\varphi(x,s+t) - \varphi(y,s+t)\| \\
    & \le \sup_{t \ge 0} \|\varphi(x,t) - \varphi(y,t)\| = d(x,y)
\end{aligned}
\end{equation}

the system is incrementally stable under $d$. It remains to show that $d$ induces the Euclidean subspace topology on $G$ and $d$ is proper. Leveraging that $d(x,y) \ge \|x-y\|$, if $d(x,y) \to 0$ then $\|x-y\| \to 0$ as well. Now suppose $\|x-y\| \to 0$ we have to show that $d(x,y) \to 0$. Without loss of generality fix $x$ and $\epsilon > 0$, by Lyapunov stability of $P(x)$ choose $\rho$ sufficiently small such that:
\begin{equation}
    \|z - P(x)\| \le \rho \Rightarrow \|\varphi(z,t) - P(x)\| \le \frac{\epsilon}{2}.
\end{equation}

Let $T$ be such that $\|\varphi(x,T) - P(x)\| \le \frac{\rho}{2}$ and which by Lyapunov stability implies $\|\varphi(x,T+s) - P(x)\| \le \frac{\epsilon}{2}
$ for all $s \ge 0$. By uniform continuity of solutions over a finite interval $[0,T]$ we can choose $y$ sufficiently close to $x$ such that both:

\begin{equation}
    \sup_{t \in [0,T]} \|\varphi(x,t) - \varphi(y,t)\| < \frac{\epsilon}{2} , \quad  \|\varphi(x,T) - \varphi(y,T)\| < \frac{\rho}{2}
\end{equation}

are satisfied. The second inequality implies via the triangle inequality that:

\begin{equation}
\begin{aligned}
    & \|\varphi(y,T) - P(x)\| \le \|\varphi(y,T) - \varphi(x,T)\| + \|\varphi(x,T) - P(x)\| \\
    & \le \rho \Rightarrow \|\varphi(y,T + s) - P(x)\| \le \frac{\epsilon}{2} \text{ for all } s \ge 0.
\end{aligned}
\end{equation}

Lastly the above also implies via the triangle inequality that:

\begin{equation}
    \|\varphi(y,T + s) - \varphi(x,T + s)\| \le \epsilon.
\end{equation}

Therefore $\|x-y\| \to 0 \Rightarrow d(x,y) \to 0$. It remains to show that the metric is proper. From the definition of $d(\cdot,\cdot)$, it holds that:
\begin{equation}
    d(x,p) \ge \|x - p\|.
\end{equation}
Therefore:
\begin{equation}
\begin{aligned}
    & \{x \;\; | \;\;  x \in G \text{ and } d(x,p) \le R\} = \overline B^{d,G}_R(p)
    \subseteq \\
    & \overline B_{R}^G(p) = \{x \;\; | \;\; x \in G \text{ and } \|x - p\| \le R\}
\end{aligned}
\end{equation}
Since $d(\cdot,\cdot)$ induces the Euclidean subspace topology, the set $\bar{B}_{R}^{d,G}(p)$ is closed relative to $G$ and boundedness follows from the above relation. Since $G$ itself is closed this proves $\bar{B}_{R}^{d}(p)$ is closed and bounded in $\mathbb{R}^n$, which shows the metric is proper. The set $S$ is stable with respect to $d$ since IS provides in the metric defined by $d$ that:
\begin{equation}
    \dist_d(\varphi(x,t),S) \le \dist_d(x,S),
\end{equation}
and it is asymptotically stable in $d$ due to,
\begin{equation}
    \dist_d(\varphi(x,t), S) \le d(\varphi(x,t),P(x)) \to 0.
\end{equation}
Lemma \ref{lem:omega-limit-singleton} proves continuity of $P$, which implies the image of the (path-)connected set $G$ is (path-)connected and since $P(G) = S$, $S$ is (path-)connected.
\end{proof}

\textit{Lemma \ref{lemma:la-Salle-I.S}}

\begin{proof}
    The existence of one bounded trajectory implies by IS that all trajectories are bounded. Furthermore the positive orbit of any initial condition $x \in G$ has compact closure $\Omega_x \subseteq G$, since the trajectory is bounded, and either $G$ is closed or by assumption every positive orbit's closure is contained in $G$. The set $\Omega_x$ is compact and forward-invariant and $\dot{V} \le 0$ on $\Omega_x$. By LaSalle's Theorem applied on $\Omega_x$, $\varphi(x,t)$ converges to the largest invariant set of $\{y \in \Omega_x: \dot{V}(y) = 0\}$, and hence to the largest invariant set of $\dot{V} = 0$. If the largest invariant set of $\dot{V} = 0$ consists of equilibria of \eqref{eq:ode}, then every (nonempty) $\omega$-limit set consists of equilibria and Lemma \ref{lem:omega-limit-singleton} proves the endpoint map exists and is continuous.
\end{proof}

\end{document}